\newcommand{\dom}{{\mathrm{dom\,}}}
\newcommand{\ran}{{\mathrm{ran\,}}}
\newcommand{\tr}{{\mathrm{tr}}}
\def\RE{{\rm Re\,}}
\def\IM{{\rm Im\,}}
\newtheorem{theorem}{Theorem}[section]
\newtheorem{proposition}[theorem]{Proposition}
\newtheorem{corollary}[theorem]{Corollary}
\newtheorem{lemma}[theorem]{Lemma}
\theoremstyle{definition}
\newtheorem{definition}[theorem]{Definition}
\newtheorem{remark}[theorem]{Remark}
\def\sS{{\mathfrak S}}
      \def\dC{{\mathbb C}}
   \def\dN{{\mathbb N}}   
      \def\dR{{\mathbb R}}
      \def\cC{{\mathcal C}}
\def\cD{{\mathcal D}}      
\def\cG{{\mathcal G}}   \def\cH{{\mathcal H}}   
      \def\cL{{\mathcal L}}
\def\cM{{\mathcal M}}   \def\cN{{\mathcal N}}
\def\sp{{\text{\rm sp\,}}}
\def\cspan{{\rm \overline{span}\, }}
\begin{document}

\title[$Q$-functions and elliptic operators]
{Generalized $Q$-functions and Dirichlet-to-Neumann maps for elliptic differential operators}
\author{Daniel Alpay$^1$ and Jussi Behrndt$^2$}

\thanks{\noindent $^1$ Earl Katz Family Chair in algebraic system theory; $\quad ^2$ Jussi Behrndt gratefully acknowledges support from the Institute for Advanced Studies in Mathematics at Ben-Gurion University of the Negev}

\address{Department of Mathematics \\
Ben-Gurion University of the Negev \\
P.O. Box 653\\
84105 Beer-Sheva\\
Israel}
\email{dany@math.bgu.ac.il}

\address{Institut f\"ur Mathematik, MA 6-4 \\
Technische Universit\"at Berlin \\
Strasse des 17. Juni 136\\
10623 Berlin \\
Deutschland}
\email{behrndt@math.tu-berlin.de}

\maketitle

\begin{abstract}
The classical concept of $Q$-functions associated to symmetric and selfadjoint operators due
to M.G.~Krein and H.~Langer is extended
in such a way that the Dirichlet-to-Neumann map in the theory of elliptic differential
equations can be interpreted as a generalized $Q$-function. For couplings of uniformly elliptic second order
differential expression on bounded and unbounded domains explicit Krein type formulas  
for the difference of the resolvents and trace formulas in an $H^2$-framework are obtained.
\end{abstract}

\section{Introduction}

The notion of a $Q$-function associated to a pair $\{S,A\}$ consisting of a symmetric operator 
$S$ and a selfadjoint extension $A$ of $S$ in a Hilbert or Pontryagin space was introduced by 
M.G.~Krein and H.~Langer in \cite{KL73,KL77}. A $Q$-function contains the spectral information of the selfadjoint extensions
of the underlying symmetric operator and therefore these functions play a very
important role in the spectral and perturbation theory of selfadjoint operators. $Q$-functions appear also naturally 
in the description of the resolvents of the selfadjoint extensions of a symmetric operator with the help of Krein's
formula and they can be used to construct functional models for selfadjoint operators. In the theory of 
boundary triplets associated to symmetric operators  
$Q$-functions can be interpreted as so-called Weyl functions, cf. \cite{BGP08,DHMS06,DM91,DM95,GG91}.
A prominent example for a $Q$-function is the classical Titchmarsh-Weyl coefficient
in the theory of singular Sturm-Liouville operators.

The main objective of this paper is to extend the concept of $Q$-functions in such a way that
the Dirichlet-to-Neumann map in the theory of elliptic differential equations can be identified
as a generalized $Q$-function. In the abstract part of the paper we introduce the notion of 
generalized $Q$-functions
and we show that these functions have similar properties as classical $Q$-functions. 
Besides a symmetric operator $S$ and a selfadjoint extension $A$
also an operator $T$ whose closure coincides with $S^*$ is used. Some of the ideas here parallel \cite{BL07}, where 
a more abstract approach with isometric and unitary relations in Krein spaces was used. 
The main result in the abstract part is Theorem~\ref{qthmgen1} which states that an operator function is a
generalized $Q$-function if and only if it coincides    
up to a possibly unbounded constant on a dense subspace with the restriction of a
Nevanlinna function with an invertible imaginary part and a certain asymptotic behaviour. 

Section~\ref{ellops} and Section~\ref{cellops} deal with second order
elliptic operators on bounded and unbounded domains, and with the coupling of such operators. 
Suppose first that the domain $\Omega\subset\dR^n$, $n>1$, is
bounded with a smooth boundary $\partial\Omega$. Let $A_D$ and $A_N$ be the selfadjoint 
realizations of an formally symmetric uniformly elliptic differential expression 
\begin{equation}\label{cl1}
\cL=-\sum_{j,k=1}^n  \frac{\partial}{\partial x_j} \,a_{jk} \frac{\partial }{\partial x_k}+ a
\end{equation}
in $L^2(\Omega)$ defined on $H^2(\Omega)$ and 
subject to Dirichlet and Neumann boundary conditions, respectively. If $T$ denotes the realization of $\cL$
on $H^2(\Omega)$, then the closure of $T$ in $L^2(\Omega)$ coincides with the maximal operator associated to $\cL$ in 
$L^2(\Omega)$, and $A_D$ and $A_N$ are both selfadjoint restrictions of $T$.
For a function $f\in H^2(\Omega)$ denote the trace and the trace of the conormal derivative by $f|_{\partial\Omega}$
and $\tfrac{\partial f}{\partial\nu}|_{\partial\Omega}$, respectively. Then for each $\lambda\in\rho(A_D)$ the
Dirichlet-to-Neumann map 
\begin{equation}\label{dnmapintro}
Q(\lambda)(f_\lambda|_{\partial\Omega}):= -\frac{\partial f_\lambda}{\partial\nu}\Bigl|_{\partial\Omega},\qquad
\text{where}\quad T f_\lambda=\lambda f_\lambda,
\end{equation}
is well-defined and will be regarded as an operator in $L^2(\partial\Omega)$ defined on 
$H^{3/2}(\partial\Omega)$ with values in $H^{1/2}(\partial\Omega)$. The minus sign in \eqref{dnmapintro}
is used for technical reasons. It turns out that the operator 
function $\lambda\mapsto Q(\lambda)$ is a generalized $Q$-function in the sense of Definition~\ref{defq} 
and an explicit variant of Krein's formula for the resolvents of $A_D$ and $A_N$ is obtained in Theorem~\ref{bigthm1}, 
see also \cite{BL07,BGW09,GM08,GM08-2,P08,PR09,Post07} for more general problems. In particular, 
in the case $n=2$ the difference of these resolvents is a trace class operator and we obtain the trace formula
\begin{equation}\label{traceformi}
\tr\bigl((A_D-\lambda)^{-1}-(A_N-\lambda)^{-1}\bigr)
=\tr\left(\overline{Q(\lambda)^{-1}}\,\frac{d}{d\lambda}\,\widetilde Q(\lambda)\right)
\end{equation}
for $\lambda\in\rho(A_D)\cap\rho(A_N)$. Here $\overline{Q(\lambda)^{-1}}$ is the closure of $Q(\lambda)^{-1}$ 
in $L^2(\partial\Omega)$ and 
$\widetilde Q$ is a Nevanlinna function which differs from the Dirichlet-to-Neumann map by a symmetric
constant. Trace formulas for canonical differential expressions and in more abstract situations for 
the finite-dimensional case can be found in, e.g., \cite{AG03,AG05,BMN08}.

In Section~\ref{cellops} we consider a so-called coupling of elliptic operators. Such couplings are of great interest
in problems of mathematical physics, e.g., in the description of quantum networks; for more details and further references 
we refer the reader to the recent works \cite{EK04,EK04-2,MPP07,MPR07,P07}. 
Suppose that $\dR^n$, $n>1$, 
is decomposed in a bounded domain $\Omega$ with smooth boundary $\cC$ and the unbounded domain 
$\Omega^\prime=\dR^n\backslash\overline\Omega$. The orthogonal sum of the selfadjoint Dirichlet operators 
$A_D$ and $A_D^\prime$ associated to $\cL$ in $L^2(\Omega)$ and $L^2(\Omega^\prime)$, respectively, is regarded as a 
selfadjoint diagonal block
operator matrix in $L^2(\dR^n)$. 
The resolvent of $A_D\oplus A_D^\prime$ is then compared with the resolvent of
the usual selfadjoint realization $\widetilde A$ of $\cL$ in $L^2(\dR^n)$ defined on $H^2(\dR^n)$.
In order to express this difference in the Krein type formula 
\begin{equation}\label{kreinintro}
\bigl((A_D\oplus A_D^\prime)-\lambda\bigr)^{-1}-(\widetilde A-\lambda)^{-1}=
\Gamma(\lambda) Q(\lambda)^{-1}\Gamma(\bar\lambda)^*
\end{equation}
with a generalized $Q$-function an analogon 
of the Dirichlet-to-Neumann map is constructed which measures the jump of the conormal derivative 
of $L^2(\Omega)$ and $L^2(\Omega^\prime)$-solutions of $\cL u=\lambda u$ on the boundary $\cC$, see \eqref{qcoup}. 
The operator 
$\Gamma(\lambda):L^2(\cC)\rightarrow L^2(\dR^n)$
in \eqref{kreinintro} is closely connected with the generalized $Q$-function and is here identified
with a Poisson-type operator solving a certain Dirichlet problem.
As a consequence of the representation \eqref{kreinintro} we also obtain a trace formula of the 
type \eqref{traceformi} in the coupled case.

\section{Generalized $Q$-functions}\label{genq}

In this section we introduce the notion of generalized $Q$-functions associated to a symmetric
operators in Hilbert spaces. The class of generalized $Q$-functions is characterized in Theorem~\ref{qthmgen1}, where 
it turns out that generalized $Q$-functions are closely connected with operator-valued Nevanlinna or
Riesz-Herglotz functions. We also note in advance that for the case of finite deficiency indices of the underlying
symmetric operator the concept of generalized $Q$-functions coincides with the classical notion of
(ordinary) $Q$-functions studied by M.G.~Krein and H.~Langer in \cite{KL73,KL77}, see also \cite{K47,K49}.

Let $\cH$ be a separable Hilbert space and let $S$ be a densely defined closed symmetric operator 
with equal (in general infinite) deficiency indices 
$$n_\pm(S)=\dim\ker(S^*\mp i)\leq \infty$$ in $\cH$. 
It is well known that under this assumption $S$ admits selfadjoint extensions in $\cH$.
In the following let $A$ be a fixed selfadjoint extension of $S$ in $\cH$, so that,
$S\subset A=A^*\subset S^*$. Furthermore, let $T$ be a 
linear operator in $\cH$ 
such that $A\subset T\subset S^*$ and $\overline T=S^*$ 
holds, i.e., the domain $\dom T$ of $T$ is a core of $\dom S^*$ (see \cite{K76}), 
$\dom T$ contains $\dom A$ and $Af=Tf$ holds for all $f\in\dom A$.

For $\lambda\in\dC$ belonging to the resolvent set $\rho(A)$ of the selfadjoint operator $A$ define the defect spaces 
$\cN_\lambda(T)=\ker(T-\lambda)$ and 
$\cN_\lambda(S^*)=\ker(S^*-\lambda)$. Then the decompositions
\begin{equation}\label{decoall}
\dom S^*=\dom A\,\dot +\,\cN_\lambda(S^*)\quad\text{and}\quad
\dom T=\dom A\,\dot +\,\cN_\lambda(T)
\end{equation}
hold for all $\lambda\in\rho(A)$ and the closure $\overline{\cN_\lambda(T)}$ 
of $\cN_\lambda(T)$ in $\cH$ coincides with $\cN_\lambda(S^*)$. Recall that the symmetric operator $S$ is said to be 
{\it simple} if there exists no nontrivial subspace $\cD$ in $\dom S$ such that $S$ restricted to $\cD$
is a selfadjoint operator in the Hilbert space $\overline\cD$. It is important to note that $S$ is simple if and only if
\begin{equation}\label{cspan}
\cH=\cspan\bigl\{\cN_\lambda(S^*):\lambda\in\dC\backslash\dR\bigr\}
\end{equation}
holds, cf. \cite{K49}. Here $\cspan$ denotes the closed linear span.
As $\overline{\cN_\lambda(T)}=\cN_\lambda(S^*)$ it is clear that the right hand side in \eqref{cspan} coincides with 
\begin{equation*}
\cspan\bigl\{\cN_\lambda(T):\lambda\in\dC\backslash\dR\bigr\}.
\end{equation*}

Fix some $\lambda_0\in\rho(A)$, let $\cG$ be a Hilbert space with the same dimension as $\cN_{\lambda_0}(T)$ and let
$\Gamma_{\lambda_0}$ be a densely defined bounded operator from $\cG$ into $\cH$ such that
$\ran\Gamma_{\lambda_0}=\cN_{\lambda_0}(T)$ and $\ker\Gamma_{\lambda_0}=\{0\}$ holds. 
The domain $\dom\Gamma_{\lambda_0}$ of $\Gamma_{\lambda_0}$ will be denoted by $\cG_0$. Observe that the closure 
$\overline\Gamma_{\lambda_0}$
of the operator $\Gamma_{\lambda_0}$ is the bounded extension of $\Gamma_{\lambda_0}$ which is defined on 
$\overline\cG_0=\cG$. We write $\overline\Gamma_{\lambda_0}\in\cL(\cG,\cH)$, where $\cL(\cG,\cH)$ is the space
of bounded linear operators defined on $\cG$ with values in $\cH$.

\begin{lemma}\label{gamlem}
The operator function $\lambda\mapsto\Gamma(\lambda):=(I+(\lambda-\lambda_0)(A-\lambda)^{-1})\Gamma_{\lambda_0}$
satisfies $\Gamma(\lambda_0)=\Gamma_{\lambda_0}$,
\begin{equation*}
\Gamma(\lambda)=\bigl(I+(\lambda-\mu)(A-\lambda)^{-1}\bigr)\Gamma(\mu),\qquad \lambda,\mu\in\rho(A),
\end{equation*}
and $\Gamma(\lambda)$ is a bounded operator from $\cG$ into $\cH$ which maps $\dom \Gamma(\lambda)=\cG_0$ 
bijectively onto $\cN_\lambda(T)$ for all $\lambda\in\rho(A)$. 
Moreover, $\lambda\mapsto\Gamma(\lambda)g$ is holomorphic
on $\rho(A)$ for every $g\in\cG_0$.
\end{lemma}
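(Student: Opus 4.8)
The plan is to verify the three assertions in turn, starting from the explicit formula $\Gamma(\lambda) = \bigl(I + (\lambda-\lambda_0)(A-\lambda)^{-1}\bigr)\Gamma_{\lambda_0}$ and using only elementary resolvent calculus together with the decomposition \eqref{decoall}. First, $\Gamma(\lambda_0) = \Gamma_{\lambda_0}$ is immediate since the factor $(\lambda-\lambda_0)(A-\lambda)^{-1}$ vanishes at $\lambda = \lambda_0$. For the transformation rule, I would insert the definition of $\Gamma(\mu)$ on the right-hand side and reduce the claim to the operator identity
\begin{equation*}
I + (\lambda-\lambda_0)(A-\lambda)^{-1} = \bigl(I+(\lambda-\mu)(A-\lambda)^{-1}\bigr)\bigl(I+(\mu-\lambda_0)(A-\mu)^{-1}\bigr)
\end{equation*}
on $\cH$, which follows after expanding the product and applying the resolvent identity $(A-\lambda)^{-1}(A-\mu)^{-1} = \tfrac{1}{\lambda-\mu}\bigl((A-\lambda)^{-1}-(A-\mu)^{-1}\bigr)$; a short computation collapses the cross terms. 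Holomorphy of $\lambda\mapsto\Gamma(\lambda)g$ on $\rho(A)$ for fixed $g\in\cG_0$ is then clear because $\lambda\mapsto(A-\lambda)^{-1}$ is holomorphic in the operator norm on $\rho(A)$ and $\Gamma_{\lambda_0}g$ is a fixed vector in $\cH$.

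The substantive point is that $\Gamma(\lambda)$ maps $\cG_0$ bijectively onto $\cN_\lambda(T)$. Injectivity is easy: if $\Gamma(\lambda)g = 0$, then applying $I + (\lambda_0-\lambda)(A-\lambda_0)^{-1}$ (which is the inverse of $I+(\lambda-\lambda_0)(A-\lambda)^{-1}$, by the case $\mu=\lambda_0$ of the transformation rule just proved) gives $\Gamma_{\lambda_0}g = 0$, hence $g=0$ since $\ker\Gamma_{\lambda_0}=\{0\}$. For the range statement I would argue that $\Gamma(\lambda)$ maps $\cG_0$ \emph{into} $\cN_\lambda(T)$, and then that it is onto. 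For "into": take $g\in\cG_0$, write $f_{\lambda_0} := \Gamma_{\lambda_0}g \in \cN_{\lambda_0}(T)$, so $T f_{\lambda_0} = \lambda_0 f_{\lambda_0}$, and set $f_\lambda := \Gamma(\lambda)g = f_{\lambda_0} + (\lambda-\lambda_0)(A-\lambda)^{-1}f_{\lambda_0}$. Since $f_{\lambda_0}\in\dom A\,\dot+\,\cN_\lambda(T)$ by \eqref{decoall}, write $f_{\lambda_0} = f_A + f_{\lambda}'$ with $f_A\in\dom A$, $T f_{\lambda}' = \lambda f_{\lambda}'$; applying $(A-\lambda)^{-1}$ and using $(A-\lambda)^{-1}(A-\lambda)f_A = f_A$ together with $(A-\lambda)^{-1}f_\lambda' = (\lambda_0-\lambda)^{-1}f_\lambda'$ — the latter because $f_\lambda'\in\cN_\lambda(S^*)$ so $(A-\lambda)f$ applied... — one computes that $f_\lambda = f_\lambda' \in \cN_\lambda(T)$; more directly, the standard computation shows $(T-\lambda)f_\lambda = (T-\lambda_0)f_{\lambda_0} + \bigl((\lambda_0-\lambda)I + (\lambda-\lambda_0)(A-\lambda)(A-\lambda)^{-1}\bigr)f_{\lambda_0}$ — here one must be careful that $(A-\lambda)^{-1}f_{\lambda_0}\in\dom A\subset\dom T$ and that $T$ agrees with $A$ there — which simplifies to $0$.

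For surjectivity onto $\cN_\lambda(T)$: given $f_\lambda\in\cN_\lambda(T)$, run the transformation backwards, i.e. set $g := \Gamma_{\lambda_0}^{-1}\bigl(f_\lambda + (\lambda_0-\lambda)(A-\lambda_0)^{-1}f_\lambda\bigr)$; the vector in parentheses lies in $\cN_{\lambda_0}(T) = \ran\Gamma_{\lambda_0}$ by the "into" direction applied with the roles of $\lambda$ and $\lambda_0$ interchanged (equivalently, since $\Gamma(\lambda_0)$ applied to the corresponding element reproduces $f_\lambda$ via the transformation rule), so $g\in\cG_0$ is well-defined and $\Gamma(\lambda)g = f_\lambda$ by the transformation rule. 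The main obstacle is the bookkeeping in the "into" step: one has to keep track of which vectors lie in $\dom T$ versus merely $\dom S^*$, and exploit that $A\subset T$ so that $T$ acts as $A$ on $\dom A$ and in particular on $(A-\lambda)^{-1}f_{\lambda_0}$; once this is set up correctly, the algebraic identity $\bigl((\lambda_0-\lambda)I + (\lambda-\lambda_0)(A-\lambda)(A-\lambda)^{-1}\bigr)f_{\lambda_0} = 0$ does all the work. Boundedness of $\Gamma(\lambda)$ on $\cG_0$ (and the extension $\overline{\Gamma(\lambda)}\in\cL(\cG,\cH)$) is then automatic from the factorization, as $(A-\lambda)^{-1}\in\cL(\cH)$ and $\Gamma_{\lambda_0}$ is bounded on $\cG_0$.
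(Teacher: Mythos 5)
Your proposal is correct and follows essentially the same route as the paper: the key point is that $A\subset T$ gives $(T-\lambda)(A-\lambda)^{-1}=I$, hence $(T-\lambda)\Gamma(\lambda)h=(T-\lambda_0)\Gamma_{\lambda_0}h=0$, and surjectivity is obtained by transporting $f_\lambda\in\cN_\lambda(T)$ back to $\cN_{\lambda_0}(T)=\ran\Gamma_{\lambda_0}$ via $(I+(\lambda_0-\lambda)(A-\lambda_0)^{-1})$, exactly as in the paper. The only thing to clean up is the abandoned first attempt at the ``into'' step (the assertion $(A-\lambda)^{-1}f_\lambda'=(\lambda_0-\lambda)^{-1}f_\lambda'$ is neither correct nor needed); your subsequent direct computation of $(T-\lambda)f_\lambda$ is the argument to keep.
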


\begin{proof}
Let us show that $\ran\Gamma(\lambda)=\cN_\lambda(T)$ is true. The other assertions in the lemma are obvious or 
follow from a straightforward calculation.  Since $T$ is an extension of $A$ we have $(T-\lambda)(A-\lambda)^{-1}=I$ for $\lambda\in\rho(A)$
and therefore
\begin{equation*}
(T-\lambda)\Gamma(\lambda)h=(T-\lambda)\bigl(I+(\lambda-\lambda_0)(A-\lambda)^{-1}\bigr)\Gamma_{\lambda_0} h
=(T-\lambda_0)\Gamma_{\lambda_0} h=0
\end{equation*}
shows that $\ran\Gamma(\lambda)\subset\cN_\lambda(T)$ holds. Now let $f_\lambda\in\cN_\lambda(T)$. Then it
follows as above that
\begin{equation*}
f_{\lambda_0}:=\bigl(I+(\lambda_0-\lambda)(A-\lambda_0)^{-1}\bigr) f_\lambda
\end{equation*}
is an element in $\cN_{\lambda_0}(T)$ and hence there exists $h\in\cG_0$ such that $f_{\lambda_0}=\Gamma_{\lambda_0}h$.
Now a simple calculation shows $f_\lambda=\Gamma(\lambda)h$, thus $\ran\Gamma(\lambda)=\cN_\lambda(T)$. 
\end{proof}

In the following definition the concept of generalized $Q$-functions is introduced.

\begin{definition}\label{defq}
Let $S$, $A$, $T$, and $\Gamma(\cdot)$ be as above. An operator function $Q$ defined on $\rho(A)$ 
whose values $Q(\lambda)$ are linear operators in $\cG$ with $\dom Q(\lambda)=\cG_0$ for all $\lambda\in\rho(A)$ 
is said to be a {\em generalized $Q$-function} of the triple $\{S,A,T\}$ if 
\begin{equation}\label{q}
Q(\lambda)-Q(\mu)^*=(\lambda-\bar\mu)\Gamma(\mu)^*\Gamma(\lambda)
\end{equation}
holds for all $\lambda,\mu\in\rho(A)$. 
If, in addition, $\cG_0=\cG$ and $T=S^*$, then $Q$ is called
an {\em ordinary $Q$-function} of $\{S,A\}$.
\end{definition}

We note that the values $Q(\lambda)$, $\lambda\in\rho(A)$, of a generalized $Q$-function can 
be unbounded non-closed operators. The adjoint
$Q(\mu)^*$ in \eqref{q} is well defined since $\dom Q(\mu)$ is dense in $\cG$ and by setting $\lambda=\bar\mu$
in \eqref{q} it follows $Q(\mu)\subset Q(\bar\mu)^*$. Hence the identity \eqref{q} holds on $\cG_0$,
the operators $Q(\lambda)$ are closable in $\cG$ and 
symmetric for $\lambda\in\rho(A)\cap\dR$. 
The real and imaginary parts of the operators $Q(\lambda)$ are defined as usual:
\begin{equation*}
\RE Q(\lambda)=\frac{1}{2}\bigl(Q(\lambda)+Q(\lambda)^*\bigr)\quad\text{and}\quad
\IM Q(\lambda)=\frac{1}{2i}\bigl(Q(\lambda)-Q(\lambda)^*\bigr).
\end{equation*}
Since $(\RE Q(\lambda)h,h)$ and $(\IM Q(\lambda)h,h)$ are real for all $h\in\cG_0$ the operators 
$\RE Q(\lambda)$ and $\IM Q(\lambda)$ are symmetric.

\begin{remark}
We note that the concept of generalized $Q$-functions is closely connected with the theory of boundary triplets
and associated Weyl functions. The Weyl function of an ordinary or generalized 
boundary triplet (see \cite{BGP08,DM91,DM95,GG91}) is also a generalized $Q$-function, but the converse is not
true. 
The class of generalized $Q$-functions
studied here coincides with the class of Weyl functions of so-called quasi boundary triplets 
introduced in \cite{BL07}. Furthermore, we note that generalized $Q$-functions are no subclass of the Weyl families
associated to boundary relations, see \cite{DHMS06} and Theorem~\ref{qthmgen1}. 
\end{remark}

The concept of generalized $Q$-functions differs from the classical notion of ordinary $Q$-functions only in the
case $n_\pm(S)=\infty$.

\begin{proposition}
Let $Q$ be a generalized $Q$-function of the triple $\{S,A,T\}$ and assume, in addition, that the deficiency
indices $n_\pm(S)$ are finite. Then $T=S^*$ and $Q$ is an ordinary $Q$-function of the pair $\{S,A\}$.
\end{proposition}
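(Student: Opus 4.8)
The plan is to extract from the finiteness of the deficiency indices the single structural fact that the defect space $\cN_{\lambda_0}(S^*)$ is finite-dimensional; once this is available, both of the extra requirements in Definition~\ref{defq} that distinguish an ordinary from a generalized $Q$-function, namely $T=S^*$ and $\cG_0=\cG$, drop out of the decompositions in \eqref{decoall} with essentially no further work.

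Concretely I would proceed in the following steps. First, using von Neumann's formula $\dom S^*=\dom S\,\dot +\,\cN_i(S^*)\,\dot +\,\cN_{-i}(S^*)$, the quotient $\dom S^*/\dom S$ has dimension $n_+(S)+n_-(S)<\infty$; since $\dom S\subseteq\dom A\subseteq\dom S^*$, the further quotient $\dom S^*/\dom A$ is then also finite-dimensional, and by the first identity in \eqref{decoall} it is isomorphic (as a vector space) to $\cN_{\lambda_0}(S^*)$. Hence $\dim\cN_{\lambda_0}(S^*)<\infty$. Second, because $T\subseteq S^*$ we have $\cN_{\lambda_0}(T)=\ker(T-\lambda_0)\subseteq\ker(S^*-\lambda_0)=\cN_{\lambda_0}(S^*)$, so $\cN_{\lambda_0}(T)$ is a subspace of a finite-dimensional space and is therefore closed; as the excerpt already records $\overline{\cN_{\lambda_0}(T)}=\cN_{\lambda_0}(S^*)$, we conclude $\cN_{\lambda_0}(T)=\cN_{\lambda_0}(S^*)$. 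Substituting this into the two decompositions in \eqref{decoall} yields $\dom T=\dom A\,\dot +\,\cN_{\lambda_0}(T)=\dom A\,\dot +\,\cN_{\lambda_0}(S^*)=\dom S^*$, and since $T\subseteq S^*$ and both operators have the same domain, $T=S^*$. Third, the Hilbert space $\cG$ was chosen with $\dim\cG=\dim\cN_{\lambda_0}(T)<\infty$, while $\cG_0=\dom\Gamma_{\lambda_0}$ is dense in $\cG$; a dense subspace of a finite-dimensional space is the whole space, so $\cG_0=\cG$. Both additional conditions in Definition~\ref{defq} now hold, hence $Q$ is an ordinary $Q$-function of $\{S,A\}$.

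I do not anticipate a genuine obstacle: the only point deserving a moment's care is the finite-dimensionality of $\cN_{\lambda_0}(S^*)$ — equivalently, the fact that the passage from $\dom S$ to $\dom S^*$ is a finite-dimensional enlargement when $n_\pm(S)<\infty$, and that $\lambda_0\in\rho(A)$ cannot be an eigenvalue of $S\subseteq A$, so $\cN_{\lambda_0}(S^*)$ is identified with $\dom S^*/\dom A$ cleanly. After that, the argument is a direct reading of \eqref{decoall} together with the density and range properties of $\Gamma_{\lambda_0}$ recorded before Lemma~\ref{gamlem}.
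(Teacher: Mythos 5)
Your proof is correct, but it reaches $T=S^*$ by a different mechanism than the paper. The paper argues that $T$ is a finite-dimensional extension of the closed operator $S$ (because $\dom S\subseteq\dom T\subseteq\dom S^*$ and $\dim(\dom S^*/\dom S)=n_+(S)+n_-(S)<\infty$), invokes the standard fact that a finite-dimensional extension of a closed operator is closed, and concludes $T=\overline T=S^*$ in one line. You instead stay entirely inside the defect-space picture: you show $\dim\cN_{\lambda_0}(S^*)<\infty$ via von Neumann's formula and \eqref{decoall}, deduce $\cN_{\lambda_0}(T)=\overline{\cN_{\lambda_0}(T)}=\cN_{\lambda_0}(S^*)$ since a finite-dimensional subspace is closed, and then read off $\dom T=\dom S^*$ from the two decompositions in \eqref{decoall}. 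Your route is slightly longer but has the advantage of using only facts already recorded in the text (the decompositions \eqref{decoall} and $\overline{\cN_\lambda(T)}=\cN_\lambda(S^*)$), whereas the paper's route is shorter but leans on the closedness of finite-dimensional extensions, which it does not restate. The final step ($\dim\cG<\infty$ plus density of $\cG_0$ gives $\cG_0=\cG$) is identical in both arguments. One cosmetic remark: the identification $\cN_{\lambda_0}(S^*)\cong\dom S^*/\dom A$ is immediate from the directness of the sum in \eqref{decoall}, so your aside about $\lambda_0$ not being an eigenvalue of $S$ is not needed as a separate observation.
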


\begin{proof}
If the deficiency indices of the closed operator $S$ are finite, then
$T$ is a finite dimensional extension of $S$ and hence also $T$ is closed. Therefore 
$T=\overline T=S^*$. Moreover, in this case
also $\dim\cG=\dim\cN_{\lambda_0}(T)$ is finite and hence $\cG_0=\dom\Gamma(\lambda)=\dom Q(\lambda)=\cG$, 
$\lambda\in\dC\backslash\dR$.
\end{proof}

The representation of a generalized $Q$-function with the help of the resolvent of $A$ in the next proposition is formally
the same as for ordinary $Q$-functions, see \cite{KL73,KL77,LT77}.

\begin{proposition}\label{formq}
Let $Q$ be a generalized $Q$-function of the triple $\{S,A,T\}$ and let $\lambda_0\in\rho(A)$. Then
$Q$ can be written as the sum of the possibly unbounded operator $\RE Q(\lambda_0)$ and a
bounded holomorphic operator function,
\begin{equation}\label{qa}
Q(\lambda)=\RE Q(\lambda_0)+\Gamma_{\lambda_0}^*\bigl((\lambda-\RE\lambda_0)+(\lambda-\lambda_0)(\lambda-\bar\lambda_0)
(A-\lambda)^{-1}\bigr)\Gamma_{\lambda_0},
\end{equation}
and, in particular, any two generalized $Q$-functions of $\{S,A\}$ differ by a constant.
\end{proposition}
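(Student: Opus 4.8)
The plan is to derive \eqref{qa} directly from the defining identity \eqref{q} together with the resolvent representation of $\Gamma(\cdot)$ from Lemma~\ref{gamlem}. First I would fix $\lambda_0\in\rho(A)$ and set $\mu=\lambda_0$ in \eqref{q}, so that on $\cG_0$
\begin{equation*}
Q(\lambda)=Q(\lambda_0)^*+(\lambda-\bar\lambda_0)\Gamma_{\lambda_0}^*\Gamma(\lambda).
\end{equation*}
Next I would substitute $\Gamma(\lambda)=\bigl(I+(\lambda-\lambda_0)(A-\lambda)^{-1}\bigr)\Gamma_{\lambda_0}$ from Lemma~\ref{gamlem}, obtaining
\begin{equation*}
Q(\lambda)=Q(\lambda_0)^*+(\lambda-\bar\lambda_0)\Gamma_{\lambda_0}^*\Gamma_{\lambda_0}
+(\lambda-\bar\lambda_0)(\lambda-\lambda_0)\Gamma_{\lambda_0}^*(A-\lambda)^{-1}\Gamma_{\lambda_0}.
\end{equation*}
It then remains to rewrite the first two terms as $\RE Q(\lambda_0)+(\lambda-\RE\lambda_0)\Gamma_{\lambda_0}^*\Gamma_{\lambda_0}$.

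For that bookkeeping step I would again use \eqref{q}, now with $\lambda=\lambda_0$ and $\mu=\lambda_0$, which gives $Q(\lambda_0)-Q(\lambda_0)^*=(\lambda_0-\bar\lambda_0)\Gamma_{\lambda_0}^*\Gamma_{\lambda_0}=2i\,\IM\lambda_0\,\Gamma_{\lambda_0}^*\Gamma_{\lambda_0}$, hence $Q(\lambda_0)^*=Q(\lambda_0)-2i\,\IM\lambda_0\,\Gamma_{\lambda_0}^*\Gamma_{\lambda_0}$ and therefore $Q(\lambda_0)^*=\RE Q(\lambda_0)-i\,\IM\lambda_0\,\Gamma_{\lambda_0}^*\Gamma_{\lambda_0}$ on $\cG_0$. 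Plugging this in, the non-resolvent part of $Q(\lambda)$ becomes
\begin{equation*}
\RE Q(\lambda_0)+\bigl(-i\,\IM\lambda_0+\lambda-\bar\lambda_0\bigr)\Gamma_{\lambda_0}^*\Gamma_{\lambda_0}
=\RE Q(\lambda_0)+\bigl(\lambda-\RE\lambda_0\bigr)\Gamma_{\lambda_0}^*\Gamma_{\lambda_0},
\end{equation*}
since $-i\,\IM\lambda_0-\bar\lambda_0=-\RE\lambda_0$. Combining with the resolvent term yields exactly \eqref{qa}, noting $(\lambda-\bar\lambda_0)(\lambda-\lambda_0)=(\lambda-\lambda_0)(\lambda-\bar\lambda_0)$.

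Holomorphy of the map $\lambda\mapsto\Gamma_{\lambda_0}^*\bigl((\lambda-\RE\lambda_0)+(\lambda-\lambda_0)(\lambda-\bar\lambda_0)(A-\lambda)^{-1}\bigr)\Gamma_{\lambda_0}$ on $\rho(A)$ follows because $\lambda\mapsto(A-\lambda)^{-1}$ is holomorphic in operator norm on $\rho(A)$, $\Gamma_{\lambda_0}$ is bounded, and the polynomial prefactors are entire; boundedness of this operator function is clear since $\Gamma_{\lambda_0}\in\cL(\cG,\cH)$ after closure and $(A-\lambda)^{-1}\in\cL(\cH)$. Finally, for the last assertion, if $Q_1,Q_2$ are two generalized $Q$-functions of $\{S,A,T\}$ then subtracting the two representations \eqref{qa} shows $Q_1(\lambda)-Q_2(\lambda)=\RE Q_1(\lambda_0)-\RE Q_2(\lambda_0)$ is independent of $\lambda$, i.e.\ a (possibly unbounded, symmetric) constant operator on $\cG_0$. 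The only mild subtlety is that all these identities hold a priori only on the dense subspace $\cG_0$, so all equalities between the possibly unbounded operators $Q(\lambda)$, $Q(\lambda)^*$ and $\RE Q(\lambda_0)$ must be read as equalities of operators with domain $\cG_0$; I expect this domain bookkeeping, rather than any analytic difficulty, to be the only point requiring care.
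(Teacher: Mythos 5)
Your proposal is correct and follows essentially the same route as the paper: set $\mu=\lambda_0$ in \eqref{q}, insert the resolvent formula for $\Gamma(\lambda)$ from Lemma~\ref{gamlem}, and use $Q(\lambda_0)-Q(\lambda_0)^*=(\lambda_0-\bar\lambda_0)\Gamma_{\lambda_0}^*\Gamma_{\lambda_0}$ to rewrite the constant term as $\RE Q(\lambda_0)+(\lambda-\RE\lambda_0)\Gamma_{\lambda_0}^*\Gamma_{\lambda_0}$; your remark that everything is an identity of operators on $\cG_0$ is the same caveat the paper makes implicitly by working with a fixed $h$.
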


\begin{proof}
Let $h\in\cG$ and set $\mu=\lambda_0$ in \eqref{q}. Making use of the definition of $\Gamma(\lambda)$ in 
Lemma~\ref{gamlem} we obtain
\begin{equation*}
Q(\lambda)h=Q(\lambda_0)^*h+(\lambda-\bar\lambda_0)\Gamma_{\lambda_0}^*
\bigl(I+(\lambda-\lambda_0)(A-\lambda)^{-1}\bigr)\Gamma_{\lambda_0}h.
\end{equation*}
As $Q(\lambda_0)h-Q(\lambda_0)^*h=(\lambda_0-\bar\lambda_0)\Gamma_{\lambda_0}^*\Gamma_{\lambda_0}h$
we see that the above formula can be rewritten as
\begin{equation*}
Q(\lambda)h=Q(\lambda_0)h+(\lambda-\lambda_0)\Gamma_{\lambda_0}^*\Gamma_{\lambda_0}h+ \Gamma_{\lambda_0}^*
(\lambda-\lambda_0)(\lambda-\bar\lambda_0)(A-\lambda)^{-1}\Gamma_{\lambda_0}h.
\end{equation*}
The representation \eqref{qa} follows by inserting 
$Q(\lambda_0)h=\RE Q(\lambda_0)h+i \IM Q(\lambda_0)h$ and $\IM Q(\lambda_0)h=
\IM\lambda_0 \Gamma_{\lambda_0}^*\Gamma_{\lambda_0}h$ into this expression.
\end{proof}

Generalized $Q$-functions are closely connected with the class of Nevanlinna functions, cf. Theorem~\ref{qthmgen1} below.
Let $\cL(\cG)$ be the space of everywhere defined bounded linear operators in $\cG$.
Recall that an $\cL(\cG)$-valued operator function $\widetilde Q$ which is holomorphic on $\dC\backslash\dR$
and satisfies
\begin{equation}\label{imqpos}
\frac{\IM \widetilde Q(\lambda)}{\IM \lambda} \geq 0\qquad\text{and}\qquad \widetilde Q(\bar\lambda)=\widetilde Q(\lambda)^*
\end{equation}
for $\lambda\in\dC\backslash\dR$ is said to be an $\cL(\cG)$-valued {\it Nevanlinna function}. We note that $\widetilde Q$
is an $\cL(\cG)$-valued Nevanlinna function if and only if $\widetilde Q$ admits  an integral representation of the form
\begin{equation}\label{intrepq}
\widetilde Q(\lambda)=\alpha+\lambda \beta+\int_\dR\left(\frac{1}{t-\lambda}-\frac{t}{1+t^2}\right)d\Sigma(t),
\qquad\lambda\in\dC\backslash\dR,
\end{equation}
where $\alpha=\alpha^*\in\cL(\cG)$, $0\leq\beta=\beta^*\in\cL(\cG)$ and $t\mapsto\Sigma(t)\in\cL(\cG)$
is a selfadjoint nondecreasing $\cL(\cG)$-valued function on $\dR$ 
such that 
\begin{equation*}
\int_\dR \frac{1}{1+t^2}\,d\Sigma(t)\in\cL(\cG). 
\end{equation*}
It is well known that Nevanlinna functions can be represented with the help of selfadjoint operators or relations in Hilbert spaces
in a very similar form as in \eqref{qa}. Such operator and functional models for Nevanlinna functions can be found 
in, e.g., \cite{ABDS90,BHS08,B78,BDS93,DM95,GT00,HSW98,LT77,MM03}.

In the next theorem we characterize the class of generalized $Q$-functions. Roughly speaking, it turns out
that up to a symmetric constant a generalized $Q$-function is a restrictions of an 
$\cL(\cG)$-valued Nevanlinna function $\widetilde Q$ with invertible
imaginary part on $\dom Q(\lambda)$ and $\widetilde Q$ satisfies certain limit properties at $\infty$.

\begin{theorem}\label{qthmgen1}
Let $\cG_0$ be a dense subspace of $\cG$, $\lambda_0\in\dC\backslash\dR$, and let 
$Q$ be a function defined on $\dC\backslash\dR$ whose values $Q(\lambda)$ are linear operators in $\cG$ 
with $\dom Q(\lambda)=\cG_0$, $\lambda\in\dC\backslash\dR$. Then the following is equivalent:
\begin{enumerate}
\item [{\rm (i)}] $Q$ is a generalized
$Q$-function of a triple $\{S,A,T\}$, where $S$ is a simple symmetric operator in some separable Hilbert space $\cH$, 
$A$ is a selfadjoint extension of $S$ in $\cH$ and $A\subset T\subset S^*$ with $\overline T=S^*$;
\item [{\rm (ii)}] There exists an unique $\cL(\cG)$-valued Nevanlinna function $\widetilde Q$
with the properties {\rm ($\alpha$), ($\beta$)} and {\rm ($\gamma$)}:
\begin{enumerate}
\item [{\rm ($\alpha$)}] The relations
\begin{equation*}
Q(\lambda)h-\RE Q(\lambda_0)h=\widetilde Q(\lambda)h
\end{equation*}
and
\begin{equation*}
Q(\lambda)^*h-\RE Q(\lambda_0)h=\widetilde Q(\lambda)^*h
\end{equation*}
hold for all $h\in\cG_0$ and $\lambda\in\dC\backslash\dR$;
\item [{\rm ($\beta$)}] $\IM \widetilde Q(\lambda)h=0$ for some $h\in\cG_0$ and $\lambda\in\dC\backslash\dR$ 
implies $h=0$;
\item [{\rm ($\gamma$)}] The conditions 
\begin{equation*}\label{conds}
\lim_{\eta\rightarrow +\infty} \frac{1}{\eta}(\widetilde Q(i\eta)k,k)=0\quad \text{and}\quad
\lim_{\eta\rightarrow +\infty} \eta\,\IM (\widetilde Q(i\eta) k,k)=\infty
\end{equation*}
are valid for all $k\in\cG$, $k\not=0$.
\end{enumerate}
\end{enumerate}
\end{theorem}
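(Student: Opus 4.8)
The plan is to prove the equivalence by constructing the objects in both directions, using Proposition~\ref{formq} as the bridge between generalized $Q$-functions and Nevanlinna functions.

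\medskip

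\noindent\textbf{Direction (i)$\Rightarrow$(ii).} Suppose $Q$ is a generalized $Q$-function of $\{S,A,T\}$ with $S$ simple. First I would invoke Proposition~\ref{formq} with the base point $\lambda_0$: this yields
\begin{equation*}
Q(\lambda)h=\RE Q(\lambda_0)h+\Gamma_{\lambda_0}^*\bigl((\lambda-\RE\lambda_0)+(\lambda-\lambda_0)(\lambda-\bar\lambda_0)(A-\lambda)^{-1}\bigr)\Gamma_{\lambda_0}h
\end{equation*}
for $h\in\cG_0$. The natural candidate for $\widetilde Q$ is
\begin{equation*}
\widetilde Q(\lambda):=\overline\Gamma_{\lambda_0}^*\bigl((\lambda-\RE\lambda_0)+(\lambda-\lambda_0)(\lambda-\bar\lambda_0)(A-\lambda)^{-1}\bigr)\overline\Gamma_{\lambda_0},
\end{equation*}
where $\overline\Gamma_{\lambda_0}\in\cL(\cG,\cH)$ is the bounded closure. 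Since $\overline\Gamma_{\lambda_0}$ and $(A-\lambda)^{-1}$ are bounded and everywhere defined, $\widetilde Q(\lambda)\in\cL(\cG)$ and $\lambda\mapsto\widetilde Q(\lambda)$ is holomorphic on $\dC\backslash\dR$; the symmetry $\widetilde Q(\bar\lambda)=\widetilde Q(\lambda)^*$ follows from $(A-\lambda)^{-*}=(A-\bar\lambda)^{-1}$, and a computation of $\IM\widetilde Q(\lambda)$ using the resolvent identity gives $\IM\widetilde Q(\lambda)=(\IM\lambda)\,\overline\Gamma_{\lambda_0}^*(I+(\bar\lambda-\lambda_0)(A-\bar\lambda)^{-1})^*(\cdots)$; more cleanly, writing $\Gamma(\lambda)=(I+(\lambda-\lambda_0)(A-\lambda)^{-1})\Gamma_{\lambda_0}$ one gets $\IM\widetilde Q(\lambda)h=(\IM\lambda)\,\overline\Gamma_{\overline\lambda}^*\,\overline\Gamma_{\lambda}h$ so that $\widetilde Q$ is Nevanlinna. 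Property ($\alpha$) is then immediate (the second identity by taking adjoints and using $\dom Q(\lambda)$ dense). Property ($\beta$): if $\IM\widetilde Q(\lambda)h=0$ then $(\overline\Gamma_\lambda h,\overline\Gamma_\lambda h)=0$, so $\Gamma(\lambda)$ being injective on $\cG_0$ forces $h=0$; but here one must be slightly careful since $h\in\cG_0$ and $\overline\Gamma_\lambda$ agrees with $\Gamma(\lambda)$ on $\cG_0$, so injectivity of $\Gamma(\lambda)$ (Lemma~\ref{gamlem}) suffices. Property ($\gamma$): the first limit follows because $\tfrac1\eta\widetilde Q(i\eta)$ behaves like $\overline\Gamma_{\lambda_0}^*\overline\Gamma_{\lambda_0}+O(1/\eta)$ times bounded resolvent terms — actually one needs $\tfrac1\eta\|\eta(\eta-i\cdot)\widehat{} \,(A-i\eta)^{-1}\|\to 0$, which holds since $\|(A-i\eta)^{-1}\|\le 1/\eta$ and the numerator is $O(\eta)$, giving boundedness, and then one uses that $(A-i\eta)^{-1}\to 0$ strongly; so $\tfrac1\eta(\widetilde Q(i\eta)k,k)\to(\beta k,k)$ where $\beta$ is the linear coefficient, and here $\beta=0$ because there is no $\lambda^2$ term. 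The second limit, $\eta\,\IM(\widetilde Q(i\eta)k,k)\to\infty$ for $k\ne0$, is equivalent to $\eta^2\|\overline\Gamma_{i\eta}k\|^2\to\infty$, which by simplicity of $S$ should hold: here simplicity enters because $\cspan\{\cN_\lambda(S^*)\}=\cH$ and $\overline\Gamma_{i\eta}$ is essentially a regularized resolvent-scaled version of $\overline\Gamma_{\lambda_0}$, and $\ker\overline\Gamma_{\lambda_0}=\{0\}$. I expect this second limit to be the main obstacle; it is where one has to relate the asymptotics of $\eta^2\|\Gamma(i\eta)k\|^2$ to the non-degeneracy of $\Gamma_{\lambda_0}$ and ultimately to simplicity. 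Uniqueness of $\widetilde Q$ follows since ($\alpha$) determines $\widetilde Q$ on the dense set $\cG_0$ and $\widetilde Q$ is $\cL(\cG)$-valued, hence continuous.

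\medskip

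\noindent\textbf{Direction (ii)$\Rightarrow$(i).} Given $\widetilde Q$ with ($\alpha$)--($\gamma$), I would use an operator model for the Nevanlinna function $\widetilde Q$: by the representation theory cited after \eqref{intrepq} (e.g. \cite{LT77,DM95,MM03}), there is a Hilbert space $\cH$, a selfadjoint operator (or relation) $A$ in $\cH$, a point $\lambda_0$, and a bounded $\Gamma_{\lambda_0}^{\mathrm{model}}\in\cL(\cG,\cH)$ such that $\widetilde Q(\lambda)=\gamma+\Gamma_{\lambda_0}^{*}\big((\lambda-\RE\lambda_0)+(\lambda-\lambda_0)(\lambda-\bar\lambda_0)(A-\lambda)^{-1}\big)\Gamma_{\lambda_0}$ with $\gamma=\gamma^*$, and such that $\clospa\{(I+(\lambda-\lambda_0)(A-\lambda)^{-1})\Gamma_{\lambda_0}k:\lambda\in\dC\backslash\dR,\ k\in\cG\}=\cH$ (the minimal model). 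Then set $\Gamma_{\lambda_0}:=\Gamma_{\lambda_0}^{\mathrm{model}}\restriction\cG_0$; condition ($\beta$) guarantees $\ker\Gamma_{\lambda_0}=\{0\}$ (since $\|\Gamma_\lambda k\|^2=(\IM\lambda)^{-1}\IM\widetilde Q(\lambda)k,k)$), and condition ($\gamma$), first limit, forces $\beta=0$ in the integral representation so the model operator $A$ is a genuine operator, while the second limit of ($\gamma$) together with ($\beta$) ensures that $\ran\Gamma_{\lambda_0}$ is dense, so that defining $\cN_{\lambda_0}(T):=\ran\Gamma_{\lambda_0}$ and $\cN_{\lambda_0}(S^*):=\overline{\ran\Gamma_{\lambda_0}}$ one recovers the full defect space: this is exactly the point where ($\gamma$) is used to distinguish generalized $Q$-functions from the broader Weyl-family setting. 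One then defines $S:=A\restriction\ker\gamma_0$ for an appropriate abstract ``trace'' map, or more directly takes $S$ to be the restriction of $A$ whose defect spaces are $\cN_\lambda(S^*)$; simplicity of $S$ follows from minimality of the model via \eqref{cspan}. Finally one defines $T$ via $\dom T:=\dom A\,\dot+\,\ran\Gamma_{\lambda_0}$ (extended holomorphically as in \eqref{decoall} using $\Gamma(\lambda)$), checks $\overline T=S^*$ because $\ran\Gamma_{\lambda_0}$ is dense in $\cN_{\lambda_0}(S^*)$ and $\dom A\dot+\cN_{\lambda_0}(S^*)=\dom S^*$, and sets $Q(\lambda):=\RE Q(\lambda_0)+\widetilde Q(\lambda)\restriction\cG_0$, where $\RE Q(\lambda_0)$ is the prescribed (possibly unbounded) symmetric constant. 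A direct computation using the form of $\widetilde Q$ and Lemma~\ref{gamlem} verifies the defining identity \eqref{q}, completing the construction.

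\medskip

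\noindent The technically delicate points, which I would treat with care rather than in passing, are: (1) showing that ($\gamma$) is precisely what upgrades ``$\ran\Gamma_{\lambda_0}$ is a subspace of $\cN_{\lambda_0}(S^*)$'' to the genuine relation $\dom T=\dom A\dot+\ran\Gamma_{\lambda_0}$ with $\overline T=S^*$ — i.e. ruling out the case where the model ``absorbs'' part of the would-be defect space into the multivalued part of a relation; and (2) the second limit in ($\gamma$) in the forward direction, tying $\eta^2\|\Gamma(i\eta)k\|^2\to\infty$ to $\ker\overline\Gamma_{\lambda_0}=\{0\}$. Both of these are essentially statements that the abstract model is an honest operator with ``maximal'' defect data, and this is where the parallel with \cite{BL07} (quasi boundary triplets) should be invoked to organize the argument.
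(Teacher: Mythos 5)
Your overall architecture is the same as the paper's: for (i)$\Rightarrow$(ii) you use Proposition~\ref{formq} to realize $\widetilde Q(\lambda)$ as the bounded closure of $Q(\lambda)-\RE Q(\lambda_0)$, get the Nevanlinna property from $(\IM\widetilde Q(\lambda)k,k)=(\IM\lambda)\Vert\overline{\Gamma(\lambda)}k\Vert^2$ and ($\beta$) from $\ker\Gamma(\lambda)=\{0\}$; for (ii)$\Rightarrow$(i) you use the minimal operator model of $\widetilde Q$, restrict $\widetilde\Gamma$ to $\cG_0$, define $S$ as the restriction of $A$ with $\ran(S-\bar\lambda_0)=(\ran\Gamma_{\lambda_0})^\perp$ and $T$ on $\dom A\,\dot+\,\ran\Gamma_{\lambda_0}$, and verify \eqref{q}. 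That is exactly the paper's route (which delegates the asymptotics to \cite{LT77}).

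The genuine gap is your handling of condition ($\gamma$), which you yourself flag as ``the main obstacle'' and then propose to close from the wrong hypothesis. You claim the second limit $\eta\,\IM(\widetilde Q(i\eta)k,k)=\eta^2\Vert\overline{\Gamma(i\eta)}k\Vert^2\to\infty$ ``should hold by simplicity of $S$'' via \eqref{cspan}, and dually that in the converse direction it ``ensures $\ran\Gamma_{\lambda_0}$ is dense.'' Neither is correct: simplicity is encoded entirely in the minimality \eqref{min} of the model and contributes nothing to ($\gamma$). What the second limit actually measures is whether $S$ is \emph{densely defined}. Concretely, with $g=\overline\Gamma_{\lambda_0}k$ and $E$ the spectral measure of $A$, one has $\eta\,\overline{\Gamma(i\eta)}k=\eta\int\frac{t}{t-i\eta}\,dE(t)g+O(1)$, so $\eta^2\Vert\overline{\Gamma(i\eta)}k\Vert^2$ stays bounded precisely when $\int t^2\,d(E(t)g,g)<\infty$, i.e.\ when $g\in\dom A$; thus the condition says $\ker\overline{\Gamma}_{\lambda_0}=\{0\}$ and $\ran\overline{\Gamma}_{\lambda_0}\cap\dom A=\{0\}$, which is the Langer--Textorius criterion for the $S$ constructed by orthogonality to $\ran\Gamma_{\lambda_0}$ to be densely defined. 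Similarly, the first limit is not ``$\beta=0$ because there is no $\lambda^2$ term'': in the forward direction it rests on the cancellation $i\eta-\eta^2(A-i\eta)^{-1}=\eta\int\frac{it}{t-i\eta}\,dE(t)\to0$ strongly (valid because $A$ is a selfadjoint \emph{operator}), and in the converse direction it is exactly what excludes a multivalued part in the representing relation. The paper's remark immediately after the theorem confirms this reading: ($\gamma$) can be dropped precisely when $A$ is allowed to be a relation and $S$ to be non-densely defined, while simplicity is retained. So an attempt to derive the growth condition from \eqref{cspan} will not go through; you need dense definedness of $S$ in the forward direction, and you need ($\gamma$) to \emph{produce} dense definedness (not density of $\ran\Gamma_{\lambda_0}$ in the defect space, which is automatic from your definition of $S$) in the converse direction.
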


\begin{proof}
We start by showing that (i) implies (ii). For this, let $Q$ be a generalized $Q$-function of the triple $\{S,A,T\}$
and suppose that $S$ is simple. Let $\Gamma_{\lambda_0}$ be a bounded operator defined on $\dom Q(\lambda)=\cG_0$
such that $\ran\Gamma_{\lambda_0}=\cN_{\lambda_0}(T)$ and $\ker\Gamma_{\lambda_0}=\{0\}$. 
According to Proposition~\ref{formq} for each $\lambda\in\dC\backslash\dR$
\begin{equation*}
Q(\lambda)-\RE Q(\lambda_0)=\Gamma_{\lambda_0}^*\bigl((\lambda-\RE\lambda_0)+(\lambda-\lambda_0)(\lambda-\bar\lambda_0)
(A-\lambda)^{-1}\bigr)\Gamma_{\lambda_0}
\end{equation*}
is a bounded operator in $\cG$ defined on the dense subspace $\cG_0$ and hence admits a unique bounded extension onto $\cG$ which is given by
\begin{equation}\label{qtilde}
\widetilde Q(\lambda):=\Gamma_{\lambda_0}^*\bigl((\lambda-\RE\lambda_0)+(\lambda-\lambda_0)(\lambda-\bar\lambda_0)
(A-\lambda)^{-1}\bigr)\overline\Gamma_{\lambda_0},
\end{equation}
where $\overline\Gamma_{\lambda_0}\in\cL(\cG,\cH)$ is the closure of $\Gamma_{\lambda_0}$. Obviously we have
\begin{equation*}
Q(\lambda)h-\RE Q(\lambda_0)h=\widetilde Q(\lambda)h
\end{equation*}
for all $h\in\cG_0$ and $\lambda\in\dC\backslash\dR$, which is the first relation in ($\alpha$). 
Recall that for a generalized $Q$-function $Q(\bar\lambda)^*$
is an extension of $Q(\lambda)$. This implies $\RE Q(\lambda_0)\subset (\RE Q(\lambda_0))^*$,
\begin{equation*}
Q(\lambda)^*-\RE Q(\lambda_0)
\subset\bigl(Q(\lambda)-\RE Q(\lambda_0)\bigr)^*=\widetilde Q(\lambda)^*
\end{equation*}
and therefore also $Q(\lambda)^*h-\RE Q(\lambda_0)h=\widetilde Q(\lambda)^*h$ is true for all 
$h\in\cG_0$ and $\lambda\in\dC\backslash\dR$. Hence we have shown ($\alpha$).

Clearly $\widetilde Q$ in \eqref{qtilde} is
a holomorphic $\cL(\cG)$-valued function on $\dC\backslash\dR$. Denote by $\overline{\Gamma(\lambda)}$
the closure of $\Gamma(\lambda)=(I+(\lambda-\lambda_0)(A-\lambda)^{-1})\Gamma_{\lambda_0}$. Then
\begin{equation*}
\overline{\Gamma(\lambda)}=\bigl(I+(\lambda-\lambda_0)(A-\lambda)^{-1}\bigr)\overline \Gamma_{\lambda_0},
\qquad\lambda\in\dC\backslash\dR,
\end{equation*}
and it is not difficult to see that \eqref{q} extends to
\begin{equation*}
\widetilde Q(\lambda)-\widetilde Q(\mu)^*=(\lambda-\bar\mu)\Gamma(\mu)^*\overline{\Gamma(\lambda)}.
\end{equation*}
Hence
\begin{equation*}
\bigl(\IM\widetilde Q(\lambda)k,k\bigr)=
(\IM \lambda) \bigl(\Gamma(\lambda)^*\overline{\Gamma(\lambda)}k,k\bigr)
= (\IM \lambda) \Vert \overline{\Gamma(\lambda)}k\Vert^2
\end{equation*}
holds for all $k\in\cG$  and this implies that $\widetilde Q$ is a Nevanlinna function, cf. \eqref{imqpos}. 
Furthermore, for $h\in\cG_0$ we have
\begin{equation*}
\IM\widetilde Q(\lambda) h=(\IM\lambda) \Gamma(\lambda)^*\Gamma(\lambda) h
\end{equation*}
and from the property $\ker\Gamma(\lambda)=\{0\}$, cf. Lemma~\ref{gamlem}, we conclude that 
$\IM\widetilde Q(\lambda) h=0$ for $h\in\cG_0$ implies $h=0$, i.e., condition ($\beta$) holds. 
The same arguments as in \cite[Theorem 2.4, Corollaries 2.5 and 2.6]{LT77} together with
the assumption that $S$ is a densely defined closed simple symmetric operator show that $\widetilde Q$ satisfies 
the conditions in ($\gamma$).

\vskip 0.3cm\noindent
Let us now verify the converse direction. If $\widetilde Q$ is a $\cL(\cG)$-valued Nevanlinna function, 
$\lambda_0\in\dC\backslash\dR$
and the first condition in  ($\gamma$) holds,
then it is well known that there exists a Hilbert space $\cH$, a selfadjoint operator $A$ in $\cH$
and a mapping $\widetilde\Gamma\in\cL(\cG,\cH)$ such that the representation
\begin{equation}\label{qtilderep}
\widetilde Q(\lambda)=\RE\widetilde Q(\lambda_0)+\widetilde\Gamma^*\bigl((\lambda-\RE\lambda_0)+
(\lambda-\lambda_0)(\lambda-\overline\lambda_0)(A-\lambda)^{-1}\bigr)\widetilde\Gamma
\end{equation}
is valid for all $\lambda\in\dC\backslash\dR$, see, e.g., 
\cite{HSW98,LT77}. Furthermore,
the space $\cH$ can be chosen minimal, i.e., 
\begin{equation}\label{min}
\cH=\cspan\bigl\{\bigl(I+(\lambda-\lambda_0)(A-\lambda)^{-1}\bigr)\widetilde\Gamma k:
k\in\cG,\,\lambda\in\dC\backslash\dR\bigr\}.
\end{equation}
We define the mapping $\Gamma_{\lambda_0}$ to be the restriction of $\widetilde\Gamma$ onto $\cG_0$. 
As $\widetilde\Gamma$ is bounded the closure $\overline\Gamma_{\lambda_0}$ of $\Gamma_{\lambda_0}$ coincides with $\widetilde\Gamma$.
We claim that $\Gamma_{\lambda_0}$ is injective. In fact, if $\Gamma_{\lambda_0}h=0$ for some $h\in\cG_0$ then 
$\widetilde\Gamma h=0$ and by \eqref{qtilderep}
we have $\widetilde Q(\lambda) h=\RE\widetilde Q(\lambda_0)h$. Therefore $\IM\widetilde Q(\lambda)h=0$
and by assumption ($\beta$) this implies $h=0$.

Define the operator $S$ by
\begin{equation*}
Sf=Af,\quad \dom S=\bigl\{f\in\dom A: ((A-\bar\lambda_0)f,\Gamma_{\lambda_0}h)=0\,\,\text{for all}\,\, h\in\cG_0\bigr\}.
\end{equation*}
Then $S$ is a closed symmetric operator and  the identities
$\ran(S-\bar\lambda_0)=(\ran \Gamma_{\lambda_0})^\bot$ and $\ker(S^*-\lambda_0)=\overline{\ran\Gamma_{\lambda_0}}$ hold. 
Let 
\begin{equation}\label{gamlam}
\Gamma(\lambda)=(I+(\lambda-\lambda_0)(A-\lambda)^{-1})\Gamma_{\lambda_0},\qquad \lambda\in\dC\backslash\dR.
\end{equation} 
It is not difficult to check that
$\ran(S-\bar\lambda)=(\ran\Gamma(\lambda))^\bot$ is true for all $\lambda\in\dC\backslash\dR$ and the conditions 
in ($\gamma$) together with \eqref{min} now yield in the same way as in 
\cite[Theorem 2.4, Corollaries 2.5 and 2.6]{LT77} that $S$ is densely defined and simple.

Note that $\dom A\cap\ran \Gamma_{\lambda_0}=\{0\}$ since $\lambda_0\in\rho(A)$ and 
$\ran\Gamma_{\lambda_0}\subset\cN_{\lambda_0}(S^*)$.
Let us define a linear operator $T$ in $\cH$ on $\dom T:=\dom A\,\dot+\,\ran\Gamma_{\lambda_0}$ by
\begin{equation*}
T(f+f_{\lambda_0}):=Af + \lambda_0f_{\lambda_0},\qquad f\in\dom A,\,\, f_{\lambda_0}\in\ran\Gamma_{\lambda_0}.
\end{equation*}
Obviously $T$ is an extension of $A$ and since $\cN_{\lambda_0}(T)=\ran\Gamma_{\lambda_0}$ and $\ran\Gamma_{\lambda_0}$
is dense in $\cN_{\lambda_0}(S^*)$ we obtain from $\dom S^*=\dom A\,\dot+\,\cN_{\lambda_0}(S^*)$, cf. \eqref{decoall},
that $T\subset S^*$ and $\overline T=S^*$ holds. 

According to condition ($\alpha$) the Nevanlinna function $\widetilde Q$ and the function $Q$ are related by
\begin{equation*}
Q(\lambda)h=\widetilde Q(\lambda)h+\RE Q(\lambda_0)h\quad\text{and}\quad
Q(\lambda)^*h=\widetilde Q(\lambda)^*h+\RE Q(\lambda_0)h
\end{equation*}
for all $h\in\cG_0$ and $\lambda\in\dC\backslash\dR$. It remains to show that
$Q$ satisfies \eqref{q}. Observe first that for $\lambda,\mu\in\dC\backslash\dR$ we have
\begin{equation}\label{qcheck}
Q(\lambda)h-Q(\mu)^*h=\widetilde Q(\lambda)h-\widetilde Q(\mu)^*h.
\end{equation}
Denote the closures of the operators $\Gamma(\lambda)$, $\lambda\in\dC\backslash\dR$, 
in \eqref{gamlam} by $\widetilde\Gamma(\lambda)$.
Then 
\begin{equation*}
\widetilde\Gamma(\lambda)=\overline{\Gamma(\lambda)}=\bigl(I+(\lambda-\lambda_0)(A-\lambda)^{-1}\bigr)
\overline \Gamma_{\lambda_0}
=\bigl(I+(\lambda-\lambda_0)(A-\lambda)^{-1}\bigr)\widetilde\Gamma
\end{equation*}
and it follows from \eqref{qtilderep} with a straightforward calculation that
\begin{equation}\label{qtildegam}
 \widetilde Q(\lambda)-\widetilde Q(\mu)^*=(\lambda-\bar\mu)\widetilde\Gamma(\mu)^*\widetilde\Gamma(\lambda),
\qquad\lambda,\mu\in\dC\backslash\dR,
\end{equation}
holds. As $\widetilde\Gamma(\mu)^*=\overline{\Gamma(\mu)}^{\,*}=\Gamma(\mu)^*$ we conclude 
\begin{equation*}
Q(\lambda)h-Q(\mu)^*h=(\lambda-\bar\mu)\Gamma(\mu)^*\Gamma(\lambda)h,\qquad h\in\cG_0,
\end{equation*}
from \eqref{qcheck}. Therefore $Q$ is a generalized $Q$-function of the triple $\{S,A,T\}$.
\end{proof}

\begin{remark}
The definition of a generalized $Q$-function can be extended to the case that $A$ is a selfadjoint relation, $S$ is a
non-densely defined symmetric operator or relation and $T$ is a linear relation which is dense in the relation $S^*$.
We refer to \cite{LT77} for ordinary $Q$-functions in this more general situation.
In this case the condition {\rm ($\gamma$)} in Theorem~\ref{qthmgen1} can be dropped. 
\end{remark}

For ordinary $Q$-functions
Theorem~\ref{qthmgen1} reads as follows, cf. \cite[Theorem 2.2 and Theorem 2.4]{LT77}.

\begin{theorem}\label{qthmord}
A $\cL(\cG)$-valued Nevanlinna function $\widetilde Q$ is an ordinary $Q$-function of some pair $\{S,A\}$, where $S$ is a densely defined
closed simple symmetric operator in some Hilbert space $\cH$ and $A$ is a selfadjoint extension of $S$ in $\cH$, if and only if 
condition ($\gamma$) in Theorem~\ref{qthmgen1} and $0\in\rho(\IM \widetilde Q(\lambda))$ holds for some, and hence for all, $\lambda\in\dC\backslash\dR$.
\end{theorem}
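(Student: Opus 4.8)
The plan is to specialize Theorem~\ref{qthmgen1} to the ordinary case $\cG_0=\cG$, $T=S^*$ and then check that the extra hypothesis $0\in\rho(\IM\widetilde Q(\lambda))$ is exactly what upgrades condition ($\beta$) to the statement that the associated $S$ is densely defined in $\cH$, while at the same time being automatically satisfied whenever $\widetilde Q$ itself \emph{is} an ordinary $Q$-function. So I would argue in two directions.

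For the forward direction, suppose $\widetilde Q$ is an ordinary $Q$-function of $\{S,A\}$ with $S$ densely defined, closed, simple. By Definition~\ref{defq} with $\cG_0=\cG$ and $T=S^*$ we have $\widetilde Q(\lambda)-\widetilde Q(\mu)^*=(\lambda-\bar\mu)\Gamma(\mu)^*\Gamma(\lambda)$ with $\Gamma(\lambda)\in\cL(\cG,\cH)$ a bijection of $\cG$ onto $\cN_\lambda(S^*)$; hence $\ker\Gamma(\lambda)=\{0\}$ and, taking $\mu=\lambda$, $\IM\widetilde Q(\lambda)=(\IM\lambda)\Gamma(\lambda)^*\Gamma(\lambda)$. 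Since $\Gamma(\lambda)$ is a bounded bijection onto the closed subspace $\cN_\lambda(S^*)$, it has a bounded inverse by the open mapping theorem, so $\Gamma(\lambda)^*\Gamma(\lambda)$ is boundedly invertible in $\cG$, i.e. $0\in\rho(\IM\widetilde Q(\lambda))$. Condition ($\gamma$) then follows exactly as in the proof of Theorem~\ref{qthmgen1} (the ``(i)$\Rightarrow$(ii)'' half), using that $S$ is densely defined, closed and simple, together with \cite[Theorem 2.4, Corollaries 2.5 and 2.6]{LT77}.

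For the converse, assume $\widetilde Q$ is an $\cL(\cG)$-valued Nevanlinna function satisfying ($\gamma$) and $0\in\rho(\IM\widetilde Q(\lambda))$ for some $\lambda\in\dC\backslash\dR$. I would run the ``(ii)$\Rightarrow$(i)'' construction from the proof of Theorem~\ref{qthmgen1} verbatim with $\cG_0=\cG$: the first condition in ($\gamma$) yields a minimal model $(\cH,A,\widetilde\Gamma)$ as in \eqref{qtilderep}, \eqref{min}; since $\cG_0=\cG$ we have $\Gamma_{\lambda_0}=\widetilde\Gamma\in\cL(\cG,\cH)$ and $\Gamma(\lambda)=(I+(\lambda-\lambda_0)(A-\lambda)^{-1})\widetilde\Gamma$ is everywhere defined. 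The hypothesis $0\in\rho(\IM\widetilde Q(\lambda))$ together with $\IM\widetilde Q(\lambda)=(\IM\lambda)\Gamma(\lambda)^*\Gamma(\lambda)$ forces $\Gamma(\lambda)$ to be injective with closed range, and in fact $\ran\Gamma(\lambda)=\cN_\lambda(S^*)$ so that $\Gamma(\lambda)$ is a bounded bijection onto $\cN_\lambda(S^*)$; this is precisely condition ($\beta$) in the sharpened form needed, and the argument in the proof of Theorem~\ref{qthmgen1} then shows $S$ is densely defined, closed and simple. Finally, since now $T=S^*$ (there is no proper intermediate operator to introduce) and $\dom Q(\lambda)=\cG$, Definition~\ref{defq} is satisfied in its ``ordinary'' form, so $\widetilde Q$ is an ordinary $Q$-function of $\{S,A\}$.

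The one point that needs genuine care, rather than quotation, is the equivalence between ``$0\in\rho(\IM\widetilde Q(\lambda))$ for some $\lambda$'' and ``for all $\lambda\in\dC\backslash\dR$'', and the fact that this invertibility is what guarantees $S$ is \emph{densely defined} (in the generalized setting without it one only gets a symmetric operator that may fail to be densely defined, or one gets a proper core $T\subsetneq S^*$). Concretely: from $\widetilde Q(\lambda)-\widetilde Q(\mu)^*=(\lambda-\bar\mu)\Gamma(\mu)^*\Gamma(\lambda)$ and $\dom A\cap\ran\Gamma_{\lambda_0}=\{0\}$ one gets $\ran(S-\bar\lambda)=(\ran\Gamma(\lambda))^\perp$, so $S$ is densely defined iff $\ran\Gamma(\lambda)$ is dense, and being also closed (as the range of a bounded operator with bounded inverse on its range) this means $\ran\Gamma(\lambda)=\cN_\lambda(S^*)$; transporting the invertibility of $\Gamma(\lambda)^*\Gamma(\lambda)$ from one $\lambda$ to another via the resolvent identity $\Gamma(\lambda)=(I+(\lambda-\mu)(A-\mu)^{-1})\Gamma(\mu)$ (which is a bounded bijection composed with $\Gamma(\mu)$) gives the ``some $\Leftrightarrow$ all'' statement. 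Everything else is a direct restriction of Theorem~\ref{qthmgen1} together with the cited results of Langer--Textorius.
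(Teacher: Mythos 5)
The paper does not actually prove Theorem~\ref{qthmord}: it states it as the ordinary-case reformulation of Theorem~\ref{qthmgen1} and refers directly to Langer--Textorius \cite[Theorems 2.2 and 2.4]{LT77}. Your proposal instead derives it from Theorem~\ref{qthmgen1} itself, and this works: the key observation, that $\IM\widetilde Q(\lambda)=(\IM\lambda)\Gamma(\lambda)^*\Gamma(\lambda)$ is boundedly invertible exactly when $\Gamma(\lambda)$ is bounded below, i.e.\ injective with closed range, is precisely what upgrades condition ($\beta$) (mere injectivity, which only yields $\ran\Gamma_{\lambda_0}$ dense in $\cN_{\lambda_0}(S^*)$ and hence a proper $T$ with $\overline T=S^*$) to $\ran\Gamma_{\lambda_0}=\cN_{\lambda_0}(S^*)$ and hence $T=S^*$, $\cG_0=\cG$. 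The forward direction via the open mapping theorem and the transport of invertibility between spectral parameters via the boundedly invertible factor relating $\Gamma(\lambda)$ and $\Gamma(\mu)$ are both sound (note the factor is $I+(\lambda-\mu)(A-\lambda)^{-1}$, not $I+(\lambda-\mu)(A-\mu)^{-1}$ as you wrote, a harmless slip). What your route buys is a self-contained deduction within the paper's framework rather than an appeal to the literature; what it costs is that the genuinely analytic content (density of $\dom S$ and simplicity from condition ($\gamma$) and minimality) is still outsourced to \cite{LT77}, exactly as in the paper's proof of Theorem~\ref{qthmgen1}. One caveat on your final paragraph: the identity $\ran(S-\bar\lambda)=(\ran\Gamma(\lambda))^\perp$ does \emph{not} give ``$S$ densely defined iff $\ran\Gamma(\lambda)$ is dense''; it identifies $\overline{\ran\Gamma(\lambda)}$ with the deficiency space $\cN_\lambda(S^*)$, while density of $\dom S$ comes from the second limit condition in ($\gamma$). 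Since your main argument correctly attributes density and simplicity to ($\gamma$), this misstatement does not damage the proof, but the role you assign to the invertibility hypothesis should be ``it forces $T=S^*$ and $\cG_0=\cG$'', not ``it forces $S$ to be densely defined''.
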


\begin{corollary}\label{derivcor}
Let $Q$ be a generalized $Q$-function of $\{S,A,T\}$ and let $\widetilde Q$ be the 
$\cL(\cG)$-valued Nevanlinna function in Theorem~\ref{qthmgen1}.
Then for all $\lambda\in\dC\backslash\dR$ and  $h\in\cG_0$ we have
\begin{equation*}
 \frac{d}{d\lambda}\,Q(\lambda) h=\frac{d}{d\lambda}\,\widetilde Q(\lambda)h=\Gamma(\bar\lambda)^*\Gamma(\lambda)h.
\end{equation*}
\end{corollary}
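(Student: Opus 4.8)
The plan is to differentiate the representation formula for $\widetilde Q$ given in \eqref{qtilderep} (equivalently \eqref{qtilde}) with respect to $\lambda$, and to simplify the resulting expression using the resolvent identity together with the relation \eqref{qtildegam}. Since $Q(\lambda)h-\RE Q(\lambda_0)h=\widetilde Q(\lambda)h$ for $h\in\cG_0$ by condition ($\alpha$) of Theorem~\ref{qthmgen1} and $\RE Q(\lambda_0)$ is a constant operator, it is immediate that $\tfrac{d}{d\lambda}Q(\lambda)h=\tfrac{d}{d\lambda}\widetilde Q(\lambda)h$, so the only substantive task is to identify $\tfrac{d}{d\lambda}\widetilde Q(\lambda)h$ with $\Gamma(\bar\lambda)^*\Gamma(\lambda)h$.

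First I would use the difference quotient characterization of the derivative. By \eqref{qtildegam} (or directly \eqref{q} extended as in the proof of Theorem~\ref{qthmgen1}), for $\lambda,\mu\in\dC\backslash\dR$ we have
\begin{equation*}
\widetilde Q(\lambda)-\widetilde Q(\mu)^*=(\lambda-\bar\mu)\,\widetilde\Gamma(\mu)^*\widetilde\Gamma(\lambda).
\end{equation*}
Setting $\mu=\bar\lambda$ gives $\widetilde Q(\lambda)-\widetilde Q(\bar\lambda)^*=(\lambda-\lambda)\widetilde\Gamma(\bar\lambda)^*\widetilde\Gamma(\lambda)=0$, which only re-expresses $\widetilde Q(\bar\lambda)=\widetilde Q(\lambda)^*$ and is not yet the derivative. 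So instead I would write, for $\mu$ near $\lambda$ (both in the same half-plane, say),
\begin{equation*}
\frac{\widetilde Q(\lambda)-\widetilde Q(\mu)}{\lambda-\mu}
=\frac{\widetilde Q(\lambda)-\widetilde Q(\mu)^*}{\lambda-\mu}+\frac{\widetilde Q(\mu)^*-\widetilde Q(\mu)}{\lambda-\mu}.
\end{equation*}
The second summand is problematic because $\widetilde Q(\mu)^*-\widetilde Q(\mu)=-2i\,\IM\widetilde Q(\mu)$ does not vanish, so this naive splitting does not converge termwise. The cleaner route is to differentiate the explicit resolvent formula \eqref{qtilde} directly: using $\tfrac{d}{d\lambda}(A-\lambda)^{-1}=(A-\lambda)^{-2}$, differentiate
\begin{equation*}
\widetilde Q(\lambda)=\Gamma_{\lambda_0}^*\bigl((\lambda-\RE\lambda_0)+(\lambda-\lambda_0)(\lambda-\bar\lambda_0)(A-\lambda)^{-1}\bigr)\overline\Gamma_{\lambda_0}
\end{equation*}
to obtain
\begin{equation*}
\frac{d}{d\lambda}\widetilde Q(\lambda)
=\Gamma_{\lambda_0}^*\bigl(I+\bigl((\lambda-\lambda_0)+(\lambda-\bar\lambda_0)\bigr)(A-\lambda)^{-1}+(\lambda-\lambda_0)(\lambda-\bar\lambda_0)(A-\lambda)^{-2}\bigr)\overline\Gamma_{\lambda_0}.
\end{equation*}
On the other hand, $\Gamma(\bar\lambda)^*\Gamma(\lambda)h$ with $\Gamma(\lambda)=(I+(\lambda-\lambda_0)(A-\lambda)^{-1})\Gamma_{\lambda_0}$ equals
\begin{equation*}
\Gamma_{\lambda_0}^*\bigl(I+(\bar\lambda-\lambda_0)(A-\bar\lambda)^{-1}\bigr)^*\bigl(I+(\lambda-\lambda_0)(A-\lambda)^{-1}\bigr)\Gamma_{\lambda_0}h
=\Gamma_{\lambda_0}^*\bigl(I+(\lambda-\bar\lambda_0)(A-\lambda)^{-1}\bigr)\bigl(I+(\lambda-\lambda_0)(A-\lambda)^{-1}\bigr)\Gamma_{\lambda_0}h,
\end{equation*}
using $(A-\bar\lambda)^{-1*}=(A-\lambda)^{-1}$ and selfadjointness of $A$. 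Expanding the product and applying the resolvent identity $(A-\lambda)^{-1}(A-\lambda)^{-1}=(A-\lambda)^{-2}$ yields exactly the same expression as the derivative computed above. Thus $\tfrac{d}{d\lambda}\widetilde Q(\lambda)h=\Gamma(\bar\lambda)^*\Gamma(\lambda)h$ for all $h\in\cG_0$ (and in fact for all $h\in\cG$).

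The main obstacle, and the only place requiring care, is the bookkeeping in the algebraic expansion: one must verify that the cross terms $(\lambda-\bar\lambda_0)(A-\lambda)^{-1}+(\lambda-\lambda_0)(A-\lambda)^{-1}$ coming from $\Gamma(\bar\lambda)^*\Gamma(\lambda)$ match the coefficient $(\lambda-\lambda_0)+(\lambda-\bar\lambda_0)$ produced by the product rule, and that the single $(A-\lambda)^{-2}$ term matches. This is routine once one writes both sides out, and the domain issue is handled automatically since $\Gamma_{\lambda_0}$ is defined on $\cG_0$, $\overline\Gamma_{\lambda_0}\in\cL(\cG,\cH)$, and the resolvent of $A$ maps $\cH$ into $\dom A\subset\cH$, so every operator appearing is applied where it is defined. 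Finally, the equality $\tfrac{d}{d\lambda}Q(\lambda)h=\tfrac{d}{d\lambda}\widetilde Q(\lambda)h$ is immediate from condition ($\alpha$) of Theorem~\ref{qthmgen1}, completing the proof.
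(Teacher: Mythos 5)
Your proof is correct, but it takes a genuinely different route from the paper's. The paper proves the identity by the difference-quotient trick that you set up and then abandoned: from \eqref{qtildegam} one has
\begin{equation*}
\frac{\widetilde Q(\lambda)-\widetilde Q(\mu)^*}{\lambda-\bar\mu}=\widetilde\Gamma(\mu)^*\widetilde\Gamma(\lambda),
\end{equation*}
and the point you missed is that the limit is taken as $\bar\mu\rightarrow\lambda$, i.e.\ $\mu\rightarrow\bar\lambda$ (so $\mu$ lives in the opposite half-plane), not as $\mu\rightarrow\lambda$. Since $\widetilde Q(\mu)^*=\widetilde Q(\bar\mu)$ for a Nevanlinna function, the left-hand side is exactly the difference quotient of the holomorphic function $\widetilde Q$ at $\lambda$, so it converges to $\frac{d}{d\lambda}\widetilde Q(\lambda)$, while the right-hand side converges to $\widetilde\Gamma(\bar\lambda)^*\widetilde\Gamma(\lambda)$; there is no problematic $\IM\widetilde Q(\mu)$ term because one never splits off $\widetilde Q(\mu)^*-\widetilde Q(\mu)$. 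Your alternative — differentiating the resolvent representation \eqref{qtilde} term by term and matching it against the expansion of $\Gamma(\bar\lambda)^*\Gamma(\lambda)=\Gamma_{\lambda_0}^*\bigl(I+(\lambda-\bar\lambda_0)(A-\lambda)^{-1}\bigr)\bigl(I+(\lambda-\lambda_0)(A-\lambda)^{-1}\bigr)\Gamma_{\lambda_0}$ — is a valid and entirely self-contained computation; the algebra checks out, the adjoint identity $(D\Gamma_{\lambda_0})^*=\Gamma_{\lambda_0}^*D^*$ for bounded everywhere-defined $D$ is used correctly, and the reduction of $\frac{d}{d\lambda}Q(\lambda)h$ to $\frac{d}{d\lambda}\widetilde Q(\lambda)h$ via condition ($\alpha$) is immediate. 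What the paper's approach buys is brevity and independence from the explicit operator representation; what yours buys is that it avoids any limit argument and makes the cancellation completely explicit. One cosmetic caveat: your parenthetical ``and in fact for all $h\in\cG$'' should be phrased with $\overline{\Gamma(\lambda)}$ in place of $\Gamma(\lambda)$, since $\dom\Gamma(\lambda)=\cG_0$.
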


\begin{proof}
It follows from \eqref{qtildegam} that
\begin{equation*}
\frac{d}{d\lambda}\,\widetilde Q(\lambda)=
\lim_{\bar\mu\rightarrow\lambda}\,\frac{\widetilde Q(\lambda)-\widetilde Q(\mu)^*}{\lambda-\bar\mu}
=\widetilde\Gamma(\bar\lambda)^*\widetilde\Gamma(\lambda)
\end{equation*}
holds. Hence condition ($\alpha$) in Theorem~\ref{qthmgen1} 
and $\widetilde\Gamma(\lambda)=\overline{\Gamma(\lambda)}$ imply
\begin{equation*}
\frac{d}{d\lambda}\,Q(\lambda)h
=\lim_{\bar\mu\rightarrow\lambda}\,\frac{Q(\lambda)h- Q(\mu)^*h}{\lambda-\bar\mu}
=\lim_{\bar\mu\rightarrow\lambda}\,\frac{\widetilde Q(\lambda)h-\widetilde Q(\mu)^*h}{\lambda-\bar\mu}
=\Gamma(\bar\lambda)^*\Gamma(\lambda)h
\end{equation*}
for $h\in\cG_0$.
\end{proof}

\section{Elliptic operators and the Dirichlet-to-Neumann map}\label{ellops}

Let $\Omega\subset\dR^n$ be a bounded or unbounded domain with compact $C^\infty$-boundary $\partial\Omega$. 
Let $\cL$ be the "formally selfadjoint" uniformly elliptic second order differential expression
\begin{equation}\label{cl}
(\cL f)(x):=-\sum_{j,k=1}^n \left( \frac{\partial}{\partial x_j} a_{jk} \frac{\partial f}{\partial x_k}\right)(x)+ a(x)f(x),
\end{equation}
$x\in\Omega$,
with bounded infinitely differentiable coefficients $a_{jk}\in C^\infty(\overline\Omega)$ satisfying $a_{jk}(x)=\overline{a_{kj}(x)}$ 
for all $x\in\overline\Omega$ and $j,k=1,\dots,n$, the function 
$a\in L^\infty(\Omega)$ is real valued and 
\begin{equation}\label{elliptic}
\sum_{j,k=1}^n a_{jk}(x)\xi_j\xi_k\geq C\sum_{k=1}^n\xi_k^2
\end{equation}
holds for some $C>0$, all $\xi=(\xi_1,\dots,\xi_n)^\top\in\dR^n$ and $x\in\overline\Omega$. We note that the assumptions on the 
domain $\Omega$ and the coefficients of $\cL$ can be relaxed but it is not our aim to treat the most general setting 
here. We refer the reader to e.g. \cite{G85,LM72,M,W} for possible generalizations.

In the following we consider the selfadjoint realizations of $\cL$ in $L^2(\Omega)$ subject to Dirichlet
and Neumann boundary conditions. For a function $f$ in the Sobolev space 
$H^2(\Omega)$ we denote the trace by 
$f\vert_{\partial\Omega}$ and the trace of the conormal derivative is defined by
\begin{equation*}
\frac{\partial f}{\partial\nu}\Bigl|_{\partial\Omega}:=\sum_{j,k=1}^n a_{jk} n_j \frac{\partial f}{\partial x_k}
\Bigl|_{\partial\Omega};
\end{equation*}
here $n(x)=(n_1(x),\dots, n_n(x))^\top$ is the unit vector at the point $x\in\partial\Omega$ pointing out of $\Omega$.
Recall that the mapping $C^\infty(\overline\Omega)\ni f\mapsto\bigl\{f|_{\partial\Omega},
\tfrac{\partial f}{\partial\nu}\bigl|_{\partial\Omega}\bigr\}$
extends by continuity to a continuous surjective mapping
\begin{equation}\label{tracemap}
 H^2(\Omega)\ni f\mapsto \left\{f|_{\partial\Omega},\frac{\partial f}{\partial\nu}\Bigl|_{\partial\Omega}\right\}
\in H^{3/2}(\partial\Omega)\times H^{1/2}(\partial\Omega).
\end{equation}
The kernel of this map is
\begin{equation*}
H^2_0(\Omega)=\left\{f\in H^2(\Omega): f\vert_{\partial\Omega}=\frac{\partial f}{\partial\nu}
\Bigl|_{\partial\Omega}=0\right\}
\end{equation*} 
which coincides with the closure of 
$C_0^\infty(\Omega)$ in $H^2(\Omega)$. We refer the reader to the monographs \cite{LM72,M,W} for 
more details. In the following the scalar products in $L^2(\Omega)$
and $L^2(\partial\Omega)$ are denoted by $(\cdot,\cdot)_\Omega$ and $(\cdot,\cdot)_{\partial\Omega}$, respectively.
Then Green`s identity
\begin{equation}\label{greenid}
(\cL f,g)_\Omega-(f,\cL g)_\Omega=
\left(f\vert_{\partial\Omega},\frac{\partial g}{\partial\nu}\Bigl|_{\partial\Omega}\right)_{\partial\Omega}
-\left(\frac{\partial f}{\partial\nu}\Bigl|_{\partial\Omega},g\vert_{\partial\Omega}
\right)_{\partial\Omega}
\end{equation}
holds for all functions $f,g\in H^2(\Omega)$. We note that \eqref{greenid} is even true for $f\in H^2(\Omega)$ and 
$g$ belonging to the domain of the maximal operator associated to $\cL$ in $L^2(\Omega)$ if the 
$(\cdot,\cdot)_{\partial\Omega}$ scalar product in $L^2(\partial\Omega)$ is extended by continuity to 
$H^{3/2}(\partial\Omega)\times H^{-3/2}(\partial\Omega)$ and
$H^{1/2}(\partial\Omega)\times H^{-1/2}(\partial\Omega)$,
respectively, see \cite{LM72,W}. However, we shall make use of \eqref{greenid} only for the case $f,g\in H^2(\Omega)$.

It is well known that the realizations $A_D$ and $A_N$ of $\cL$ subject to Dirichlet and Neumann boundary conditions 
defined by
\begin{equation}\label{adan}
\begin{split}
A_D f&=\cL f,\quad \dom A_D=\bigl\{f\in H^2(\Omega): f\vert_{\partial\Omega}=0\bigr\},\\
A_N f&=\cL f,\quad \dom A_N=\Bigl\{f\in H^2(\Omega): \frac{\partial f}{\partial\nu}\Bigl|_{\partial\Omega}=0\Bigr\},
\end{split}
\end{equation}
are selfadjoint operators in $L^2(\Omega)$. The following statement is known and can be found in, e.g., \cite{LM72}. 
It can be proved with similar
methods as Theorem~\ref{opscoup} in the next section.

\begin{proposition}\label{opprop}
Let $\cL$ be the elliptic differential expression in \eqref{cl}. Then the operator
\begin{equation}\label{minop}
Sf=\cL f,\qquad \dom S=H^2_0(\Omega),
\end{equation}
is a densely defined closed symmetric operator in $L^2(\Omega)$
with infinite deficiency indices $n_\pm(S)$ and the adjoint $S^*$ of $S$ coincides with the maximal operator 
associated to $\cL$,
\begin{equation*}
S^*f=\cL f,\qquad \dom S^*=\bigl\{f\in L^2(\Omega):\cL f\in L^2(\Omega)\bigr\}.
\end{equation*}
The operator 
\begin{equation*}
Tf=\cL f,\qquad \dom T=H^2(\Omega),
\end{equation*}
is not closed as an operator in $L^2(\Omega)$ and $T$ 
satisfies $\overline T=S^*$ and $T^*=S$. Furthermore, the selfadjoint operators 
$A_D$ and $A_N$ in \eqref{adan} are extensions of $S$ and restrictions of $T$.
\end{proposition}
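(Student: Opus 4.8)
The plan is to verify each assertion of Proposition~\ref{opprop} using standard elliptic regularity and the trace theory recalled in \eqref{tracemap}--\eqref{greenid}. First I would show that $S$ in \eqref{minop} is symmetric: for $f,g\in H^2_0(\Omega)$ all boundary traces in Green's identity \eqref{greenid} vanish, so $(\cL f,g)_\Omega=(f,\cL g)_\Omega$, and since $C_0^\infty(\Omega)\subset H^2_0(\Omega)$ is dense in $L^2(\Omega)$ the operator $S$ is densely defined. Closedness is immediate because $H^2_0(\Omega)$ is closed in $H^2(\Omega)$ and the graph norm of $S$ is equivalent, via interior elliptic estimates, to the $H^2$-norm on $H^2_0(\Omega)$.

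Next I would identify $S^*$. One inclusion is clear: if $g\in\dom S^*$ then for all $\varphi\in C_0^\infty(\Omega)$ we have $(\cL\varphi,g)_\Omega=(\varphi,S^*g)_\Omega$, i.e. $\cL g=S^*g$ in the distributional sense, so $g$ lies in the maximal domain $\{g\in L^2(\Omega):\cL g\in L^2(\Omega)\}$ and $S^*g=\cL g$. For the reverse inclusion one uses that $H^2_0(\Omega)=\overline{C_0^\infty(\Omega)}$ in $H^2(\Omega)$ together with the extended Green identity (valid for $f\in H^2_0(\Omega)$, $g$ in the maximal domain, with all boundary pairings zero because the traces of $f$ vanish) to conclude $(\cL f,g)_\Omega=(f,\cL g)_\Omega$ for every $f\in\dom S$, hence $g\in\dom S^*$. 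This gives $S^*f=\cL f$ on the maximal domain.

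For the operator $T$ with $\dom T=H^2(\Omega)$: since $A_D$ and $A_N$ are selfadjoint and contained in the maximal operator $S^*$, and $\dom A_D,\dom A_N\subset H^2(\Omega)=\dom T$ with $\cL$ acting the same way, $T$ extends both $A_D$ and $A_N$ and $T\subset S^*$. That $\overline T=S^*$ follows from the density of $H^2(\Omega)$ as a core of $\dom S^*$ in the graph norm; I would cite \cite{LM72,W} (or deduce it from the decomposition $\dom S^*=\dom A_D\,\dot+\,\cN_\lambda(S^*)$ of \eqref{decoall} combined with the fact that $\cN_\lambda(T)=\ker(T-\lambda)$ is dense in $\cN_\lambda(S^*)$, which is the elliptic regularity statement that $H^2$-solutions of $\cL u=\lambda u$ are dense among $L^2$-solutions). $T$ is not closed precisely because $H^2(\Omega)$ is a proper dense subspace of the maximal domain. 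To compute $T^*$: for $f,g\in H^2(\Omega)$, Green's identity \eqref{greenid} shows $g\in\dom T^*$ forces all boundary pairings $(f|_{\partial\Omega},\partial g/\partial\nu|_{\partial\Omega})_{\partial\Omega}-(\partial f/\partial\nu|_{\partial\Omega},g|_{\partial\Omega})_{\partial\Omega}$ to vanish for all $f\in H^2(\Omega)$; by surjectivity of the trace map \eqref{tracemap} the pairs $\{f|_{\partial\Omega},\partial f/\partial\nu|_{\partial\Omega}\}$ range over a dense subset of $H^{3/2}\times H^{1/2}$, forcing $g|_{\partial\Omega}=0$ and $\partial g/\partial\nu|_{\partial\Omega}=0$, i.e. $g\in H^2_0(\Omega)$, so $T^*\subset S$; the reverse inclusion $S\subset T^*$ is symmetry of $\cL$ on $H^2_0(\Omega)$ against $H^2(\Omega)$, again by \eqref{greenid}. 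Hence $T^*=S$.

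The main obstacle is the identification $\overline T=S^*$ (equivalently, that $H^2$-solutions of $\cL u=\lambda u$ are dense in $L^2$-solutions): this is the genuine elliptic-regularity input and is where one must invoke the theory of \cite{LM72,W} rather than argue by hand; everything else reduces to bookkeeping with Green's identity and the surjectivity of the trace map. Finally, infinite deficiency indices $n_\pm(S)=\infty$ follow because $\cN_{\lambda}(S^*)$ is infinite-dimensional — the boundary data space $H^{3/2}(\partial\Omega)$ is infinite-dimensional and the map $f_\lambda\mapsto f_\lambda|_{\partial\Omega}$ from $\cN_\lambda(T)$ is injective (by unique solvability of the Dirichlet problem, as $\lambda\in\rho(A_D)$) with dense range — and that $S$ admits the selfadjoint extensions $A_D,A_N$ shows $n_+(S)=n_-(S)$.
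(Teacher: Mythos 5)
The paper itself does not prove Proposition~\ref{opprop}: it cites \cite{LM72} and says the result follows by the methods of Theorem~\ref{opscoup}, whose proof is therefore the relevant benchmark. Measured against that, most of your outline is sound (symmetry and dense definedness of $S$, the distributional identification of $S^*$ with the maximal operator via the extended Green identity, and the deficiency-index count via the bijection $\cN_\lambda(T)\ni f_\lambda\mapsto f_\lambda|_{\partial\Omega}\in H^{3/2}(\partial\Omega)$ for $\lambda=\pm i$), but your computation of $T^*$ has a genuine gap: you begin ``for $f,g\in H^2(\Omega)$'' and apply Green's identity \eqref{greenid}, yet an element $g\in\dom T^*$ is a priori only in $L^2(\Omega)$ with $\cL g\in L^2(\Omega)$ distributionally, so you may not invoke \eqref{greenid} or treat $g|_{\partial\Omega}$ and $\tfrac{\partial g}{\partial\nu}|_{\partial\Omega}$ as $L^2(\partial\Omega)$-traces until you know $g\in H^2(\Omega)$. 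The paper closes exactly this gap in the proof of Theorem~\ref{opscoup} with a one-line observation that also renders your trace-surjectivity argument unnecessary for this step: since $A_D\subset T$ and $A_N\subset T$, one has $T^*\subset A_D^*=A_D$ and $T^*\subset A_N^*=A_N$, hence $\dom T^*\subset\dom A_D\cap\dom A_N=H^2_0(\Omega)=\dom S$, and $S\subset T^*$ follows from Green's identity as you say. Note moreover that once $T^*=S$ is in hand you get $\overline T=T^{**}=S^*$ by pure functional analysis, so the ``density of $H^2$-solutions among $L^2$-solutions,'' which you single out as the main obstacle requiring \cite{LM72}, is not needed at all for $\overline T=S^*$ (it is a consequence, not an ingredient); the only place elliptic theory genuinely enters is in identifying $S^*$ with the maximal operator, where your extended-Green argument is fine.

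Two further points. Your claim that $T$ is not closed ``precisely because $H^2(\Omega)$ is a proper dense subspace of the maximal domain'' shifts the whole burden onto the unproved assertion that the maximal domain strictly contains $H^2(\Omega)$; this is true but not obvious, and the paper's proof of the analogous statement in Theorem~\ref{opscoup} is designed precisely to avoid exhibiting such an element: assuming $T=\overline T$, it forms the subspaces $\cM$ and $\cN$, shows $\cM^\bot+\cN^\bot$ would be closed, and contradicts, via \cite[IV Theorem 4.8]{K76}, the fact that $\cM+\cN$ contains the non-closed component $H^{3/2}(\partial\Omega)\times H^{1/2}(\partial\Omega)$ of $L^2(\partial\Omega)\oplus L^2(\partial\Omega)$. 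If you keep your route you must either cite the characterization of the maximal domain from \cite{LM72} or construct an $L^2$-solution of $\cL u=\lambda u$ with Dirichlet data outside $H^{3/2}(\partial\Omega)$. Finally, the closedness of $S$ does not follow from \emph{interior} elliptic estimates, which do not control the $H^2$-norm up to $\partial\Omega$; either invoke the global a priori estimate, or note that $S\subset A_D$ with $A_D$ closed and $H^2_0(\Omega)$ closed in $\dom A_D$ for the graph norm, or simply observe that $S=T^*$ is automatically closed once that identity is established.
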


In order to define a mapping $\Gamma_{\lambda_0}$ for the definition of a generalized $Q$-function
associated to the triple $\{S,A_D,T\}$ we make use of the decomposition \eqref{decoall} in the present situation.
More precisely, for all points $\lambda$ in the resolvent set $\rho(A_D)$ of the selfadjoint Dirichlet operator 
$A_D$ we have the direct sum decomposition of $\dom T=H^2(\Omega)$:
\begin{equation}\label{h2deco}
H^2(\Omega)=\dom A_D\,\dot +\,\cN_\lambda(T)=\bigl\{f\in H^2(\Omega):f\vert_{\partial\Omega}=0\bigr\}\,\dot +\,
\cN_\lambda(T),
\end{equation}
where 
\begin{equation*}
\cN_\lambda(T)=\ker(T-\lambda)=\bigl\{f_\lambda\in H^2(\Omega): \cL f_\lambda=\lambda f_\lambda \bigr\}.
\end{equation*}
Let now $\varphi$ be a function in $H^{3/2}(\partial\Omega)$ and let ${\lambda_0}\in\rho(A_D)$. 
Then it follows from \eqref{tracemap} and \eqref{h2deco} that there exists a unique function 
$f_{\lambda_0}\in H^2(\Omega)$ which solves the equation 
$\cL f_{\lambda_0}=\lambda_0 f_{\lambda_0}$, i.e., $f_{\lambda_0}\in\cN_{\lambda_0}(T)$, 
and satisfies $f_{\lambda_0}\vert_{\partial\Omega}=\varphi$. We shall denote the mapping that assigns 
$f_{\lambda_0}$ to $\varphi$ by
$\Gamma_{\lambda_0}$,
\begin{equation}\label{Gammalambda0}
H^{3/2}(\partial\Omega)\ni \varphi\mapsto \Gamma_{\lambda_0}\varphi :=f_{\lambda_0}\in\cN_{\lambda_0}(T),
\end{equation}
and we regard $\Gamma_{\lambda_0}$ as an operator from $L^2(\partial\Omega)$ into $L^2(\Omega)$
with $\dom \Gamma_{\lambda_0}=H^{3/2}(\partial\Omega)$ and $\ran\Gamma_{\lambda_0}=\cN_{\lambda_0}(T)$.

\begin{proposition}\label{Gammalambda0prop}
Let $\lambda_0\in\rho(A_D)$, let $\Gamma_{\lambda_0}$ be as in \eqref{Gammalambda0} and let $\lambda\in\rho(A_D)$. 
Then the following holds:
\begin{enumerate}
\item [{\rm (i)}] $\Gamma_{\lambda_0}$ is a bounded operator from $L^2(\partial\Omega)$ in $L^2(\Omega)$ with dense
domain $H^{3/2}(\partial\Omega)$;
\item [{\rm (ii)}] The operator $\Gamma(\lambda)=(I+(\lambda-\lambda_0)(A_D-\lambda)^{-1})\Gamma_{\lambda_0}$ is given
by 
\begin{equation*}
\Gamma(\lambda) \varphi=f_\lambda,\quad\text{where}\quad f_\lambda\in\cN_\lambda(T)\,\,\,\,\text{and}\,\,\,\,
f_\lambda\vert_{\partial\Omega}=\varphi;
\end{equation*}
\item [{\rm (iii)}] The mapping $\Gamma(\bar\lambda)^*:L^2(\Omega)\rightarrow L^2(\partial\Omega)$ satisfies 
\begin{equation*}
\Gamma(\bar\lambda)^*(A_D-\lambda)f=-\frac{\partial f}{\partial\nu}\Bigl|_{\partial\Omega},\qquad f\in\dom A_D.
\end{equation*}
\end{enumerate}
\end{proposition}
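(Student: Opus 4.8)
The plan is to verify the three assertions in turn, relying on the abstract machinery of Section~\ref{genq} and the elliptic regularity facts collected in Proposition~\ref{opprop} and around \eqref{tracemap}.

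\textbf{Statement (i).} The density of $H^{3/2}(\partial\Omega)$ in $L^2(\partial\Omega)$ is standard. To see that $\Gamma_{\lambda_0}$ is bounded as an operator from $L^2(\partial\Omega)$ into $L^2(\Omega)$, I would use the decomposition \eqref{h2deco} and the continuous surjectivity of the trace map \eqref{tracemap}: the map $\varphi\mapsto f_{\lambda_0}$ is the solution operator of the Dirichlet problem $\cL f_{\lambda_0}=\lambda_0 f_{\lambda_0}$, $f_{\lambda_0}|_{\partial\Omega}=\varphi$, which by elliptic regularity is bounded from $H^{3/2}(\partial\Omega)$ into $H^2(\Omega)$, hence in particular into $H^1(\Omega)$. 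One then estimates $\|f_{\lambda_0}\|_{L^2(\Omega)}$ by the $L^2(\partial\Omega)$-norm of $\varphi$; the natural way is to test the equation $(\cL-\lambda_0)f_{\lambda_0}=0$ against $f_{\lambda_0}$, integrate by parts using Green's identity \eqref{greenid}, and use the trace theorem $\|f_{\lambda_0}|_{\partial\Omega}\|_{H^{1/2}}\lesssim\|f_{\lambda_0}\|_{H^1}$ together with an interpolation/absorption argument, or simply cite the known $L^2$--$L^2$ continuity of the Poisson operator for a second order elliptic problem. This is essentially the content of the boundedness claim for $\Gamma_{\lambda_0}$ already used in Lemma~\ref{gamlem}.

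\textbf{Statement (ii).} Here the strategy is to identify $\Gamma(\lambda)\varphi$ as the unique element of $\cN_\lambda(T)$ with trace $\varphi$, using the abstract formula from Lemma~\ref{gamlem}. By Lemma~\ref{gamlem}, $\Gamma(\lambda)=(I+(\lambda-\lambda_0)(A_D-\lambda)^{-1})\Gamma_{\lambda_0}$ maps $\cG_0=H^{3/2}(\partial\Omega)$ bijectively onto $\cN_\lambda(T)$, so $\Gamma(\lambda)\varphi\in\cN_\lambda(T)$ is automatic. It remains to check the boundary value. Write $f_\lambda:=\Gamma(\lambda)\varphi=\Gamma_{\lambda_0}\varphi+(\lambda-\lambda_0)(A_D-\lambda)^{-1}\Gamma_{\lambda_0}\varphi$. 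The first summand is $f_{\lambda_0}$ with $f_{\lambda_0}|_{\partial\Omega}=\varphi$, and the second summand lies in $\ran(A_D-\lambda)^{-1}=\dom A_D\subset H^2(\Omega)$, hence its trace on $\partial\Omega$ vanishes. Since the trace map \eqref{tracemap} is continuous on $H^2(\Omega)$, it follows that $f_\lambda|_{\partial\Omega}=\varphi$, and uniqueness of such an element in $\cN_\lambda(T)$ comes from the direct sum decomposition \eqref{h2deco}.

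\textbf{Statement (iii).} This is the one requiring real work and is where Green's identity enters essentially. The idea is to compute, for $f\in\dom A_D$ and $\varphi\in H^{3/2}(\partial\Omega)$, the pairing $(\Gamma(\bar\lambda)^*(A_D-\lambda)f,\varphi)_{\partial\Omega}=((A_D-\lambda)f,\Gamma(\bar\lambda)\varphi)_\Omega$. By part (ii), $g:=\Gamma(\bar\lambda)\varphi\in\cN_{\bar\lambda}(T)$ satisfies $\cL g=\bar\lambda g$ and $g|_{\partial\Omega}=\varphi$. Hence $((A_D-\lambda)f,g)_\Omega=(\cL f,g)_\Omega-\lambda(f,g)_\Omega$, and since $\cL g=\bar\lambda g$ we have $(f,\cL g)_\Omega=(f,\bar\lambda g)_\Omega=\lambda(f,g)_\Omega$, so the expression equals $(\cL f,g)_\Omega-(f,\cL g)_\Omega$. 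Now both $f$ and $g$ lie in $H^2(\Omega)$, so Green's identity \eqref{greenid} applies and gives
\begin{equation*}
(\cL f,g)_\Omega-(f,\cL g)_\Omega=\left(f|_{\partial\Omega},\frac{\partial g}{\partial\nu}\Bigl|_{\partial\Omega}\right)_{\partial\Omega}-\left(\frac{\partial f}{\partial\nu}\Bigl|_{\partial\Omega},g|_{\partial\Omega}\right)_{\partial\Omega}.
\end{equation*}
Since $f\in\dom A_D$ means $f|_{\partial\Omega}=0$, the first term drops out, and using $g|_{\partial\Omega}=\varphi$ we obtain $-(\tfrac{\partial f}{\partial\nu}|_{\partial\Omega},\varphi)_{\partial\Omega}$. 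As $\varphi$ ranges over the dense subspace $H^{3/2}(\partial\Omega)$ of $L^2(\partial\Omega)$, this identifies $\Gamma(\bar\lambda)^*(A_D-\lambda)f=-\tfrac{\partial f}{\partial\nu}|_{\partial\Omega}$, which proves (iii). The main obstacle is purely bookkeeping: one must be careful that $\Gamma(\bar\lambda)^*$ is the Hilbert space adjoint of the (densely defined, bounded) operator $\Gamma(\bar\lambda)$ acting between $L^2(\partial\Omega)$ and $L^2(\Omega)$, that $(A_D-\lambda)f$ indeed lies in $L^2(\Omega)$ so the pairing makes sense, and that $\tfrac{\partial f}{\partial\nu}|_{\partial\Omega}\in H^{1/2}(\partial\Omega)\subset L^2(\partial\Omega)$ by \eqref{tracemap}; no analytic subtlety beyond Green's identity for $H^2$-functions is needed.
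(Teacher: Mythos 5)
Your parts (ii) and (iii) are correct and follow the paper's proof essentially verbatim: in (ii) the trace of the $\dom A_D$-summand vanishes and uniqueness comes from \eqref{h2deco}, and in (iii) Green's identity together with $f|_{\partial\Omega}=0$ does the work. (One presentational point on (iii): you should not begin with the pairing $(\Gamma(\bar\lambda)^*(A_D-\lambda)f,\varphi)_{\partial\Omega}$, since membership of $(A_D-\lambda)f$ in $\dom\Gamma(\bar\lambda)^*$ is part of what must be proved; but your closing step computes $((A_D-\lambda)f,\Gamma(\bar\lambda)\varphi)_\Omega$ for all $\varphi\in H^{3/2}(\partial\Omega)$ and reads off the adjoint, which is the right order and is exactly what the paper does.)

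The genuine problem is part (i). Boundedness of the solution operator from $H^{3/2}(\partial\Omega)$ into $H^2(\Omega)$ says nothing about an $L^2(\partial\Omega)\to L^2(\Omega)$ bound, and the repair you sketch --- test $(\cL-\lambda_0)f_{\lambda_0}=0$ against $f_{\lambda_0}$, integrate by parts, invoke the trace theorem --- fails for two reasons: Green's identity produces the boundary term $\bigl(\tfrac{\partial f_{\lambda_0}}{\partial\nu}|_{\partial\Omega},f_{\lambda_0}|_{\partial\Omega}\bigr)_{\partial\Omega}$, and the conormal derivative is not controlled by $\Vert\varphi\Vert_{L^2(\partial\Omega)}$; moreover the trace inequality $\Vert f|_{\partial\Omega}\Vert_{H^{1/2}}\lesssim\Vert f\Vert_{H^1}$ points the wrong way, since you need to bound an interior norm by a boundary norm. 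Citing the $L^2$-continuity of the Poisson operator would settle it, but that is precisely the statement being proved, and in the paper it is obtained by duality. The paper's route --- for which your own part (iii) already supplies every ingredient --- is to deduce (i) from (iii): for $\lambda\in\rho(A_D)$ the operator $A_D-\lambda$ maps $\dom A_D$ onto $L^2(\Omega)$, so (iii) shows that the closed operator $\Gamma(\bar\lambda)^*$ is everywhere defined, hence bounded, hence $\Gamma(\bar\lambda)\subset\Gamma(\bar\lambda)^{**}$ is bounded; taking $\bar\lambda=\lambda_0$ (admissible since $\rho(A_D)$ is conjugation-symmetric) gives (i). You should therefore reorder the argument: prove (ii), then (iii), then obtain (i) as a corollary of (iii).
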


\begin{proof}
Statement (i) will be a consequence of (iii). We prove assertion (ii). Recall that by Lemma~\ref{gamlem}
the range of the operator $\Gamma(\lambda)$, $\lambda\in\rho(A_D)$, is $\cN_\lambda(T)$. Let
$\varphi\in \dom\Gamma(\lambda)=H^{3/2}(\partial\Omega)$ and choose elements $f_\lambda\in\cN_\lambda(T)$ and 
$f_{\lambda_0}\in\cN_{\lambda_0}(T)$ such that 
\begin{equation*}
f_\lambda\vert_{\partial\Omega}=\varphi=f_{\lambda_0}\vert_{\partial\Omega}
\end{equation*}
holds. According to \eqref{h2deco} the functions $f_\lambda$ and $f_{\lambda_0}$ are unique. Then 
$\Gamma_{\lambda_0}\varphi=f_{\lambda_0}$ and hence we obtain
\begin{equation*}
\Gamma(\lambda)\varphi=\Gamma_{\lambda_0}\varphi+(\lambda-\lambda_0)(A_D-\lambda)^{-1}\Gamma_{\lambda_0} \varphi
=f_{\lambda_0}+(\lambda-\lambda_0)(A_D-\lambda)^{-1}\Gamma_{\lambda_0} \varphi.
\end{equation*}
Since $(\lambda-\lambda_0)(A_D-\lambda)^{-1}\Gamma_{\lambda_0} \varphi$ belongs to $\dom A_D$ it 
is clear that the trace of this element vanishes. Therefore, the traces of the functions 
$\Gamma(\lambda)\varphi \in\cN_\lambda(T)$ and $f_{\lambda_0}$ coincide,
\begin{equation*}
(\Gamma(\lambda) \varphi)\vert_{\partial\Omega}=f_{\lambda_0}\vert_{\partial\Omega}=\varphi=f_\lambda\vert_{\partial\Omega}.
\end{equation*}
Thus we have that the traces of $\Gamma(\lambda)\varphi \in\cN_\lambda(T)$ and $f_\lambda\in\cN_\lambda(T)$ coincide
and from \eqref{h2deco} we conclude $\Gamma(\lambda)\varphi =f_\lambda$.
\vskip 0.3cm\noindent
(iii) Let $\varphi\in H^{3/2}(\partial\Omega)$ and choose the unique function $g_{\bar\lambda}\in\cN_{\bar\lambda}(T)$
with the property $g_{\bar\lambda}\vert_{\partial\Omega}=\varphi$. Hence we have $\Gamma(\bar\lambda)\varphi=g_{\bar\lambda}$
and for $f\in\dom A_D$ it follows
\begin{equation*}
\bigl(\Gamma(\bar\lambda)\varphi,(A_D-\lambda)f\bigr)_\Omega=
(g_{\bar\lambda},A_D f)_\Omega-(\bar\lambda g_{\bar\lambda},f)_\Omega=
(g_{\bar\lambda},A_D f)_\Omega-(T g_{\bar\lambda},f)_\Omega.
\end{equation*}
Making use of Green's identity \eqref{greenid} we find
\begin{equation*}
(g_{\bar\lambda},A_D f)_\Omega-(T g_{\bar\lambda},f)_\Omega=
\left(\frac{\partial g_{\bar\lambda}}{\partial\nu}\Bigl|_{\partial\Omega},f\vert_{\partial\Omega}\right)_{\partial\Omega}
-\left(g_{\bar\lambda}\vert_{\partial\Omega},
\frac{\partial f}{\partial\nu}\Bigl|_{\partial\Omega}\right)_{\partial\Omega}
\end{equation*}
and since the trace of $f\in\dom A_D$ vanishes the first summand on the right hand side is zero. Therefore
\begin{equation*}
\bigl(\Gamma(\bar\lambda)\varphi,(A_D-\lambda)f\bigr)_\Omega=-\left(g_{\bar\lambda}\vert_{\partial\Omega},
\frac{\partial f}{\partial\nu}\Bigl|_{\partial\Omega}\right)_{\partial\Omega}=\left(\varphi,-
\frac{\partial f}{\partial\nu}\Bigl|_{\partial\Omega}\right)_{\partial\Omega}
\end{equation*}
holds for all $\varphi\in\dom\Gamma(\bar\lambda)=H^{3/2}(\partial\Omega)$. This gives 
$(A_D-\lambda)f\in\dom\Gamma(\bar\lambda)^*$ and 
\begin{equation*}
\Gamma(\bar\lambda)^*(A_D-\lambda)f=-\frac{\partial f}{\partial\nu}\Bigl|_{\partial\Omega}.
\end{equation*}
Moreover, as $\lambda\in\rho(A_D)$ and $f\in\dom A_D$ was arbitrary we see that $\Gamma(\bar\lambda)^*$
is defined on the whole space $L^2(\Omega)$. This together with the fact that $\Gamma(\bar\lambda)^*$ is
closed implies 
\begin{equation*}
\Gamma(\bar\lambda)^*\in\cL\bigl(L^2(\Omega),L^2(\partial\Omega)\bigr)
\end{equation*}
for $\lambda\in\rho(A_D)$ and, in particular, 
$\Gamma(\bar\lambda)\subset\overline{\Gamma(\bar\lambda)}=\Gamma(\bar\lambda)^{**}$ is bounded. Inserting 
$\lambda_0=\bar\lambda$ this yields assertion (i).
\end{proof}

In the study of elliptic differential operators the so-called 
Dirichlet-to-Neumann map plays an important role, we mention only
\cite{AP04,BMNW08,GLMZ05,GMZ07,GMZ07-2,GM08,GM08-2,G68,M04,MPP07,MPR07,P07,P08,PR09,Post07,V52}.
Roughly speaking this operator maps the Dirichlet boundary value $f_\lambda\vert_{\partial\Omega}$ 
of an $H^2(\Omega)$-solution of the equation $\cL u=\lambda u$ onto the Neumann boundary
value $\tfrac{\partial f_\lambda}{\partial\nu}|_{\partial\Omega}$ of this solution.
In the following definition also a minus sign arises, which is needed to obtain a generalized $Q$-function
in Theorem~\ref{bigthm1}.
Otherwise $-Q$ would turn out to be a generalized $Q$-function.

\begin{definition}\label{dirneu}
Let $\lambda\in\rho(A_D)$ and assign to $\varphi\in H^{3/2}(\partial\Omega)$ the unique function 
$f_\lambda\in\cN_\lambda(T)$ such that $f_\lambda\vert_{\partial\Omega}=\varphi$, see \eqref{tracemap} and \eqref{h2deco}.
The operator $Q(\lambda)$ in $L^2(\partial\Omega)$ defined by
\begin{equation}\label{dnmap}
Q(\lambda) \varphi=Q(\lambda)(f_\lambda|_{\partial\Omega}):=-\frac{\partial f_\lambda}{\partial\nu}\Bigl|_{\partial\Omega},
\qquad \varphi\in \dom Q(\lambda)=H^{3/2}(\partial\Omega),
\end{equation}
is called the {\em Dirichlet-to-Neumann map} associated to $\cL$.
\end{definition}

Note that by \eqref{tracemap} the range of the Dirichlet-to-Neumann map $Q(\lambda)$, $\lambda\in\rho(A_D)$, 
lies in $H^{1/2}(\partial\Omega)$. We remark that the Dirichlet-to-Neumann map can be extended, e.g., to 
an operator from $H^1(\partial\Omega)$ in $L^2(\partial\Omega)$ if instead of $H^2(\Omega)$
the operator $T$ is defined on a suitable subspace of $H^{3/2}(\Omega)$, cf. \cite{AP04,BF62,B65,BL07,G71,LM72}.
However, for our purposes this is not necessary since $A_D$ and $A_N$ are defined on subspaces of $H^2(\Omega)$.

In the next theorem we show that the Dirichlet-to-Neumann map is a generalized $Q$-function
and we illustrate the usefulness of this object in the representation of the difference of the resolvents of the Dirichlet
and Neumann operators $A_D$ and $A_N$ in \eqref{adan}. Similar Krein type resolvent formulas can also
be found in \cite{BL07,BGW09,GM08,GM08-2,P08,PR09,Post07}.
The fact that the difference of the resolvents belongs to some
von Neumann-Schatten class depending on the dimension of the space is well-known and goes back to M.S.~Birman, 
cf. \cite{B62}.

\begin{theorem}\label{bigthm1}
Let $\cL$ be the elliptic differential expression in \eqref{cl} and let $A_D$ and $A_N$ be the selfadjoint
realizations of $\cL$ in \eqref{adan}. Denote by $S$ the minimal operator associated to $\cL$ 
and let $T=\cL\upharpoonright H^2(\Omega)$ be as in Proposition~\ref{opprop}. Define 
$\Gamma(\lambda)$ as in 
Proposition~\ref{Gammalambda0prop} and let $Q(\lambda)$, $\lambda\in\rho(A_D)$, be the Dirichlet-to-Neumann map.
Then the following holds:
\begin{enumerate}
\item [{\rm (i)}] $Q$ is a generalized $Q$-function of the triple $\{S,A_D,T\}$; 
\item [{\rm (ii)}] The operator $Q(\lambda)$ is injective 
for all $\lambda\in\rho(A_D)\cap\rho(A_N)$ and the resolvent formula
\begin{equation}\label{resform}
(A_D-\lambda)^{-1}-(A_N-\lambda)^{-1}=\Gamma(\lambda) Q(\lambda)^{-1} \Gamma(\bar\lambda)^*
\end{equation}
holds;
\item [{\rm (iii)}] For $p\in\dN$ and $2p+1>n$ the difference of the resolvents in \eqref{resform}
belongs to the von Neumann-Schatten class $\sS_p(L^2(\Omega))$.
\end{enumerate}
\end{theorem}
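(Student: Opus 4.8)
The plan is to treat the three assertions in turn, using the abstract machinery of Section~\ref{genq} together with Proposition~\ref{opprop} and Proposition~\ref{Gammalambda0prop}. For (i) I would first check the defining identity \eqref{q}, namely
\begin{equation*}
Q(\lambda)-Q(\mu)^*=(\lambda-\bar\mu)\Gamma(\mu)^*\Gamma(\lambda),\qquad \lambda,\mu\in\rho(A_D).
\end{equation*}
Fix $\varphi,\psi\in H^{3/2}(\partial\Omega)$ and let $f_\lambda\in\cN_\lambda(T)$, $g_\mu\in\cN_\mu(T)$ be the unique solutions with $f_\lambda|_{\partial\Omega}=\varphi$, $g_\mu|_{\partial\Omega}=\psi$, so that $\Gamma(\lambda)\varphi=f_\lambda$ and $\Gamma(\mu)\psi=g_\mu$ by Proposition~\ref{Gammalambda0prop}(ii). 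Then Green's identity \eqref{greenid} applied to $f_\lambda$ and $g_\mu$ gives
\begin{equation*}
(\lambda-\bar\mu)(f_\lambda,g_\mu)_\Omega=(\cL f_\lambda,g_\mu)_\Omega-(f_\lambda,\cL g_\mu)_\Omega
=\Bigl(\varphi,\tfrac{\partial g_\mu}{\partial\nu}\big|_{\partial\Omega}\Bigr)_{\partial\Omega}
-\Bigl(\tfrac{\partial f_\lambda}{\partial\nu}\big|_{\partial\Omega},\psi\Bigr)_{\partial\Omega}.
\end{equation*}
By the definition \eqref{dnmap} of the Dirichlet-to-Neumann map the right-hand side equals $(Q(\lambda)\varphi,\psi)_{\partial\Omega}-(\varphi,Q(\mu)\psi)_{\partial\Omega}$, which is exactly $\bigl((Q(\lambda)-Q(\mu)^*)\varphi,\psi\bigr)_{\partial\Omega}$ since $\varphi\in\dom Q(\lambda)$ and $\psi\in\dom Q(\mu)$. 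The left-hand side equals $(\lambda-\bar\mu)(\Gamma(\lambda)\varphi,\Gamma(\mu)\psi)_\Omega=\bigl((\lambda-\bar\mu)\Gamma(\mu)^*\Gamma(\lambda)\varphi,\psi\bigr)_{\partial\Omega}$. This proves \eqref{q}; combined with $\dom Q(\lambda)=\cG_0=H^{3/2}(\partial\Omega)$ for all $\lambda$ and the properties of $\Gamma(\lambda)$ from Lemma~\ref{gamlem} and Proposition~\ref{opprop}, it shows $Q$ is a generalized $Q$-function of $\{S,A_D,T\}$.

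For (ii) I would argue that $Q(\lambda)$ is injective precisely when $\lambda\in\rho(A_N)$: if $Q(\lambda)\varphi=0$, then the corresponding $f_\lambda\in\cN_\lambda(T)$ satisfies $\tfrac{\partial f_\lambda}{\partial\nu}|_{\partial\Omega}=0$, hence $f_\lambda\in\dom A_N$ and $(A_N-\lambda)f_\lambda=0$; since $\lambda\in\rho(A_N)$ this forces $f_\lambda=0$ and so $\varphi=f_\lambda|_{\partial\Omega}=0$. For the resolvent formula \eqref{resform} I would verify it by applying both sides to an element of the form $(A_D-\lambda)f$, $f\in\dom A_D$ — such elements exhaust $L^2(\Omega)$ since $\lambda\in\rho(A_D)$. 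On the one hand $(A_D-\lambda)^{-1}(A_D-\lambda)f-(A_N-\lambda)^{-1}(A_D-\lambda)f=f-(A_N-\lambda)^{-1}(A_D-\lambda)f$. Write $u:=(A_N-\lambda)^{-1}(A_D-\lambda)f$, so $f-u\in\cN_\lambda(T)$ because $(\cL-\lambda)(f-u)=0$, and note $(f-u)|_{\partial\Omega}=-u|_{\partial\Omega}$ while $\tfrac{\partial(f-u)}{\partial\nu}|_{\partial\Omega}=\tfrac{\partial f}{\partial\nu}|_{\partial\Omega}$ since $u\in\dom A_N$. Thus $Q(\lambda)\bigl((f-u)|_{\partial\Omega}\bigr)=-\tfrac{\partial(f-u)}{\partial\nu}|_{\partial\Omega}=-\tfrac{\partial f}{\partial\nu}|_{\partial\Omega}$, and by Proposition~\ref{Gammalambda0prop}(iii) we have $\Gamma(\bar\lambda)^*(A_D-\lambda)f=-\tfrac{\partial f}{\partial\nu}|_{\partial\Omega}=Q(\lambda)\bigl((f-u)|_{\partial\Omega}\bigr)$, so $Q(\lambda)^{-1}\Gamma(\bar\lambda)^*(A_D-\lambda)f=(f-u)|_{\partial\Omega}$. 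Applying $\Gamma(\lambda)$ and using Proposition~\ref{Gammalambda0prop}(ii) gives $\Gamma(\lambda)Q(\lambda)^{-1}\Gamma(\bar\lambda)^*(A_D-\lambda)f=f-u$, which matches the left-hand side.

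For (iii) the plan is to use a standard compactness/Schatten-class argument: since $\partial\Omega$ is a compact $(n-1)$-dimensional manifold, the trace map and the operator $\Gamma_{\lambda_0}$ smooth by $3/2$ derivatives, so $\Gamma(\lambda)$ and $\Gamma(\bar\lambda)^*$, viewed between the relevant Sobolev scales, together with $Q(\lambda)^{-1}$ (which, being the inverse of an elliptic pseudodifferential operator of order $1$ on $\partial\Omega$, gains one derivative), compose to an operator on $L^2(\Omega)$ whose singular values decay like those of an embedding $H^s(\partial\Omega)\hookrightarrow H^{s'}(\partial\Omega)$ with gain $s-s'$ large; by Weyl asymptotics these embeddings lie in $\sS_p$ once the gain exceeds $(n-1)/p$, and tracking the exact exponent yields membership in $\sS_p(L^2(\Omega))$ whenever $2p+1>n$. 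The main obstacle here is bookkeeping the mapping properties on the fractional Sobolev spaces carefully enough to extract the sharp threshold $2p+1>n$; alternatively one can simply invoke the result of Birman \cite{B62} that $(A_D-\lambda)^{-1}-(A_N-\lambda)^{-1}\in\sS_p$ for $2p+1>n$, which is cited in the statement, and the task reduces to confirming consistency of the exponent with the factorisation in \eqref{resform}.
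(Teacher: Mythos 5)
Your proposal is correct and follows essentially the same route as the paper: Green's identity yields the defining relation \eqref{q} for (i), injectivity of $Q(\lambda)$ comes from $\lambda\in\rho(A_N)$, the resolvent formula is checked by direct computation, and (iii) is delegated to Birman's result \cite{B62}, exactly as the paper does. Two remarks. First, in (i) you pass from $(\varphi,Q(\mu)\psi)_{\partial\Omega}$ to $(Q(\mu)^*\varphi,\psi)_{\partial\Omega}$ ``since $\varphi\in\dom Q(\lambda)$ and $\psi\in\dom Q(\mu)$''; as written this is not yet justified, because nothing a priori guarantees $H^{3/2}(\partial\Omega)\subset\dom Q(\mu)^*$, and the operator identity \eqref{q} on $\cG_0$ only makes sense once that inclusion is known --- the paper therefore first proves $Q(\bar\mu)\subset Q(\mu)^*$ as a separate preliminary step. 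Your own Green's-identity computation repairs this if read in the right order: rearranged, it states that $(Q(\mu)\psi,\varphi)_{\partial\Omega}=\bigl(\psi,\,Q(\lambda)\varphi-(\lambda-\bar\mu)\Gamma(\mu)^*\Gamma(\lambda)\varphi\bigr)_{\partial\Omega}$ for all $\psi\in\dom Q(\mu)$ (using that $\Gamma(\mu)^*$ is everywhere defined and bounded by Proposition~\ref{Gammalambda0prop}), which simultaneously gives $\varphi\in\dom Q(\mu)^*$ and the formula for $Q(\mu)^*\varphi$. Second, your verification of \eqref{resform} runs ``forward'' from $(A_D-\lambda)f$ with $f\in\dom A_D$, whereas the paper defines $g=(A_D-\lambda)^{-1}f-\Gamma(\lambda)Q(\lambda)^{-1}\Gamma(\bar\lambda)^*f$ and checks $g\in\dom A_N$ with $(A_N-\lambda)g=f$; the two computations are mirror images and both are complete, since $\ran(A_D-\lambda)=L^2(\Omega)$. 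For (iii) the paper likewise simply invokes \cite{B62}, so your fallback is what is intended; the pseudodifferential/singular-value sketch is plausible but would need the Sobolev bookkeeping carried out to stand on its own.
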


\begin{proof}
In order to proof assertion (i) we have to check the relation
\begin{equation}\label{qrel1}
Q(\lambda)-Q(\mu)^*=(\lambda-\bar\mu)\Gamma(\mu)^*\Gamma(\lambda),\qquad\lambda,\mu\in\rho(A_D),
\end{equation}
on $\dom Q(\lambda)\cap\dom Q(\mu)^*$. For this it will be first
shown that $\dom Q(\lambda)=H^{3/2}(\partial\Omega)$ is a subset of $\dom Q(\mu)^*$ and that $Q(\mu)^*$ is an extension of $Q(\bar\mu)$. 
Let $\psi\in H^{3/2}(\partial\Omega)$ and choose
the unique function $f_{\bar\mu}\in\cN_{\bar\mu}(T)$ such that $f_{\bar\mu}|_{\partial\Omega}=\psi$. 
For an arbitrary $\varphi\in\dom Q(\mu)=H^{3/2}(\partial\Omega)$ let $f_\mu\in\cN_\mu(T)$ be the unique function that satisfies 
$f_\mu|_{\partial\Omega}=\varphi$. By the definition of the Dirichlet-to-Neumann map we have
\begin{equation*}
Q(\mu)\varphi=-\frac{\partial f_\mu}{\partial\nu}\Bigl|_{\partial\Omega}\quad\text{and}\quad 
Q(\bar\mu)\psi=-\frac{\partial f_{\bar\mu}}{\partial\nu}\Bigl|_{\partial\Omega}
\end{equation*}
and hence Green's identity \eqref{greenid} shows
\begin{equation*}
\begin{split}
(Q(\mu)\varphi,\psi)_{\partial\Omega}&
=\left(-\frac{\partial f_\mu}{\partial\nu}\Bigl|_{\partial\Omega},f_{\bar\mu}|_{\partial\Omega} \right)_{\partial\Omega}\\
&=
\left(f_\mu|_{\partial\Omega},\frac{\partial f_{\bar\mu}}{\partial\nu}\Bigl|_{\partial\Omega} \right)_{\partial\Omega}-
\left(\frac{\partial f_\mu}{\partial\nu}\Bigl|_{\partial\Omega},f_{\bar\mu}|_{\partial\Omega} \right)_{\partial\Omega}
+
\left(\varphi,-\frac{\partial f_{\bar\mu}}{\partial\nu}\Bigl|_{\partial\Omega} \right)_{\partial\Omega}\\
&=(Tf_\mu,f_{\bar\mu})_\Omega-(f_\mu,T f_{\bar\mu})_\Omega+\left(\varphi,-\frac{\partial f_{\bar\mu}}{\partial\nu}\Bigl|_{\partial\Omega} \right)_{\partial\Omega}.
\end{split}
\end{equation*}
Since $f_\mu\in\cN_\mu(T)$ and $f_{\bar\mu}\in\cN_{\bar\mu}(T)$ it is clear that $(Tf_\mu,f_{\bar\mu})_\Omega=(f_\mu,T f_{\bar\mu})_\Omega$ 
holds and therefore we obtain
\begin{equation*}
 (Q(\mu)\varphi,\psi)_{\partial\Omega}=\left(\varphi,-\frac{\partial f_{\bar\mu}}{\partial\nu}\Bigl|_{\partial\Omega} \right)_{\partial\Omega}
\end{equation*}
for all $\varphi\in\dom Q(\mu)$. Thus $\psi\in\dom Q(\mu)^*$ and 
\begin{equation*}
Q(\mu)^*\psi=-\frac{\partial f_{\bar\mu}}{\partial\nu}\Bigl|_{\partial\Omega}=Q(\bar\mu)\psi.
\end{equation*}

Next we prove the relation \eqref{qrel1}. Let $\varphi,\psi\in H^{3/2}(\partial\Omega)$ and
choose the functions $f_\lambda\in\cN_\lambda(T)$ and $g_\mu\in\cN_\mu(T)$ such that
$f_\lambda\vert_{\partial\Omega}=\varphi$ and $g_\mu\vert_{\partial\Omega}=\psi$.
Hence we have 
\begin{equation*}
Q(\lambda)\varphi=-\frac{\partial f_\lambda}{\partial\nu}\Bigl|_{\partial\Omega},\quad 
Q(\mu)\psi=-\frac{\partial g_\mu}{\partial\nu}\Bigl|_{\partial\Omega},\quad
\Gamma(\lambda)\varphi=f_\lambda\quad\text{and}\quad\Gamma(\mu)\psi=g_\mu.
\end{equation*}
Note that $\varphi\in H^{3/2}(\Omega)$ belongs to $\dom Q(\mu)^*$ by the above considerations.
With the help of Green's identity \eqref{greenid} we find
\begin{equation*}
\begin{split}
\bigl((Q(\lambda)&-Q(\mu)^*)\varphi,\psi\bigr)_{\partial\Omega}=-\left(\frac{\partial f_\lambda}{\partial\nu}\Bigl|_{\partial\Omega},
g_\mu\vert_{\partial\Omega}\right)_{\partial\Omega}+\left(f_\lambda\vert_{\partial\Omega},
\frac{\partial g_\mu}{\partial\nu}\Bigl|_{\partial\Omega}\right)_{\partial\Omega}\\
&=(T f_\lambda,g_\mu)_\Omega - (f_\lambda, Tg_\mu)_\Omega=(\lambda-\bar\mu)(f_\lambda,g_\mu)_\Omega\\
&=(\lambda-\bar\mu)(\Gamma(\lambda)\varphi,\Gamma(\mu)\psi)_\Omega
=\bigl((\lambda-\bar\mu)\Gamma(\mu)^*\Gamma(\lambda)\varphi,\psi\bigr)_{\partial\Omega}.
\end{split}
\end{equation*}
This holds for all $\psi$ in the dense subset $H^{3/2}(\partial\Omega)$ of $L^2(\partial\Omega)$ and therefore
\eqref{qrel1} is valid on $\dom Q(\lambda)=\dom\Gamma(\lambda)=H^{3/2}(\partial\Omega)$, i.e., the 
Dirichlet-to-Neumann map is a generalized $Q$-function of the triple $\{S,A_D,T\}$.

\vskip 0.3cm\noindent
(ii) Let $\lambda\in\rho(A_D)\cap\rho(A_N)$ and suppose that we have $Q(\lambda)\varphi=0$ for 
some $\varphi\in H^{3/2}(\partial\Omega)$. There exists a unique $f_\lambda\in\cN_\lambda(T)$ such that
$f_\lambda\vert_{\partial\Omega}=\varphi$ and for this $f_\lambda$ by assumption we have 
$\tfrac{\partial f_\lambda}{\partial\nu}|_{\partial\Omega}=0$. Hence $f_\lambda\in\dom A_N\cap\cN_\lambda(T)$
and from $\lambda\in\rho(A_N)$ we conclude $f_\lambda=0$, that is, $\varphi=f_\lambda|_{\partial\Omega}=0$.

Therefore $Q(\lambda)^{-1}$, $\lambda\in\rho(A_D)\cap\rho(A_N)$ exists and, roughly speaking, $Q(\lambda)^{-1}$
maps the negative Neumann boundary values of $H^2(\Omega)$-solutions of $\cL u=\lambda u$ onto their Dirichlet boundary values.
Let us proof the formula \eqref{resform} for the difference of the resolvents of $A_D$ and $A_N$. Observe first,
that the right hand side in \eqref{resform} is well defined. In fact, by Proposition~\ref{Gammalambda0prop}~(iii)
and \eqref{tracemap} the range of $\Gamma(\bar\lambda)^*$ lies in $H^{1/2}(\partial\Omega)$ and it follows from the surjectivity of the 
mapping in \eqref{tracemap} that $Q(\lambda)^{-1}$ is defined on the whole space $H^{1/2}(\partial\Omega)$ and maps 
$H^{1/2}(\partial\Omega)$ onto $H^{3/2}(\partial\Omega)$, the domain of $\Gamma(\lambda)$. 

Let now $f\in L^2(\Omega)$. 
We claim that the function
\begin{equation}\label{function}
g=(A_D-\lambda)^{-1}f-\Gamma(\lambda)Q(\lambda)^{-1}\Gamma(\bar\lambda)^* f
\end{equation}
belongs to $\dom A_N$. It is clear that $g$ is in $H^2(\Omega)$ since $(A_D-\lambda)^{-1}f\in\dom A_D$ and
the second term on the right hand side belongs to $\cN_\lambda(T)$, the range of $\Gamma(\lambda)$. In order
to verify $\tfrac{\partial g}{\partial \nu}|_{\partial\Omega}=0$ we choose $f_D\in\dom A_D$ such that
$f=(A_D-\lambda)f_D$, so that \eqref{function} becomes 
\begin{equation}\label{function2}
g=f_D-\Gamma(\lambda)Q(\lambda)^{-1}\Gamma(\bar\lambda)^* (A_D-\lambda)f_D
= f_D+\Gamma(\lambda)Q(\lambda)^{-1}\frac{\partial f_D}{\partial\nu}\Bigl|_{\partial\Omega},
\end{equation} 
where we have used Proposition~\ref{Gammalambda0prop}~(iii).
Let $f_\lambda:=\Gamma(\lambda)Q(\lambda)^{-1}\tfrac{\partial f_D}{\partial\nu}|_{\partial\Omega}$. 
Then $f_\lambda\in\cN_\lambda(T)$ and the trace of $f_\lambda$ is given by
\begin{equation*}
f_\lambda|_{\partial\Omega}=Q(\lambda)^{-1}\frac{\partial f_D}{\partial\nu}\Bigl|_{\partial\Omega}.
\end{equation*}
Hence $Q(\lambda)f_\lambda|_{\partial\Omega}=\tfrac{\partial f_D}{\partial\nu}|_{\partial\Omega}$, but on the other hand,
by the definition of the Dirichlet-to-Neumann map  
$Q(\lambda)f_\lambda|_{\partial\Omega}=-\tfrac{\partial f_\lambda}{\partial\nu}|_{\partial\Omega}$. Therefore,
the sum of the Neumann boundary value of the function $f_\lambda$ and the Neumann boundary value of $f_D$
is zero and we conclude from \eqref{function2}
\begin{equation*}
\frac{\partial g}{\partial\nu}\Bigl|_{\partial\Omega}=
\frac{\partial f_D}{\partial\nu}\Bigl|_{\partial\Omega}+\frac{\partial f_\lambda}{\partial\nu}\Bigl|_{\partial\Omega}=0.
\end{equation*}
We have shown that $g$ in \eqref{function} belongs to $\dom A_N$. As $T$ is an extension of $A_N$ and $A_D$,
and $\ran\Gamma(\lambda)=\ker(T-\lambda)$  we obtain
\begin{equation*}
(A_N-\lambda) g=(T-\lambda)(A_D-\lambda)^{-1}f-(T-\lambda)\Gamma(\lambda)Q(\lambda)^{-1}\Gamma(\bar\lambda)^* f
=f.
\end{equation*}
Together with \eqref{function} we find 
\begin{equation*}
(A_N-\lambda)^{-1}f=(A_D-\lambda)^{-1}f-\Gamma(\lambda)Q(\lambda)^{-1}\Gamma(\bar\lambda)^* f
\end{equation*}
for all $\lambda\in\rho(A_D)\cap\rho(A_N)$ and $f\in L^2(\Omega)$, and therefore the resolvent formula
\eqref{resform} is valid.

\vskip 0.3cm\noindent
Up to some small modifications assertion (iii) was proved in \cite{B62}.
\end{proof}

We mention that for $\lambda,\lambda_0\in\rho(A_D)$ the Dirichlet-to-Neumann map is connected with 
the resolvent of $A_D$ via
 \begin{equation*}
 Q(\lambda)=\RE Q(\lambda_0)+\Gamma_{\lambda_0}\bigl((\lambda-\RE\lambda_0)+
(\lambda-\lambda_0)(\lambda-\bar\lambda_0)(A_D-\lambda)^{-1}\bigr)\Gamma_{\lambda_0}.
 \end{equation*}
This follows from the fact that $Q$ is a generalized $Q$-function and Proposition~\ref{formq}.
The following two corollaries collect some properties of the Dirichlet-to-Neumann map
and its inverse. 

\begin{corollary}\label{prop1}
For $\lambda,\lambda_0\in\rho(A_D)$ the Dirichlet-to-Neumann map $Q(\lambda)$ has the following properties.
\begin{enumerate}
 \item [{\rm (i)}] $Q(\lambda)$ is a non-closed unbounded operator in $L^2(\partial\Omega)$ defined
                   on $H^{3/2}(\partial\Omega)$ with $\ran Q(\lambda)\subset H^{1/2}(\partial\Omega)$;
                   
 \item [{\rm (ii)}] $Q(\lambda)-\RE Q(\lambda_0)$ is a non-closed bounded operator in $L^2(\partial\Omega)$ defined
                   on $H^{3/2}(\partial\Omega)$;
 \item [{\rm (iii)}] the closure $\widetilde Q(\lambda)$ of the operator $Q(\lambda)-\RE Q(\lambda_0)$ in 
$L^2(\partial\Omega)$ satisfies 
$$\frac{d}{d\lambda}\,\widetilde Q(\lambda)=\Gamma(\bar\lambda)^*\overline{\Gamma(\lambda)}$$
and $\widetilde Q$ is a $\cL(L^2(\partial\Omega))$-valued
 Nevanlinna function. 
 \end{enumerate}
\end{corollary}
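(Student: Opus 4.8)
The plan is to derive all three statements of Corollary~\ref{prop1} from the results already established, principally Theorem~\ref{bigthm1}, Proposition~\ref{Gammalambda0prop}, Proposition~\ref{formq}, Corollary~\ref{derivcor} and Theorem~\ref{qthmgen1}. For (i), the inclusion $\dom Q(\lambda)=H^{3/2}(\partial\Omega)$ and $\ran Q(\lambda)\subset H^{1/2}(\partial\Omega)$ are immediate from Definition~\ref{dirneu} together with the trace map \eqref{tracemap}. To see that $Q(\lambda)$ is unbounded and non-closed in $L^2(\partial\Omega)$, I would argue by contradiction: if $Q(\lambda)$ were bounded it would extend to an everywhere-defined operator in $\cL(L^2(\partial\Omega))$, hence by Theorem~\ref{bigthm1}(ii) so would $Q(\lambda)^{-1}$ for $\lambda\in\rho(A_D)\cap\rho(A_N)$ (using that $Q(\lambda)^{-1}$ maps $H^{1/2}(\partial\Omega)$ onto $H^{3/2}(\partial\Omega)$), and then \eqref{resform} would force $(A_D-\lambda)^{-1}-(A_N-\lambda)^{-1}$ to have a form incompatible with the smoothing properties of $\Gamma(\lambda)$ and $\Gamma(\bar\lambda)^*$; alternatively, and more cleanly, one invokes the standard fact that the Dirichlet-to-Neumann map is a first-order elliptic pseudodifferential operator on the compact manifold $\partial\Omega$, hence genuinely unbounded in $L^2(\partial\Omega)$, with the non-closedness then following because its domain $H^{3/2}(\partial\Omega)$ is strictly smaller than the domain $H^1(\partial\Omega)$ of its closure. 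Since the $H^2$-framework of the paper does not develop pseudodifferential calculus, the contradiction argument via \eqref{resform} is the route I would actually write.

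For (ii), recall that $Q$ is a generalized $Q$-function of $\{S,A_D,T\}$ by Theorem~\ref{bigthm1}(i), so Proposition~\ref{formq} applies and gives
\begin{equation*}
Q(\lambda)-\RE Q(\lambda_0)=\Gamma_{\lambda_0}^*\bigl((\lambda-\RE\lambda_0)+(\lambda-\lambda_0)(\lambda-\bar\lambda_0)(A_D-\lambda)^{-1}\bigr)\Gamma_{\lambda_0}
\end{equation*}
on $\cG_0=H^{3/2}(\partial\Omega)$. By Proposition~\ref{Gammalambda0prop}(i) the operator $\Gamma_{\lambda_0}$ is bounded from $L^2(\partial\Omega)$ into $L^2(\Omega)$ and the resolvent $(A_D-\lambda)^{-1}$ is bounded in $L^2(\Omega)$, so the right-hand side is a bounded operator on the dense subspace $H^{3/2}(\partial\Omega)$; this is exactly the claim that $Q(\lambda)-\RE Q(\lambda_0)$ is bounded. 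It is not closed because its domain is the non-closed dense subspace $H^{3/2}(\partial\Omega)$ rather than all of $L^2(\partial\Omega)$; equivalently, if it were closed and bounded it would be everywhere defined, contradicting $\dom(Q(\lambda)-\RE Q(\lambda_0))=H^{3/2}(\partial\Omega)\subsetneq L^2(\partial\Omega)$.

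For (iii), since $Q(\lambda)-\RE Q(\lambda_0)$ is bounded on the dense domain $H^{3/2}(\partial\Omega)$ it admits a unique bounded closure $\widetilde Q(\lambda)\in\cL(L^2(\partial\Omega))$, which by the proof of Theorem~\ref{qthmgen1} (specifically \eqref{qtilde}) is precisely the $\cL(\cG)$-valued Nevanlinna function associated with the generalized $Q$-function $Q$; the Nevanlinna property then follows directly from that theorem. The formula for the derivative is a direct application of Corollary~\ref{derivcor}, which gives $\tfrac{d}{d\lambda}\widetilde Q(\lambda)=\Gamma(\bar\lambda)^*\Gamma(\lambda)$ on $H^{3/2}(\partial\Omega)$; since $\Gamma(\bar\lambda)^*\in\cL(L^2(\Omega),L^2(\partial\Omega))$ by Proposition~\ref{Gammalambda0prop}(iii) and $\overline{\Gamma(\lambda)}$ is the bounded extension of $\Gamma(\lambda)$, the product $\Gamma(\bar\lambda)^*\overline{\Gamma(\lambda)}$ is the everywhere-defined bounded extension of $\Gamma(\bar\lambda)^*\Gamma(\lambda)$, and by uniqueness of the closure of the difference quotient one concludes $\tfrac{d}{d\lambda}\widetilde Q(\lambda)=\Gamma(\bar\lambda)^*\overline{\Gamma(\lambda)}$ on all of $L^2(\partial\Omega)$. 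The only genuinely delicate point is the non-closedness assertions in (i) and (ii): the boundedness in (ii) and the operator bounds come for free from Proposition~\ref{formq} and Proposition~\ref{Gammalambda0prop}, but ruling out that $Q(\lambda)$ itself is bounded in (i) requires either an appeal to elliptic regularity on $\partial\Omega$ or the resolvent-formula contradiction sketched above, and I expect packaging that argument cleanly within the paper's stated $H^2$-framework to be the main obstacle.
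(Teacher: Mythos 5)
Parts (ii) and (iii) of your proposal are correct and essentially the paper's own route: boundedness of $Q(\lambda)-\RE Q(\lambda_0)$ comes from Proposition~\ref{formq} applied to the generalized $Q$-function of Theorem~\ref{bigthm1}(i), non-closedness holds because a bounded operator is closed only if its domain is closed, and the derivative formula and the Nevanlinna property of the closure follow from Theorem~\ref{qthmgen1} and Corollary~\ref{derivcor}. The paper indeed dispatches these points with a one-line reference to Section~\ref{genq}.

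The genuine gap is in (i), precisely at the point you yourself flagged as delicate. The contradiction argument you say you would ``actually write'' does not go through. First, from the assumption that $Q(\lambda)$ is bounded it does not follow that $Q(\lambda)^{-1}$ is bounded: a bounded injective operator may well have an unbounded inverse, so the step ``hence by Theorem~\ref{bigthm1}(ii) so would $Q(\lambda)^{-1}$'' is a non sequitur. Second, even granting that both $\overline{Q(\lambda)}$ and $\overline{Q(\lambda)^{-1}}$ were everywhere defined, bounded and mutually inverse, no contradiction can be extracted from \eqref{resform}: the operators $\overline{\Gamma(\lambda)}$ and $\Gamma(\bar\lambda)^*$ are themselves compact (the Poisson operator gains regularity), so the compactness, or Schatten-class membership, of $\overline{\Gamma(\lambda)}\,\overline{Q(\lambda)^{-1}}\,\Gamma(\bar\lambda)^*$ is perfectly consistent with $\overline{Q(\lambda)^{-1}}$ being boundedly invertible. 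The paper's actual argument is the one you set aside: it first proves in Corollary~\ref{prop2}(ii), using the Lions--Magenes/Grubb result that $Q(\lambda)$ extends to an isomorphism $\widehat Q(\lambda)\colon H^1(\partial\Omega)\to L^2(\partial\Omega)$, that $\overline{Q(\lambda)^{-1}}$ maps $L^2(\partial\Omega)$ boundedly into $H^1(\partial\Omega)$ and is therefore compact by the compact embedding $H^1(\partial\Omega)\hookrightarrow L^2(\partial\Omega)$. Unboundedness of $Q(\lambda)$ then follows because a compact injective operator on an infinite-dimensional space has no bounded inverse, and non-closedness follows because $Q(\lambda)^{-1}$ is bounded with the non-closed domain $H^{1/2}(\partial\Omega)$, hence not closed, while the inverse of a closed injective operator is closed. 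Note also that unboundedness alone would not give non-closedness, so the two assertions need the two separate deductions just described; the appeal to boundary elliptic regularity is unavoidable in the paper's framework, and the resolvent-formula shortcut you preferred is the one that fails.
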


\begin{proof}
Besides the statement that $Q(\lambda)$ is a non-closed unbounded operator the assertions follow from the fact that 
$Q$ is a generalized $Q$-function and the results in Section~\ref{genq}. In Corollary~\ref{prop2} it will turn out that $\overline{Q(\lambda)^{-1}}$
is a compact operator and that $Q(\lambda)^{-1}$ is not closed. This implies that $\overline{Q(\lambda)}$ and
$Q(\lambda)$ are unbounded and that $Q(\lambda)$ is not closed. 
\end{proof}

\begin{corollary}\label{prop2}
For $\lambda\in\rho(A_D)\cap\rho(A_N)$ the inverse $Q(\lambda)^{-1}$ of the Dirichlet-to-Neumann map $Q(\lambda)$ has the following properties.
\begin{enumerate}
 \item [{\rm (i)}] $Q(\lambda)^{-1}$ is a non-closed bounded operator in $L^2(\partial\Omega)$ defined
                    on $H^{1/2}(\partial\Omega)$ with $\ran Q(\lambda)^{-1}=H^{3/2}(\partial\Omega)$;
 \item [{\rm (ii)}] the closure $\overline{Q(\lambda)^{-1}}$ is a compact operator in $L^2(\partial\Omega)$;
 \item [{\rm (iii)}] the function $\lambda\mapsto -\overline{Q(\lambda)^{-1}}$ is a $\cL(L^2(\partial\Omega))$-valued
 Nevanlinna function.
 \end{enumerate}
\end{corollary}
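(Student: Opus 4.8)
The plan is to realise $\overline{Q(\lambda)^{-1}}$ as the Dirichlet trace of a bounded "Neumann Poisson operator" and to read off all three statements from this together with Green's identity \eqref{greenid}. Fix $\lambda_0\in\rho(A_D)\cap\rho(A_N)$. Parallel to \eqref{Gammalambda0} I would introduce the operator $\Gamma_{N,\lambda_0}$ which, initially on $H^{1/2}(\partial\Omega)$, assigns to $\psi$ the unique $g\in\cN_{\lambda_0}(T)$ with $-\tfrac{\partial g}{\partial\nu}|_{\partial\Omega}=\psi$; this uses the decomposition $\dom T=\dom A_N\,\dot+\,\cN_{\lambda_0}(T)$, valid for $\lambda_0\in\rho(A_N)$ exactly as in \eqref{decoall}, since by Proposition~\ref{opprop} $A_N$ is a selfadjoint restriction of $T$. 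Running the arguments of Lemma~\ref{gamlem} and Proposition~\ref{Gammalambda0prop} with $A_N$ in place of $A_D$ and the Dirichlet and Neumann traces interchanged, one obtains the field $\Gamma_N(\lambda)=(I+(\lambda-\lambda_0)(A_N-\lambda)^{-1})\Gamma_{N,\lambda_0}$, $\lambda\in\rho(A_N)$, with $\Gamma_N(\lambda)\psi=g_\lambda\in\cN_\lambda(T)$, $-\tfrac{\partial g_\lambda}{\partial\nu}|_{\partial\Omega}=\psi$, one checks that $\Gamma_N(\bar\lambda)^*$ is everywhere defined and bounded from $L^2(\Omega)$ into $L^2(\partial\Omega)$ (it sends $(A_N-\lambda)f$ to $\pm f|_{\partial\Omega}$ for $f\in\dom A_N$), and hence $\Gamma_N(\lambda)$ has a bounded closure $\overline{\Gamma_N(\lambda)}\in\cL(L^2(\partial\Omega),L^2(\Omega))$ depending holomorphically on $\lambda\in\rho(A_N)$. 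By the definition of the Dirichlet-to-Neumann map and of $\Gamma_N(\lambda)$ one has, for $\psi\in H^{1/2}(\partial\Omega)$ and $\lambda\in\rho(A_D)\cap\rho(A_N)$,
\[
 Q(\lambda)^{-1}\psi=\bigl(\Gamma_N(\lambda)\psi\bigr)\big|_{\partial\Omega}.
\]

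The analytic heart of the proof, and the one step that genuinely leaves the $H^2(\Omega)$-framework used so far, is the elliptic a priori estimate for the Neumann problem: for $\lambda\in\rho(A_N)$ the operator $\Gamma_N(\lambda)$ extends to a bounded operator from $L^2(\partial\Omega)$ into $H^{3/2}(\Omega)$, that is $\Vert g_\lambda\Vert_{H^{3/2}(\Omega)}\le C\Vert\psi\Vert_{L^2(\partial\Omega)}$ for the solution of $\cL g_\lambda=\lambda g_\lambda$, $-\tfrac{\partial g_\lambda}{\partial\nu}|_{\partial\Omega}=\psi$; this is classical, see e.g. \cite{LM72,W}. Granting it, the boundedness of the Dirichlet trace $H^{3/2}(\Omega)\to H^1(\partial\Omega)$ gives $\Vert Q(\lambda)^{-1}\psi\Vert_{H^1(\partial\Omega)}\le C\Vert\psi\Vert_{L^2(\partial\Omega)}$ for all $\psi\in H^{1/2}(\partial\Omega)$; in particular $Q(\lambda)^{-1}$ is bounded in $L^2(\partial\Omega)$ and its closure is the everywhere defined bounded operator $\psi\mapsto(\overline{\Gamma_N(\lambda)}\psi)|_{\partial\Omega}$, which maps $L^2(\partial\Omega)$ into $H^1(\partial\Omega)$. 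Since $\partial\Omega$ is a compact manifold, the embedding $H^1(\partial\Omega)\hookrightarrow L^2(\partial\Omega)$ is compact (Rellich), so $\overline{Q(\lambda)^{-1}}$ is compact in $L^2(\partial\Omega)$, which is (ii). For (i): the domain and range of $Q(\lambda)^{-1}$ are as stated by Theorem~\ref{bigthm1}(ii), boundedness was just shown, and $Q(\lambda)^{-1}$ cannot be closed because a bounded closed operator has closed domain while $H^{1/2}(\partial\Omega)$ is dense and proper in $L^2(\partial\Omega)$.

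For (iii) I would verify directly that $R(\lambda):=-\overline{Q(\lambda)^{-1}}\in\cL(L^2(\partial\Omega))$ is a Nevanlinna function in the sense of \eqref{imqpos}. Holomorphy on $\dC\backslash\dR$ follows from holomorphy of $\overline{\Gamma_N(\lambda)}$. For the algebraic relations I would repeat the Green's identity computation of the proof of Theorem~\ref{bigthm1}(i), with Dirichlet and Neumann data interchanged: for $\psi,\chi\in H^{1/2}(\partial\Omega)$, writing $g_\lambda=\Gamma_N(\lambda)\psi$, $h_\mu=\Gamma_N(\mu)\chi$ and using $Q(\lambda)^{-1}\psi=g_\lambda|_{\partial\Omega}$, $-\tfrac{\partial g_\lambda}{\partial\nu}|_{\partial\Omega}=\psi$ and $(Tg_\lambda,h_\mu)_\Omega=(g_\lambda,Th_\mu)_\Omega$, \eqref{greenid} yields $((R(\lambda)-R(\mu)^*)\psi,\chi)_{\partial\Omega}=(\lambda-\bar\mu)(g_\lambda,h_\mu)_\Omega=(\lambda-\bar\mu)(\Gamma_N(\mu)^*\overline{\Gamma_N(\lambda)}\psi,\chi)_{\partial\Omega}$; since all operators are bounded this extends by continuity to $R(\lambda)-R(\mu)^*=(\lambda-\bar\mu)\Gamma_N(\mu)^*\overline{\Gamma_N(\lambda)}$ on all of $L^2(\partial\Omega)$. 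Taking $\mu=\bar\lambda$ gives $R(\bar\lambda)=R(\lambda)^*$, and taking $\mu=\lambda$ gives $\IM R(\lambda)=(\IM\lambda)\,\overline{\Gamma_N(\lambda)}^*\overline{\Gamma_N(\lambda)}$, so $\IM R(\lambda)/\IM\lambda\ge0$; hence $-\overline{Q(\lambda)^{-1}}$ is Nevanlinna. (Equivalently, this identity says that $-Q(\lambda)^{-1}$ is a generalized $Q$-function of the triple $\{S,A_N,T\}$ with $\cG=L^2(\partial\Omega)$, $\cG_0=H^{1/2}(\partial\Omega)$ and field $\Gamma_N$, so (iii) also follows from Theorem~\ref{qthmgen1} and Proposition~\ref{formq} once one knows, from (i), that the associated symmetric constant $\RE(-Q(\lambda_0)^{-1})$ is bounded — a bounded selfadjoint constant added to a Nevanlinna function is again Nevanlinna.)

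I expect the boundedness of $Q(\lambda)^{-1}$ in $L^2(\partial\Omega)$ to be the only real obstacle: it is exactly where the elliptic regularity estimate for the Neumann problem with merely $L^2(\partial\Omega)$ boundary data is indispensable, since the $H^2(\Omega)$-based reasoning used earlier only produces boundedness of $Q(\lambda)^{-1}$ as a map $H^{1/2}(\partial\Omega)\to H^{3/2}(\partial\Omega)$, which does not by itself yield a bounded — let alone compact — operator in $L^2(\partial\Omega)$. Everything else is Rellich's theorem and a repetition of arguments already carried out in Section~\ref{genq} and in the proof of Theorem~\ref{bigthm1}.
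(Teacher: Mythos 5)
Your argument is correct, and it reaches the same analytic core as the paper's proof by a slightly different packaging. The paper handles (ii) by quoting from \cite{LM72} and \cite[Theorem~2.1]{G71} that $Q(\lambda):H^{3/2}(\partial\Omega)\to H^{1/2}(\partial\Omega)$ extends to an isomorphism $\widehat Q(\lambda):H^{1}(\partial\Omega)\to L^2(\partial\Omega)$, so that $\overline{Q(\lambda)^{-1}}=\widehat Q(\lambda)^{-1}$ maps $L^2(\partial\Omega)$ boundedly into $H^1(\partial\Omega)$ and Rellich gives compactness; you instead build the Neumann Poisson operator $\Gamma_N(\lambda)$ and invoke the regularity of the Neumann problem with $L^2(\partial\Omega)$ data (solution in $H^{3/2}(\Omega)$) plus the trace theorem to obtain exactly the same mapping property $L^2(\partial\Omega)\to H^1(\partial\Omega)$. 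These are equivalent formulations of the same classical shift property, so your diagnosis that this is the one step leaving the $H^2(\Omega)$-framework matches the paper, which likewise imports it as a citation. Items (i) and (ii) are then identical in substance (the paper also derives (i) from (ii), non-closedness coming from density of the domain). Where you genuinely add value is (iii): the paper dismisses it with ``follows from Theorem~\ref{qthmgen1} and general properties of the Nevanlinna class,'' which glosses over the fact that $Q$ itself is not Nevanlinna (only $\widetilde Q=Q-\RE Q(\lambda_0)$ is, and $-\widetilde Q(\lambda)^{-1}\neq -\overline{Q(\lambda)^{-1}}$ since the constant $\RE Q(\lambda_0)$ is unbounded); your direct Green's-identity verification of $R(\lambda)-R(\mu)^*=(\lambda-\bar\mu)\Gamma_N(\mu)^*\overline{\Gamma_N(\lambda)}$ -- equivalently, the observation that $-Q^{-1}$ is a generalized $Q$-function of $\{S,A_N,T\}$ with \emph{bounded} real part -- is the honest way to make this rigorous. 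The only cosmetic point worth tightening is the holomorphy of $\lambda\mapsto\overline{Q(\lambda)^{-1}}$: rather than composing the trace with $\overline{\Gamma_N(\lambda)}$ (the trace is not continuous from $L^2(\Omega)$), deduce it from the identity $R(\lambda)=R(\lambda_0)^*+(\lambda-\bar\lambda_0)\Gamma_N(\lambda_0)^*\overline{\Gamma_N(\lambda)}$ together with holomorphy of $\lambda\mapsto\overline{\Gamma_N(\lambda)}=(I+(\lambda-\lambda_0)(A_N-\lambda)^{-1})\overline{\Gamma_{N,\lambda_0}}$.
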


\begin{proof}
It is clear that (i) is an immediate consequence of (ii). Statement (iii) follows from Theorem~\ref{qthmgen1} and 
general properties of the Nevanlinna class.
Assertion (ii) is essentially a consequence of the classical results in \cite{LM72}, see also
\cite[Theorem~2.1]{G71}. Namely, for $\lambda\in\rho(A_D)\cap\rho(A_N)$ the operator
$Q(\lambda):H^{3/2}(\partial\Omega)\rightarrow H^{1/2}(\partial\Omega)$ is an isomorphism and 
can be extended to an isomorphism $\widehat Q(\lambda):H^{1}(\partial\Omega)\rightarrow L^2(\partial\Omega)$ 
which acts as in \eqref{dnmap}. Therefore $Q(\lambda)^{-1}\subset \widehat Q(\lambda)^{-1}$ is a densely defined
operator in $L^2(\partial\Omega)$ which is bounded as an operator in $H^1(\partial\Omega)$
and hence also bounded when considered as an operator in $L^2(\partial\Omega)$. Its closure $\overline{Q(\lambda)^{-1}}$
in $L^2(\partial\Omega)$ 
is a bounded everywhere defined operator in $L^2(\partial\Omega)$ with values in $H^1(\partial\Omega)$
and coincides with $\widehat Q(\lambda)^{-1}$. 
As $H^1(\partial\Omega)$ is compactly embedded in $L^2(\partial\Omega)$ it follows that $\overline{Q(\lambda)^{-1}}$
is a compact operator in $L^2(\partial\Omega)$.
\end{proof}

The next corollary is a simple consequence of Theorem~\ref{bigthm1} 
for the case that the difference of the resolvents is a trace class operator.

\begin{corollary}
Let the assumptions be as in Theorem~\ref{bigthm1}, let $\widetilde Q$ be the Nevanlinna function
from Corollary~\ref{prop1} and suppose, in addition, $n=2$. Then 
\begin{equation}\label{traceresform}
\tr\bigl((A_D-\lambda)^{-1}-(A_N-\lambda)^{-1}\bigr)=\tr\left(\overline{Q(\lambda)^{-1}}\,\,
\frac{d}{d\lambda}\,\widetilde Q(\lambda)\right)
\end{equation}
holds for all $\lambda\in\rho(A_D)\cap\rho(A_N)$. 
\end{corollary}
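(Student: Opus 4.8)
The plan is to start from the Krein type resolvent formula \eqref{resform}, pass to the closures of the (in general non-closed) factors, and then move the outer factor $\Gamma(\lambda)$ to the other side of the trace so that it is computed in $L^2(\partial\Omega)$ instead of $L^2(\Omega)$. By Proposition~\ref{Gammalambda0prop} the operator $\Gamma(\bar\lambda)^*$ belongs to $\cL(L^2(\Omega),L^2(\partial\Omega))$ and the closure $\overline{\Gamma(\lambda)}$ belongs to $\cL(L^2(\partial\Omega),L^2(\Omega))$, and by Corollary~\ref{prop2}~(ii) the closure $\overline{Q(\lambda)^{-1}}$ belongs to $\cL(L^2(\partial\Omega))$. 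Since $\ran\Gamma(\bar\lambda)^*\subset H^{1/2}(\partial\Omega)=\dom Q(\lambda)^{-1}$ and $\ran Q(\lambda)^{-1}=H^{3/2}(\partial\Omega)=\dom\Gamma(\lambda)$, replacing $Q(\lambda)^{-1}$ and $\Gamma(\lambda)$ by their closures does not change the composition, so that
\begin{equation*}
(A_D-\lambda)^{-1}-(A_N-\lambda)^{-1}=\overline{\Gamma(\lambda)}\,\overline{Q(\lambda)^{-1}}\,\Gamma(\bar\lambda)^*
\end{equation*}
as everywhere defined operators on $L^2(\Omega)$; by Theorem~\ref{bigthm1}~(iii) with $p=1$ (so that $2p+1=3>2=n$) this operator lies in $\sS_1(L^2(\Omega))$.

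I would next show that the operator $\overline{Q(\lambda)^{-1}}\,\Gamma(\bar\lambda)^*\,\overline{\Gamma(\lambda)}$ in $L^2(\partial\Omega)$ is also trace class. Indeed, by Proposition~\ref{Gammalambda0prop}~(iii) together with the elliptic regularity estimate for $(A_D-\lambda)^{-1}$, the operator $\Gamma(\bar\lambda)^*$ maps $L^2(\Omega)$ continuously into $H^{1/2}(\partial\Omega)$, while the restriction of $\overline{Q(\lambda)^{-1}}$ to $H^{1/2}(\partial\Omega)$ coincides with $Q(\lambda)^{-1}$ and maps $H^{1/2}(\partial\Omega)$ continuously onto $H^{3/2}(\partial\Omega)$ by Corollary~\ref{prop2}~(i); hence $\overline{Q(\lambda)^{-1}}\,\Gamma(\bar\lambda)^*\,\overline{\Gamma(\lambda)}$ maps $L^2(\partial\Omega)$ continuously into $H^{3/2}(\partial\Omega)$. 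As $\partial\Omega$ is a compact manifold of dimension $n-1=1$, the embedding $H^{3/2}(\partial\Omega)\hookrightarrow L^2(\partial\Omega)$ belongs to $\sS_1$, and since $\sS_1$ is an operator ideal it follows that $\overline{Q(\lambda)^{-1}}\,\Gamma(\bar\lambda)^*\,\overline{\Gamma(\lambda)}\in\sS_1(L^2(\partial\Omega))$. Now both products $\overline{\Gamma(\lambda)}\cdot\bigl(\overline{Q(\lambda)^{-1}}\Gamma(\bar\lambda)^*\bigr)$ and $\bigl(\overline{Q(\lambda)^{-1}}\Gamma(\bar\lambda)^*\bigr)\cdot\overline{\Gamma(\lambda)}$ are trace class, and the rearrangement rule for the trace — valid for bounded operators between two Hilbert spaces once both products are trace class, e.g. by Lidskii's theorem since the two products have the same nonzero eigenvalues with algebraic multiplicities — gives
\begin{equation*}
\tr\bigl((A_D-\lambda)^{-1}-(A_N-\lambda)^{-1}\bigr)=\tr\bigl(\overline{Q(\lambda)^{-1}}\,\Gamma(\bar\lambda)^*\,\overline{\Gamma(\lambda)}\bigr).
\end{equation*}

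Finally, Corollary~\ref{prop1}~(iii) identifies $\Gamma(\bar\lambda)^*\,\overline{\Gamma(\lambda)}$ with $\tfrac{d}{d\lambda}\widetilde Q(\lambda)$, and inserting this into the last display yields \eqref{traceresform} for all $\lambda\in\rho(A_D)\cap\rho(A_N)$. The manipulations with the three bounded factors and the Sobolev bookkeeping are routine; the only genuinely delicate point is the justification of the trace rearrangement, which is why one has to verify trace-class membership of the operator on $L^2(\partial\Omega)$ independently — the factor $\overline{Q(\lambda)^{-1}}$ being merely compact and $\tfrac{d}{d\lambda}\widetilde Q(\lambda)$ merely bounded — and this is exactly the step where the hypothesis $n=2$, i.e. $\dim\partial\Omega=1$, enters.
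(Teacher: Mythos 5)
Your proof is correct and follows essentially the same route as the paper: rewrite \eqref{resform} with the closures $\overline{\Gamma(\lambda)}$ and $\overline{Q(\lambda)^{-1}}$, invoke Theorem~\ref{bigthm1}~(iii) with $p=1$ for trace-class membership, cyclically permute under the trace, and identify $\Gamma(\bar\lambda)^*\overline{\Gamma(\lambda)}$ with $\tfrac{d}{d\lambda}\widetilde Q(\lambda)$ via Corollaries~\ref{derivcor} and~\ref{prop1}. The only difference is that you make explicit, via the mapping into $H^{3/2}(\partial\Omega)$ and the $\sS_1$-embedding for a one-dimensional boundary, the trace-class property of the rearranged product that the paper subsumes under ``well known properties of the trace'' with a reference to \cite{GK69} --- a worthwhile clarification, since trace-class membership of $GK$ alone does not in general transfer to $KG$.
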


\begin{proof}
The resolvent formula \eqref{resform} can be written in the form
\begin{equation}\label{resform234}
(A_D-\lambda)^{-1}-(A_N-\lambda)^{-1}=\overline{\Gamma(\lambda)}\,\overline{Q(\lambda)^{-1}}\,\Gamma(\bar\lambda)^*,
\end{equation}
where the closures $\overline{\Gamma(\lambda)}$ and $\overline{Q(\lambda)^{-1}}$ are everywhere defined bounded operators,
cf. Corollary~\ref{prop2}~(ii). In the case $n=2$ it follows from Theorem~\ref{bigthm1}~(iii) that \eqref{resform234}
is a trace class operator and from Corollaries~\ref{derivcor}, \ref{prop1}~(iii) and well known properties 
of the trace of bounded operators (see \cite{GK69}) we conclude \eqref{traceresform}.
\end{proof}

\section{Coupling of elliptic differential operators}\label{cellops}

In this section we study the uniformly elliptic second order differential expression $\cL$
from \eqref{cl} on two different domains and a coupling of the associated Dirichlet operators.
More precisely, let $\Omega\subset\dR^n$ be a simply connected bounded domain with $C^\infty$-boundary $\cC:=\partial\Omega$
and let $\Omega^\prime=\dR^n\backslash\overline\Omega$ be the complement of the closure of $\Omega$ in $\dR^n$.
Clearly, $\Omega^\prime$ is an unbounded domain with the compact $C^\infty$-boundary 
$\partial\Omega^\prime=\cC$. Let again $\cL$ be given by
\begin{equation}\label{cl2}
\cL h=-\sum_{j,k=1}^n  \frac{\partial}{\partial x_j}\, a_{jk} \frac{\partial h}{\partial x_k} + ah
\end{equation}
with bounded coefficients $a_{jk}\in C^\infty(\dR^n)$ satisfying $a_{jk}(x)=\overline{a_{kj}(x)}$ 
for all $x\in\dR^n$ and $j,k=1,\dots,n$, the function $a\in L^\infty(\dR^n)$ is real valued and 
suppose that $\cL$ is uniformly elliptic, cf. \eqref{elliptic}. The restriction of $\cL$ on functions $f$ defined 
on $\Omega$ or functions $f^\prime$ defined on $\Omega^\prime$ will be denoted by $\cL_\Omega$ and $\cL_{\Omega^\prime}$, respectively. Then
it is clear that the differential expressions $\cL_\Omega$ and $\cL_{\Omega^\prime}$ are of the type
as in Section~\ref{ellops}. 

In the following we will usually denote functions defined on $\dR^n$ by $h$ or $k$, 
and we denote functions defined on $\Omega$ or $\Omega^\prime$
by $f,g$ or $f^\prime,g^\prime$, respectively. The scalar products of $L^2(\Omega)$ and $L^2(\Omega^\prime)$
are indexed with $\Omega$ and $\Omega^\prime$, respectively, whereas the scalar product of $L^2(\dR^n)$ is just
denoted by $(\cdot,\cdot)$. For the trace of a function $f\in H^2(\Omega)$ and $f^\prime\in H^2(\Omega^\prime)$
we write $f|_\cC$ and $f^\prime|_\cC$, and the trace of the conormal derivatives are
\begin{equation}\label{cono}
\frac{\partial f}{\partial\nu}\Bigl|_\cC=\sum_{j,k=1}^n a_{jk}n_j\,\frac{\partial f}{\partial x_k}\Bigl|_\cC\quad\text{and}\quad
\frac{\partial f^\prime}{\partial\nu^\prime}\Bigl|_\cC=\sum_{j,k=1}^n a_{jk}n^\prime_j\,
\frac{\partial f}{\partial x_k}\Bigl|_\cC;
\end{equation}
here $n(x)=(n_1(x),\dots,n_n(x))^\top$ and $n^\prime(x)=-n(x)$ are the unit vectors at the point $x\in\cC=\partial\Omega=\partial\Omega^\prime$ pointing out
of $\Omega$ and $\Omega^\prime$, respectively. Note also that the coefficients $a_{jk}$ in \eqref{cono}
are the restrictions of the coefficients in \eqref{cl2} onto $\Omega$ and $\Omega^\prime$, respectively. 
The Dirichlet operators 
\begin{equation*}
\begin{split}
A_\Omega f&=\cL_\Omega f,\qquad\,\,\,\, \,\,\dom A_\Omega=\bigl\{f\in H^2(\Omega):f|_\cC=0\bigr\},\\
A_{\Omega^\prime}f^\prime&=\cL_{\Omega^\prime}f^\prime,\qquad \dom A_{\Omega^\prime}=\bigl\{f^\prime\in H^2(\Omega^\prime):f^\prime|_\cC=0\bigr\},
\end{split}
\end{equation*}
are selfadjoint operators in $L^2(\Omega)$ and $L^2(\Omega^\prime)$, respectively. Hence the 
orthogonal sum 
\begin{equation}\label{aschro}
A=\begin{pmatrix} A_\Omega & 0 \\ 0 & A_{\Omega^\prime}\end{pmatrix},\qquad \dom A=\dom A_\Omega\oplus \dom A_{\Omega^\prime},
\end{equation}
is a selfadjoint operator in $L^2(\dR^n)=L^2(\Omega)\oplus L^2(\Omega^\prime)$. Observe that
\begin{equation}\label{aschroe}
\begin{split}
A(f\oplus f^\prime)&=\cL (f\oplus f^\prime)=\cL_\Omega f\oplus \cL_{\Omega^\prime}f^\prime,\\ 
\dom A&=\bigl\{f\oplus f^\prime \in H^2(\Omega)\oplus H^2(\Omega^\prime):f|_\cC=0=f^\prime|_\cC\bigr\},
\end{split}
\end{equation}
and that $A$ 
is not a usual second order elliptic differential operator on $\dR^n$ since for a function 
$f\oplus f^\prime\in \dom A$ the traces of the conormal derivatives $\tfrac{\partial f}{\partial\nu}|_\cC$
and $-\tfrac{\partial f^\prime}{\partial\nu^\prime}|_\cC$ at the boundary $\cC$ of the domains $\Omega$ and $\Omega^\prime$ 
in general do not coincide. 

Besides the operator $A$ we consider the usual selfadjoint operator associated to $\cL$ in $L^2(\dR^n)$ defined
by
\begin{equation}\label{atildeschroe}
\widetilde A h=\cL h,\qquad h\in\dom\widetilde A=H^2(\dR^n),
\end{equation}
and our aim is to prove a formula for the difference of the resolvents of $\widetilde A$ and $A$ 
with the help of a generalized $Q$-function in a similar form as in the
previous section.
 
The following theorem indicates how $S$ and $T$ in the triple $\{S,A,T\}$ for the definition of a generalized 
$Q$-function can be chosen. 

\begin{theorem}\label{opscoup}
The operator 
\begin{equation}
S h=\cL h,\quad \dom S=\bigl\{h=f\oplus f^\prime \in H^2(\dR^n):f|_\cC=0=f^\prime|_\cC\bigr\},
\end{equation}
is a densely defined closed 
symmetric operator in $L^2(\dR^n)$ with infinite deficiency indices $n_\pm(S)$. The operator
\begin{equation}
\begin{split}
T(f\oplus f^\prime)&=\cL(f\oplus f^\prime),\\
\dom T&=\bigl\{f\oplus f^\prime \in H^2(\Omega)\oplus H^2(\Omega^\prime):f|_\cC=f^\prime|_\cC\bigr\},
\end{split}
\end{equation}
is not closed as an operator in $L^2(\dR^n)$ and $T$ satisfies $\overline T=S^*$ and $T^*=S$. Furthermore,
the selfadjoint operators $A$ and $\widetilde A$ in \eqref{aschro}, \eqref{aschroe} and \eqref{atildeschroe} are 
extensions of $S$ and restrictions of $T$.
\end{theorem}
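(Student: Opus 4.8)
The plan is to prove the three assertions of Theorem~\ref{opscoup} in a way that parallels (and in fact subsumes) the corresponding statements of Proposition~\ref{opprop} for a single domain. First I would identify $S$ as the minimal operator of $\cL$ on $\dR^n$ with the extra Dirichlet-type vanishing condition on $\cC$. Concretely, I would note that $\dom S$ contains $C_0^\infty(\Omega)\oplus C_0^\infty(\Omega^\prime)$, which is dense in $L^2(\dR^n)$, so $S$ is densely defined; symmetry of $S$ is immediate from Green's identity \eqref{greenid} applied separately on $\Omega$ and on $\Omega^\prime$, since for $h=f\oplus f^\prime$ and $k=g\oplus g^\prime$ in $\dom S$ all four boundary terms vanish because the traces $f|_\cC,f^\prime|_\cC,g|_\cC,g^\prime|_\cC$ are zero. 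To see that $S$ is closed and to identify $S^*$, I would pass through $T$: the strategy is to prove $T^*=S$ directly, which gives at once that $S$ is closed (being an adjoint) and, after checking $T$ is densely defined, that $\overline T=T^{**}=S^*$.

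The computation $T^*=S$ is the technical core. The inclusion $S\subset T^*$ follows again from \eqref{greenid}: for $h=f\oplus f^\prime\in\dom S$ and $k=g\oplus g^\prime\in\dom T$ one has $(Th,k)-(h,Tk)=\bigl(\tfrac{\partial f}{\partial\nu}|_\cC,g|_\cC\bigr)_\cC-\bigl(\tfrac{\partial f}{\partial\nu}|_\cC,g^\prime|_\cC\bigr)_\cC+\bigl(f|_\cC,\cdots\bigr)_\cC$ type terms, all of which vanish because $f|_\cC=f^\prime|_\cC=0$ while $g|_\cC=g^\prime|_\cC$. For the reverse inclusion $T^*\subset S$, let $h=f\oplus f^\prime$ be in $\dom T^*$; using test functions $k=g\oplus g^\prime$ with $g\in C_0^\infty(\Omega)$, $g^\prime=0$ (and symmetrically) one sees that $\cL f\in L^2(\Omega)$ and $\cL f^\prime\in L^2(\Omega^\prime)$ in the distributional sense, and then elliptic regularity up to the boundary forces $f\in H^2(\Omega)$, $f^\prime\in H^2(\Omega^\prime)$ — this is where I would invoke the classical regularity theory (e.g. \cite{LM72}, or cite Proposition~\ref{opprop} and its single-domain analogue). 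Knowing $h\in H^2(\Omega)\oplus H^2(\Omega^\prime)$, Green's identity \eqref{greenid} is now available for $h$ as well, and plugging it back into the defining relation $(Th,k)=(h,Tk)$ for all $k\in\dom T$ yields, after varying the boundary data, the two conditions $f|_\cC=f^\prime|_\cC=0$; hence $h\in\dom S$. I expect this regularity-plus-boundary-term bookkeeping to be the main obstacle, precisely because $T$ is not closed, so one must be careful that the boundary pairings are only used in the $H^2$ setting where \eqref{greenid} is literally valid.

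That $T$ is densely defined is clear since $\dom T\supset H^2(\dR^n)\supset C_0^\infty(\dR^n)$. That $T$ is not closed follows from $\overline T=S^*$ together with $\dom T\subsetneq\dom S^*$: indeed $S^*$, being the maximal operator, acts on all $h=f\oplus f^\prime$ with $f,f^\prime$ merely $L^2$ and $\cL f,\cL f^\prime\in L^2$ subject to $f|_\cC=f^\prime|_\cC$ in a weak trace sense, and such $h$ need not lie in $H^2(\Omega)\oplus H^2(\Omega^\prime)$; alternatively one exhibits a concrete non-$H^2$ element of $\dom S^*$. Finally, the statement that $A$ and $\widetilde A$ are extensions of $S$ and restrictions of $T$ is a direct inspection of the domains \eqref{aschroe} and \eqref{atildeschroe}: $\dom S\subset\dom A\subset\dom T$ because functions in $\dom A$ have $f|_\cC=f^\prime|_\cC=0$ (in particular equal traces), and $\dom S\subset\dom\widetilde A=H^2(\dR^n)\subset\dom T$ because an $H^2(\dR^n)$ function restricts to $H^2$ pieces with matching traces across $\cC$; that $A$ and $\widetilde A$ act as $\cL$ on their domains is built into the definitions, so $S\subset A\subset T$ and $S\subset\widetilde A\subset T$. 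Infiniteness of $n_\pm(S)$ follows since $\ker(S^*\mp i)$ contains the infinite-dimensional space of solutions of $\cL u=\pm i u$ with equal traces on $\cC$ — concretely, $\cN_{\pm i}(T)$ is infinite-dimensional because the trace map onto $\cC$ is surjective with infinite-dimensional range.
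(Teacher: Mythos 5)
Your overall architecture (prove $T^*=S$ first, then read off closedness of $S$ and $\overline T=T^{**}=S^*$) matches the paper's, and your treatment of density, symmetry, $S\subset T^*$, the inclusions $S\subset A,\widetilde A\subset T$, and the infinite deficiency indices is fine. But the technical core, the inclusion $T^*\subset S$, contains a genuine gap. You argue: testing against $g\oplus 0$ with $g\in C_0^\infty(\Omega)$ shows $\cL f\in L^2(\Omega)$ distributionally, ``and then elliptic regularity up to the boundary forces $f\in H^2(\Omega)$.'' This last step is false. The maximal domain $\{f\in L^2(\Omega):\cL f\in L^2(\Omega)\}$ strictly contains $H^2(\Omega)$ — that is precisely the content of Proposition~\ref{opprop} (where $\dom S^*$ is the maximal domain, $\dom T=H^2(\Omega)$, and $T$ is \emph{not} closed), and it is the whole reason the theorem you are proving has any content. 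Boundary $H^2$-regularity requires control of the boundary data (e.g.\ $\cL u\in L^2$ together with $u|_{\partial\Omega}\in H^{3/2}$), which at this stage you do not have: a maximal-domain element only has traces in $H^{-1/2}$ and $H^{-3/2}$. You explicitly forbid yourself the extended Green's identity in negative Sobolev spaces, so you cannot extract the boundary conditions before establishing regularity, and you cannot establish regularity before extracting the boundary conditions; the argument is circular as written.

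The paper sidesteps this entirely with a short abstract argument you should adopt: since $T$ extends the two selfadjoint operators $A$ and $\widetilde A$, one has $T^*\subset A^*=A$ and $T^*\subset\widetilde A^*=\widetilde A$, hence $\dom T^*\subset\dom A\cap\dom\widetilde A$. Membership in $\dom\widetilde A=H^2(\dR^n)$ gives the $H^2$ regularity of both pieces for free, and membership in $\dom A$ gives the vanishing traces, so $\dom T^*\subset\dom S$ with no elliptic regularity theory needed. A secondary, smaller gap: you derive non-closedness of $T$ from $\dom T\subsetneq\dom S^*$ but never actually produce an element of $\dom S^*\setminus\dom T$ (solutions of $\cL u=\lambda u$ with common boundary trace in $H^{-1/2}(\cC)\setminus H^{3/2}(\cC)$ would do, but this requires the solvability theory for rough boundary data that you have not set up). The paper instead gives a self-contained argument: assuming $T$ closed, it computes the orthogonal complements of two subspaces $\cM$, $\cN$ built from the graph of $T$ and the boundary maps, shows $\cM^\bot+\cN^\bot$ would be closed, and invokes Kato's theorem to conclude $\cM+\cN=L^2\oplus L^2\oplus(H^{3/2}(\cC)\times H^{1/2}(\cC))$ is closed — a contradiction. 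Either route can be made to work, but as it stands your sketch of this point is an assertion, not a proof.
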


\begin{proof}
The operator $S$ is a restriction of the selfadjoint operator $A$ and hence $S$ is symmetric. The fact that
$\dom S$ is dense follows, e.g., from the fact that $H_0^2(\Omega)$ and $H^2_0(\Omega^\prime)$ are dense 
subspaces of $L^2(\Omega)$ and $L^2(\Omega^\prime)$, respectively, cf. Proposition~\ref{opprop}, and 
$$H_0^2(\Omega)\oplus H^2_0(\Omega^\prime)\subset\dom S.$$
Since for any function $h\in H^2(\dR^n)$ decomposed as $h=f\oplus f^\prime$, where $f\in H^2(\Omega)$, $f^\prime\in H^2(\Omega^\prime)$,
we have $f\vert_\cC=f^\prime\vert_\cC\in H^{3/2}(\cC)$ it follows that $\widetilde A$ is an extension of $S$ 
and a restriction of the operator $T$. Moreover, $S\subset A\subset T$ is obvious.

Let us verify that $S=T^*$ holds. In particular this implies that $S$ is closed and that $\overline T=S^*$ is true.
We start with the inclusion $S\subset T^*$. Let $h=f\oplus f^\prime\in\dom S$ and $k= g \oplus g^\prime\in\dom T$, 
where $f,g\in H^2(\Omega)$ and 
$f^\prime , g^\prime \in H^2(\Omega^\prime)$. First of all we have 
\begin{equation*}
(T k,h)-(k,Sh)=(\cL_\Omega g,f)_\Omega-(g,\cL_\Omega f)_\Omega+
(\cL_{\Omega^\prime} g^\prime ,f^\prime)_{\Omega^\prime}-(g^\prime,\cL_{\Omega^\prime} f^\prime)_{\Omega^\prime}
\end{equation*}
and Green's identity \eqref{greenid} shows that this is equal to
\begin{equation*}
\left(g\vert_\cC,\frac{\partial f}{\partial\nu}\Bigl|_\cC\right)_\cC-
\biggl(\frac{\partial g}{\partial\nu}\Bigl|_\cC,f\vert_\cC\biggr)_\cC+
\left(g^\prime \vert_\cC,\frac{\partial f^\prime}{\partial\nu^\prime}\Bigl|_\cC\right)_\cC-
\left(\frac{\partial g^\prime}{\partial\nu^\prime}\Bigl|_\cC,f^\prime\vert_\cC\right)_\cC.
\end{equation*}
Since $h=f\oplus f^\prime\in\dom S$ we have
\begin{equation*}
f\vert_\cC=f^\prime\vert_\cC=0\qquad\text{and}\qquad\frac{\partial f}{\partial\nu}\Bigl|_\cC=
-\frac{\partial f^\prime}{\partial\nu^\prime}\Bigl|_\cC,
\end{equation*}
and for $k=g\oplus g^\prime\in\dom T$ by definition $g\vert_\cC=g^\prime\vert_\cC$ holds.
Hence we conclude
\begin{equation*}
(T k,h)-(k,Sh)=0
\end{equation*}
and therefore every $h\in\dom S$ belongs to $\dom T^*$ and $T^*h=Sh$, i.e., $S\subset T^*$. Let us now prove the converse
inclusion $T^*\subset S$. For this it is sufficient to check that every function $h\in\dom T^*$ belongs to 
$\dom S$. From the fact that
$T$ is an extension of the selfadjoint operators $A$ and $\widetilde A$ we conclude 
\begin{equation*}
T^*\subset A^*=A\subset T\qquad\text{and}\qquad T^*\subset\widetilde A^*=\widetilde A\subset T,
\end{equation*}
so
that $T^*$ is a restriction of $A$ and $\widetilde A$. Hence every function $h$ in $\dom T^*$ belongs also to
$\dom A$ and $\dom \widetilde A$. Thus $h=f\oplus f^\prime\in H^2(\dR^n)$ and $f\in H^2(\Omega)$ and $f^\prime\in H^2(\Omega^\prime)$
satisfy $f\vert_\cC=f^\prime\vert_\cC=0$. Therefore $\dom T^*\subset \dom S$
and we have shown $T^*=S$.

Next it will be verified that $T$ is not closed. The arguments are similar as in \cite[Proof of Proposition 4.5]{BKSZ08} and could 
also be formulated in terms of unitary relations
between Krein spaces, cf. \cite{DHMS06}. Assume that $T$ is closed, i.e., $T=\overline T$, and consider the subspace
\begin{equation*}
\cM=\left\{\left[\begin{matrix}f\oplus f^\prime \\ T(f\oplus f^\prime) \\ f\vert_\cC\\ \tfrac{\partial f}{\partial \nu}|_\cC + 
\tfrac{\partial f^\prime}{\partial \nu^\prime}|_\cC\end{matrix}\right]:
 f\oplus f^\prime \in \dom T 
\right\}\subset L^2(\dR^n)\oplus L^2(\dR^n)\oplus L^2(\cC)
\oplus L^2(\cC).
\end{equation*}
Observe that by \eqref{tracemap} and the definition of $T$ the mapping
\begin{equation}\label{tracemapt}
\dom T\ni f \oplus f^\prime\,\,\mapsto\,\,\left\{f|_\cC,\frac{\partial f}{\partial\nu}\Bigl|_\cC+\frac{\partial f^\prime}{\partial\nu^\prime}\Bigl|_\cC\right\}
\in H^{3/2}(\cC)\,\times\, H^{1/2}(\cC)
\end{equation}
is onto.
Setting $\cN=L^2(\Omega)\oplus L^2(\Omega)\oplus \{0\}\oplus \{0\}$ it is clear that the sum of the 
subpaces $\cM$ and $\cN$ is
\begin{equation}\label{cmcn}
\cM+\cN=L^2(\dR^n)\oplus L^2(\dR^n)\oplus \bigl( H^{3/2}(\cC)\,\times\, H^{1/2}(\cC)\bigr).
\end{equation}
We will calculate the orthogonal complements of $\cM$ and $\cN$ in $L^2(\dR^n)\oplus L^2(\dR^n)\oplus L^2(\cC)
\oplus L^2(\cC)$ and show that $\cM^\bot +\cN^\bot$ is closed. First of all we have
\begin{equation}\label{nbot}
\cN^\bot=\{0\}\oplus \{0\}\oplus L^2(\cC)\oplus L^2(\cC)
\end{equation}
and in order to determine $\cM^\bot$ suppose that 
\begin{equation}\label{mort}
\left[\begin{matrix} l \oplus l^\prime \\  g\oplus g^\prime \\ \varphi \\ \psi\end{matrix}\right]\in\cM^\bot,\qquad g,l \in L^2(\Omega),
\,\, g^\prime,l^\prime\in L^2(\Omega^\prime),\,\,\varphi,\psi\in L^2(\cC),
\end{equation} 
is an element in $L^2(\dR^n)\oplus L^2(\dR^n)\oplus L^2(\cC)
\oplus L^2(\cC)$ which is orthogonal to $\cM$.
Then we have 
\begin{equation}\label{polk}
\bigl(T(f\oplus f^\prime),g\oplus g^\prime\bigr)+\bigl(f \oplus f^\prime, l \oplus l^\prime \bigr)=-\bigl(f\vert_\cC,\varphi\bigr)_\cC-
\left(\frac{\partial f}{\partial \nu}\Bigl|_\cC + 
\frac{\partial f^\prime}{\partial \nu^\prime}\Bigl|_\cC,\psi\right)_\cC
\end{equation}
for all $f\oplus f^\prime\in \dom T$. In particular, for $f\oplus f^\prime\in \dom S$ we have 
\begin{equation*}
\frac{\partial f}{\partial \nu}\Bigl|_\cC = - \frac{\partial f^\prime}{\partial \nu^\prime}\Bigl|_\cC\quad\text{and}\quad 
f\vert_\cC=f^\prime\vert_\cC=0,
\end{equation*}
so that \eqref{polk} becomes
\begin{equation*}
\bigl(T(f\oplus f^\prime),g\oplus g^\prime\bigr)=\bigl(S(f\oplus f^\prime),g\oplus g^\prime\bigr)=-\bigl(f \oplus f^\prime,  l \oplus l^\prime\bigr)
\end{equation*}
and hence $g\oplus g^\prime\in \dom S^*$ and $S^*(g\oplus g^\prime)=- l \oplus l^\prime$. But we have assumed that $T$ is closed and hence
from $S=T^*$ we conclude $S^*=T^{**}=\overline T=T$,
so that 
\begin{equation}\label{hkdomt}
 g\oplus g^\prime \in \dom T\qquad \text{and}\quad T(g\oplus g^\prime)=- l \oplus l^\prime.
\end{equation}
From Green's identity we then obtain
\begin{equation*}
\begin{split}
&\bigl(T(f\oplus f^\prime),g\oplus g^\prime\bigr)-\bigl(f \oplus f^\prime, T( g \oplus g^\prime)\bigr)\\
&\qquad=(\cL_\Omega f,g)_\Omega-(f,\cL_\Omega g)_\Omega+(\cL_{\Omega^\prime} f^\prime,g^\prime)_{\Omega^\prime}
-(f^\prime,\cL_{\Omega^\prime}g^\prime)_{\Omega^\prime}\\
&\qquad=\left(f|_\cC,\frac{\partial g}{\partial\nu}\Bigl|_\cC\right)_\cC-
\left(\frac{\partial f}{\partial\nu}\Bigl|_\cC,g|_\cC\right)_\cC
+\left(f^\prime|_\cC,\frac{\partial g^\prime}{\partial\nu^\prime}\Bigl|_\cC\right)_\cC
-\left(\frac{\partial f^\prime}{\partial\nu^\prime}\Bigl|_\cC,g^\prime|_\cC\right)_\cC\\
&\qquad=\left(f|_\cC,\frac{\partial g}{\partial\nu}\Bigl|_\cC+\frac{\partial g^\prime}{\partial\nu^\prime}
\Bigl|_\cC\right)_\cC-
\left(\frac{\partial f}{\partial\nu}\Bigl|_\cC+\frac{\partial f^\prime}{\partial\nu^\prime}\Bigl|_\cC,g|_\cC\right)_\cC,
\end{split}
\end{equation*}
where we have used that $f\oplus f^\prime,\,g\oplus g^\prime\in\dom T$ satisfy $f|_\cC=f^\prime|_\cC$ and 
$g|_\cC=g^\prime|_\cC$. Inserting \eqref{hkdomt} in \eqref{polk} and comparing this with
the above relation shows that the identity
\begin{equation}\label{compare}
\left(f|_\cC,\frac{\partial g}{\partial\nu}\Bigl|_\cC+\frac{\partial g^\prime}{\partial\nu^\prime}\Bigl|_\cC+\,\varphi\right)_\cC
=\left(\frac{\partial f}{\partial\nu}\Bigl|_\cC+\frac{\partial f^\prime}{\partial\nu^\prime}\Bigl|_\cC,g|_\cC-\psi\right)_\cC
\end{equation}
holds for all $f\oplus f^\prime\in \dom T$. As the mapping \eqref{tracemapt} is surjective 
and $H^{3/2}(\cC) \times H^{1/2}(\cC)$ is dense in $L^2(\cC)\oplus L^2(\cC)$ we conclude from \eqref{compare} that
\begin{equation*}
\varphi=-\left(\frac{\partial g}{\partial\nu}\Bigl|_\cC+\frac{\partial g^\prime}{\partial\nu^\prime}\Bigl|_\cC\right)
\qquad\text{and}\qquad
\psi=g|_\cC
\end{equation*}
holds. Hence we have seen that the element \eqref{mort} in $\cM^\bot$ is of the form
\begin{equation}\label{mort2}
\left[\begin{matrix} - T(g\oplus g^\prime) \\ g\oplus g^\prime \\  -\tfrac{\partial g}{\partial \nu}|_\cC - 
\tfrac{\partial g^\prime}{\partial \nu^\prime}|_\cC \\ g\vert_\cC \end{matrix}\right]
\end{equation}
for some $g\oplus g^\prime\in \dom T$.
It is not difficult to check that conversely an element as in \eqref{mort2} belongs to $\cM^\bot$. Therefore the
orthogonal complement of $\cM$ is given by
\begin{equation*}
\cM^\bot=\left\{\left[\begin{matrix} - T(g\oplus g^\prime) \\ g\oplus g^\prime \\  
-\tfrac{\partial g}{\partial n}\bigl|_\cC -
\tfrac{\partial g^\prime}{\partial \nu^\prime}\bigl|_\cC \\ g\vert_\cC \end{matrix}\right]: 
 g\oplus g^\prime \in \dom T  \right\}
\subset L^2(\dR^n)\oplus L^2(\dR^n)\oplus L^2(\cC)\oplus L^2(\cC)
\end{equation*}
and together with \eqref{nbot} we find that the sum of $\cM^\bot$ and $\cN^\bot$ is
\begin{equation*}
\cM^\bot +\cN^\bot =\left\{\left[\begin{matrix} - T(g\oplus g^\prime)\\ g\oplus g^\prime \end{matrix}\right]:g\oplus g^\prime\in\dom T 
\right\}\oplus L^2(\cC)\oplus L^2(\cC).
\end{equation*}
The assumption that $T$ is closed implies that $\cM^\bot +\cN^\bot$ is a closed subspace of $L^2(\dR^n)\oplus L^2(\dR^n)\oplus L^2(\cC)\oplus L^2(\cC)$.
But then according to \cite[IV Theorem 4.8]{K76} also $\cM+\cN$ is a closed subspace 
of $L^2(\dR^n)\oplus L^2(\dR^n)\oplus L^2(\cC)\oplus L^2(\cC)$ which is a contradiction to \eqref{cmcn}. Thus $T$ can not be closed.
\end{proof}

The following lemma will be useful later in this section.

\begin{lemma}\label{usefullemma}
Let $S$ and $T$ be as in Theorem~\ref{opscoup} and let $\widetilde A$ be the selfadjoint realization of $\cL$ in $L^2(\dR^n)$
defined on $H^2(\dR^n)$. For a function $f\oplus f^\prime\in \dom T$, where $f\in H^2(\Omega)$ and $f^\prime\in H^2(\Omega^\prime)$,
we have
\begin{equation*}
f\oplus f^\prime\in \dom \widetilde A\qquad\text{if and only if}\qquad \frac{\partial f}{\partial\nu}\Bigl|_\cC=
-\frac{\partial f^\prime}{\partial\nu^\prime}\Bigl|_\cC.
\end{equation*}
\end{lemma}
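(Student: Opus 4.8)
The plan is to identify $\dom\widetilde A$ inside $\dom T$ by testing against the adjoint. Because $\widetilde A\subset T$ (Theorem~\ref{opscoup}) and $\widetilde A$ is selfadjoint, a function $h=f\oplus f^\prime\in\dom T$ belongs to $\dom\widetilde A=\dom\widetilde A^{*}$ if and only if
\[
(Th,k)=(h,\widetilde A k)\qquad\text{for all }k\in\dom\widetilde A=H^2(\dR^n);
\]
indeed, for ``$\Rightarrow$'' one has $Th=\widetilde A h$ and uses selfadjointness, while for ``$\Leftarrow$'' the displayed identity simply says $h\in\dom\widetilde A^{*}$. Thus everything reduces to writing the difference $(Th,k)-(h,\widetilde A k)$ as a boundary pairing over $\cC$ and reading off when it vanishes for all admissible $k$.

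I would first record the elementary trace fact used for the test functions: if $k=g\oplus g^\prime$ with $g\in H^2(\Omega)$, $g^\prime\in H^2(\Omega^\prime)$ actually belongs to $H^2(\dR^n)$, then each first derivative $\partial_{x_m}k$ lies in $H^1(\dR^n)$ and hence has a single trace on the smooth hypersurface $\cC$ from either side; consequently $\partial_{x_m}g|_\cC=\partial_{x_m}g^\prime|_\cC$ for $m=1,\dots,n$, so in particular $g|_\cC=g^\prime|_\cC$, and multiplying by $a_{jk}n_j$, summing over $j,k$ and using $n^\prime=-n$ gives $\tfrac{\partial g}{\partial\nu}|_\cC=-\tfrac{\partial g^\prime}{\partial\nu^\prime}|_\cC$. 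Now for $h=f\oplus f^\prime\in\dom T$ and $k=g\oplus g^\prime\in H^2(\dR^n)$, applying Green's identity \eqref{greenid} on $\Omega$ and on $\Omega^\prime$ and adding the two equalities yields
\[
(Th,k)-(h,\widetilde A k)=\bigl(f|_\cC,\tfrac{\partial g}{\partial\nu}|_\cC\bigr)_\cC-\bigl(\tfrac{\partial f}{\partial\nu}|_\cC,g|_\cC\bigr)_\cC+\bigl(f^\prime|_\cC,\tfrac{\partial g^\prime}{\partial\nu^\prime}|_\cC\bigr)_\cC-\bigl(\tfrac{\partial f^\prime}{\partial\nu^\prime}|_\cC,g^\prime|_\cC\bigr)_\cC .
\]
Substituting $f|_\cC=f^\prime|_\cC$ (because $h\in\dom T$) together with $g|_\cC=g^\prime|_\cC$ and $\tfrac{\partial g}{\partial\nu}|_\cC=-\tfrac{\partial g^\prime}{\partial\nu^\prime}|_\cC$ makes the first and third terms cancel and collapses the difference to
\[
(Th,k)-(h,\widetilde A k)=-\bigl(\tfrac{\partial f}{\partial\nu}|_\cC+\tfrac{\partial f^\prime}{\partial\nu^\prime}|_\cC,\;g|_\cC\bigr)_\cC .
\]

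Both implications of the lemma then drop out of this last formula. If $\tfrac{\partial f}{\partial\nu}|_\cC=-\tfrac{\partial f^\prime}{\partial\nu^\prime}|_\cC$, the right-hand side is zero for every $k\in H^2(\dR^n)$, so $(Th,k)=(h,\widetilde A k)$ holds for all such $k$ and therefore $h\in\dom\widetilde A^{*}=\dom\widetilde A$. Conversely, if $h\in\dom\widetilde A$ the left-hand side vanishes for all $k$; since the traces $k|_\cC$ of the test functions $k\in C_0^\infty(\dR^n)\subset H^2(\dR^n)$ already exhaust $C^\infty(\cC)$, which is dense in $L^2(\cC)$, this forces $\tfrac{\partial f}{\partial\nu}|_\cC+\tfrac{\partial f^\prime}{\partial\nu^\prime}|_\cC=0$.

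The only ingredient requiring genuine care is the trace identity for the test functions---that an $H^2(\dR^n)$ function split along $\cC$ has matching traces of all first-order derivatives from both sides---which is standard Sobolev trace theory for the $C^\infty$-hypersurface $\cC$ and is where the smoothness of $\cC$ is used; everything else is Green's identity \eqref{greenid} and bookkeeping with the sign $n^\prime=-n$.
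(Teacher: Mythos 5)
Your proof is correct and follows essentially the same route as the paper: reduce membership in $\dom\widetilde A=\dom\widetilde A^{*}$ to the vanishing of the Green's-identity boundary pairing against all $k\in H^2(\dR^n)$, using that such $k$ have matching traces and opposite conormal derivatives from the two sides of $\cC$. The only cosmetic difference is in the forward direction, which the paper dismisses as immediate from the $H^1$-trace matching of first derivatives of an $H^2(\dR^n)$ function, whereas you recover it from the collapsed boundary pairing together with density of the traces $k|_\cC$ in $L^2(\cC)$; both are valid.
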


\begin{proof}
For a function $f\oplus f^\prime\in \dom\widetilde A=H^2(\dR^n)$ it is clear that $\tfrac{\partial f}{\partial\nu}|_\cC=
-\tfrac{\partial f^\prime}{\partial\nu^\prime}|_\cC$ holds. Conversely, let $f\oplus f^\prime\in\dom T$ and assume 
\begin{equation}
 \frac{\partial f}{\partial\nu}\Bigl|_\cC=
-\frac{\partial f^\prime}{\partial\nu^\prime}\Bigl|_\cC.
\end{equation}
Then also $f|_\cC=f^\prime|_\cC$ and since every $g\oplus g^\prime\in\dom \widetilde A$ satisfies
\begin{equation*}
g|_\cC=g^\prime|_\cC\qquad\text{and}\qquad \frac{\partial g}{\partial\nu}\Bigl|_\cC=
-\frac{\partial g^\prime}{\partial\nu^\prime}\Bigl|_\cC
\end{equation*}
Green's identity implies
\begin{equation*}
\begin{split}
&\qquad\qquad\bigl(\widetilde A (g\oplus g^\prime),f\oplus f^\prime\bigr)-\bigl(g\oplus g^\prime,T(f\oplus f^\prime)\bigr)\\
&=\left(g|_\cC,\frac{\partial f}{\partial\nu}\Bigl|_\cC\right)_\cC-
\left(\frac{\partial g}{\partial\nu}\Bigl|_\cC,f|_\cC\right)_\cC+
\left(g^\prime|_\cC,\frac{\partial f^\prime}{\partial\nu}\Bigl|_\cC\right)_\cC-
\left(\frac{\partial g^\prime}{\partial\nu}\Bigl|_\cC,f^\prime|_\cC\right)_\cC=0.
\end{split}
\end{equation*}
Therefore $f\oplus f^\prime\in\dom \widetilde A^*=\dom\widetilde A$.
\end{proof}

Next we define a mapping $\Gamma_{\lambda_0}$ which satisfies the assumptions in the definition of a generalized $Q$-function.
For this let $A$ be the selfadjoint operator
in $L^2(\dR^n)$ in \eqref{aschro} and \eqref{aschroe} which is the orthogonal sum of the Dirichlet operators $A_\Omega$ and $A_{\Omega^\prime}$ in $L^2(\Omega)$ and $L^2(\Omega^\prime)$, respectively. For $\lambda\in\rho(A)$ the domain of the operator $T$ in Theorem~\ref{opscoup} can be decomposed
in
\begin{equation}\label{deco3}
\begin{split}
\dom T&=\dom A\,\dot+\,\cN_\lambda(T)\\
&=\bigl\{f\oplus f^\prime\in H^2(\Omega)\oplus H^2(\Omega^\prime): f|_\cC=f^\prime|_\cC=0\bigr\}\, \dot+\, \cN_\lambda(T),
\end{split}
\end{equation}
cf. \eqref{decoall}.
Let us fix some $\lambda_0\in\rho(A)$. The decomposition \eqref{deco3} 
and the surjectivity of the map 
\begin{equation}\label{tracemaptt}
\dom T\ni f \oplus f^\prime\,\,\mapsto\,\,\left\{f|_\cC,\frac{\partial f}{\partial\nu}\Bigl|_\cC+\frac{\partial f^\prime}
{\partial\nu^\prime}\Bigl|_\cC\right\}\in H^{3/2}(\cC)\,\times\, H^{1/2}(\cC),
\end{equation}
cf. \eqref{tracemap}, \eqref{tracemapt} imply that for a given function $\varphi\in H^{3/2}(\cC)$ there exists a unique function 
$f_{\lambda_0}\oplus f^\prime_{\lambda_0}\in \cN_{\lambda_0}(T)$ such that $f_{\lambda_0}|_\cC=f^\prime_{\lambda_0}|_\cC=\varphi$. 
Let $\Gamma_{\lambda_0}$
be the mapping that assigns $f_{\lambda_0}\oplus f^\prime_{\lambda_0}$ to $\varphi$,
\begin{equation}\label{gammalambda0}
H^{3/2}(\cC)\ni \varphi\mapsto \Gamma_{\lambda_0}\varphi:=f_{\lambda_0}\oplus f^\prime_{\lambda_0}.
\end{equation}
Similarly as in the previous section $\Gamma_{\lambda_0}$ will be regarded as an operator from $L^2(\cC)$ to $L^2(\dR^n)$ with 
$\dom\Gamma_{\lambda_0}=H^{3/2}(\cC)$ and $\ran\Gamma_{\lambda_0}=\cN_{\lambda_0}(T)$. Observe that the function $\Gamma_{\lambda_0}\varphi=f_{\lambda_0}\oplus f^\prime_{\lambda_0}$ consists of an $H^2(\Omega)$-solution $f_{\lambda_0}$ of $\cL_\Omega u=\lambda_0 u$
and an $H^2(\Omega^\prime)$-solution $f^\prime_{\lambda_0}$ of $\cL_{\Omega^\prime} u^\prime=\lambda_0 u^\prime$ satisfying the boundary conditions $\varphi=f_{\lambda_0}|_\cC=f^\prime_{\lambda_0}|_\cC$.

The following proposition parallels Proposition~\ref{Gammalambda0prop}.

\begin{proposition}\label{Gammalambda0prop2}
Let $\lambda_0\in\rho(A)$, let $\Gamma_{\lambda_0}$ be as in \eqref{gammalambda0} and let $\lambda\in\rho(A)$. 
Then the following holds:
\begin{enumerate}
\item [{\rm (i)}] $\Gamma_{\lambda_0}$ is a bounded operator from $L^2(\cC)$ in $L^2(\dR^n)$ with dense
domain $H^{3/2}(\cC)$;
\item [{\rm (ii)}] The operator $\Gamma(\lambda)=(I+(\lambda-\lambda_0)(A-\lambda)^{-1})\Gamma_{\lambda_0}$ is given
by 
\begin{equation*}
\Gamma(\lambda) \varphi =f_\lambda\oplus f^\prime_\lambda ,\quad\text{where}\quad f_\lambda\oplus f^\prime_\lambda\in\cN_\lambda(T)\,\,\,\,\text{and}\,\,\,\,
f_\lambda\vert_\cC=\varphi=f^\prime_\lambda\vert_\cC;
\end{equation*}
\item [{\rm (iii)}] The mapping $\Gamma(\bar\lambda)^*:L^2(\dR^n)\rightarrow L^2(\cC)$ satisfies 
\begin{equation*}
\Gamma(\bar\lambda)^*(A-\lambda)h=-\frac{\partial f}{\partial\nu}\Bigl|_\cC - \frac{\partial f^\prime}{\partial\nu^\prime}\Bigl|_\cC,\qquad 
h=f\oplus f^\prime\in\dom A.
\end{equation*}
\end{enumerate}
\end{proposition}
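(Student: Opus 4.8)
The plan is to imitate the proof of Proposition~\ref{Gammalambda0prop}, now carrying the two domains $\Omega$ and $\Omega^\prime$ along in parallel. As there, I would first prove (ii), then derive (iii) from it, and finally read off (i) from (iii). The only inputs are the decomposition \eqref{deco3}, the surjectivity of the boundary map \eqref{tracemaptt}, Green's identity \eqref{greenid} applied separately on $\Omega$ and on $\Omega^\prime$, and Lemma~\ref{gamlem} (which gives $\ran\Gamma(\lambda)=\cN_\lambda(T)$ and $\ker\Gamma(\lambda)=\{0\}$).

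For (ii): by \eqref{deco3} together with the surjectivity of \eqref{tracemaptt}, the map sending $f_\lambda\oplus f^\prime_\lambda\in\cN_\lambda(T)$ to its boundary value $f_\lambda|_\cC=f^\prime_\lambda|_\cC\in H^{3/2}(\cC)$ is a bijection onto $H^{3/2}(\cC)$; injectivity follows because an element of $\cN_\lambda(T)$ with vanishing trace on $\cC$ lies in $\dom A\cap\cN_\lambda(T)=\{0\}$. Given $\varphi\in H^{3/2}(\cC)$, choose the unique $f_\lambda\oplus f^\prime_\lambda\in\cN_\lambda(T)$ and $f_{\lambda_0}\oplus f^\prime_{\lambda_0}\in\cN_{\lambda_0}(T)$ with boundary value $\varphi$, so that $\Gamma_{\lambda_0}\varphi=f_{\lambda_0}\oplus f^\prime_{\lambda_0}$ by \eqref{gammalambda0}. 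Then $\Gamma(\lambda)\varphi=\Gamma_{\lambda_0}\varphi+(\lambda-\lambda_0)(A-\lambda)^{-1}\Gamma_{\lambda_0}\varphi$; the second term lies in $\dom A$, hence its trace on $\cC$ vanishes, so the boundary value of $\Gamma(\lambda)\varphi$ equals $\varphi$. Since $\Gamma(\lambda)\varphi\in\cN_\lambda(T)$ by Lemma~\ref{gamlem}, injectivity of the boundary map forces $\Gamma(\lambda)\varphi=f_\lambda\oplus f^\prime_\lambda$.

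For (iii): fix $\varphi\in H^{3/2}(\cC)$ and put $g_{\bar\lambda}\oplus g^\prime_{\bar\lambda}:=\Gamma(\bar\lambda)\varphi\in\cN_{\bar\lambda}(T)$ as in (ii), so that $g_{\bar\lambda}|_\cC=g^\prime_{\bar\lambda}|_\cC=\varphi$ and $T(g_{\bar\lambda}\oplus g^\prime_{\bar\lambda})=\bar\lambda(g_{\bar\lambda}\oplus g^\prime_{\bar\lambda})$. For $h=f\oplus f^\prime\in\dom A$ I would write $\bigl(\Gamma(\bar\lambda)\varphi,(A-\lambda)h\bigr)=(g_{\bar\lambda}\oplus g^\prime_{\bar\lambda},Ah)-\bigl(T(g_{\bar\lambda}\oplus g^\prime_{\bar\lambda}),h\bigr)$ and then apply Green's identity \eqref{greenid} on $\Omega$ and on $\Omega^\prime$. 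Because $f|_\cC=f^\prime|_\cC=0$ for $h\in\dom A$, the two ``Dirichlet'' boundary terms drop out and only $-\bigl(g_{\bar\lambda}|_\cC,\tfrac{\partial f}{\partial\nu}|_\cC\bigr)_\cC-\bigl(g^\prime_{\bar\lambda}|_\cC,\tfrac{\partial f^\prime}{\partial\nu^\prime}|_\cC\bigr)_\cC$ survives; using $g_{\bar\lambda}|_\cC=g^\prime_{\bar\lambda}|_\cC=\varphi$ this equals $\bigl(\varphi,-\tfrac{\partial f}{\partial\nu}|_\cC-\tfrac{\partial f^\prime}{\partial\nu^\prime}|_\cC\bigr)_\cC$. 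Since this holds for all $\varphi\in\dom\Gamma(\bar\lambda)=H^{3/2}(\cC)$, it follows that $(A-\lambda)h\in\dom\Gamma(\bar\lambda)^*$ and that the stated formula holds. Finally, $\lambda\in\rho(A)$ gives $(A-\lambda)\dom A=L^2(\dR^n)$, so $\Gamma(\bar\lambda)^*$ is everywhere defined; being closed it lies in $\cL(L^2(\dR^n),L^2(\cC))$, hence $\Gamma(\bar\lambda)\subset\overline{\Gamma(\bar\lambda)}=\Gamma(\bar\lambda)^{**}$ is bounded, and specialising $\lambda_0=\bar\lambda$ yields (i).

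The computations are routine; the only point requiring care is the bookkeeping of the two conormal derivatives. Since an element of $\dom A$ has vanishing trace on $\cC$ from both sides but in general a jump in its conormal derivative, Green's identity on $\Omega$ and $\Omega^\prime$ produces exactly the \emph{sum} $\tfrac{\partial f}{\partial\nu}|_\cC+\tfrac{\partial f^\prime}{\partial\nu^\prime}|_\cC$ (recall $n^\prime=-n$), which is why the right-hand side of (iii) has the form it does. I do not expect any genuine obstacle beyond keeping these sign conventions straight.
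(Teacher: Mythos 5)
Your proposal is correct and follows essentially the same route as the paper: prove (ii) from the decomposition \eqref{deco3} and the vanishing trace of the $\dom A$-part, derive (iii) by pairing $\Gamma(\bar\lambda)\varphi$ against $(A-\lambda)h$ and applying Green's identity separately on $\Omega$ and $\Omega^\prime$, and obtain (i) from (iii) with $\lambda_0=\bar\lambda$. The sign bookkeeping for the two conormal derivatives is exactly as in the paper's proof.
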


\begin{proof}
We start with the proof (ii). Let $\varphi\in H^{3/2}(\cC)$ and choose the unique elements $f_\lambda\oplus f^\prime_\lambda\in\cN_\lambda(T)$ and
$f_{\lambda_0}\oplus f^\prime_{\lambda_0}\in\cN_{\lambda_0}(T)$ such that
\begin{equation*}
f_\lambda|_\cC=f^\prime_\lambda|_\cC=\varphi=f_{\lambda_0}|_\cC=f^\prime_{\lambda_0}|_\cC
\end{equation*}
holds. By definition $\Gamma_{\lambda_0}\varphi=f_{\lambda_0}\oplus f^\prime_{\lambda_0}$ and therefore
\begin{equation*}
\begin{split}
 \Gamma(\lambda)\varphi&=\Gamma_{\lambda_0}\varphi+(\lambda-\lambda_0)(A-\lambda)^{-1}\Gamma_{\lambda_0}\varphi\\
&=f_{\lambda_0}\oplus f^\prime_{\lambda_0}+(\lambda-\lambda_0)(A-\lambda)^{-1}\Gamma_{\lambda_0}\varphi.
\end{split}
\end{equation*}
Since $(\lambda-\lambda_0)(A-\lambda)^{-1}\Gamma_{\lambda_0}\varphi$ is a function belonging to $\dom A$ we have
\begin{equation*}
\bigl((\lambda-\lambda_0)(A-\lambda)^{-1}\Gamma_{\lambda_0}\varphi\bigr)\bigl|_\cC=0,
\end{equation*}
cf. \eqref{aschroe}. This implies
\begin{equation*}
(\Gamma(\lambda)\varphi)|_\cC= (\Gamma_{\lambda_0}\varphi)|_\cC=\bigl(f_{\lambda_0}\oplus f^\prime_{\lambda_0}\bigr)|_\cC=f_{\lambda_0}|_\cC=f^\prime_{\lambda_0}|_\cC=\varphi
\end{equation*}
and since $\ran\Gamma(\lambda)=\cN_\lambda(T)$, see Lemma~\ref{gamlem}, and $f_\lambda\oplus f_\lambda^\prime$ is the unique function in $\cN_\lambda(T)$ with
$f_\lambda|_\cC=f^\prime_\lambda|_\cC=\varphi$ we conclude $\Gamma(\lambda)\varphi=f_\lambda\oplus f_\lambda^\prime$.

\vskip 0.3cm\noindent
Next we verify (iii). Observe that then $\Gamma(\bar\lambda)^*$, $\lambda\in\rho(A)$, 
is a closed operator which is defined on the whole space, i.e.,
$\Gamma(\bar\lambda)^*$ is bounded and hence assertion (i) follows by setting $\lambda_0=\bar\lambda$.
Let $\varphi\in H^{3/2}(\cC)$ and choose the unique function $f_{\bar\lambda}\oplus f^\prime_{\bar\lambda}\in\cN_{\bar\lambda}(T)$ 
such that 
\begin{equation}\label{asdf}
f_{\bar\lambda}\vert_\cC=f^\prime_{\bar\lambda}\vert_\cC=\varphi
\end{equation}
holds. Then $\Gamma(\bar\lambda)\varphi=f_{\bar\lambda}\oplus f^\prime_{\bar\lambda}$ and for each 
$h=f\oplus f^\prime\in\dom A$, where $f\in H^2(\Omega)$, $f^\prime\in H^2(\Omega^\prime)$, we have
\begin{equation*}
\begin{split}
\bigl(\Gamma(\bar\lambda)\varphi,(A-\lambda)h\bigr)&=\bigl(f_{\bar\lambda}\oplus f^\prime_{\bar\lambda}, A(f\oplus f^\prime)\bigr)
-\bigl(T(f_{\bar\lambda}\oplus f^\prime_{\bar\lambda}),f\oplus f^\prime\bigr)\\
&=(f_{\bar\lambda},\cL_\Omega f)_\Omega-(\cL_\Omega f_{\bar\lambda},f)_\Omega+
(f^\prime_{\bar\lambda},\cL_{\Omega^\prime} f^\prime)_{\Omega^\prime}-(\cL_{\Omega^\prime} f^\prime_{\bar\lambda},f^\prime)_{\Omega^\prime}.
\end{split}
\end{equation*}
With the help of Green's identity this can be rewritten as
\begin{equation*}
\left(\frac{\partial f_{\bar\lambda}}{\partial\nu}\Bigl|_\cC,f|_\cC\right)_\cC-
\left(f_{\bar\lambda}|_\cC,\frac{\partial f}{\partial\nu}\Bigl|_\cC\right)_\cC
+\left(\frac{\partial f^\prime_{\bar\lambda}}{\partial\nu^\prime}\Bigl|_\cC,f^\prime|_\cC\right)_\cC-
\left(f^\prime_{\bar\lambda}|_\cC,\frac{\partial f^\prime}{\partial\nu^\prime}\Bigl|_\cC\right)_\cC.
\end{equation*}
Since for $h=f\oplus f^\prime\in\dom A$ we have $f|_\cC=f^\prime|_\cC=0$ we conclude from the above calculation and \eqref{asdf} that
\begin{equation*}
\bigl(\Gamma(\bar\lambda)\varphi,(A-\lambda)h\bigr)=-
\left(\varphi,\frac{\partial f}{\partial\nu}\Bigl|_\cC+\frac{\partial f^\prime}{\partial\nu^\prime}\Bigl|_\cC\right)_\cC
\end{equation*}
holds for every $\varphi\in H^{3/2}(\cC)=\dom\Gamma(\bar\lambda)$. Hence $(A-\lambda)h\in\dom\Gamma(\bar\lambda)^*$
and
\begin{equation*}
\Gamma(\bar\lambda)^*(A-\lambda)h=-\frac{\partial f}{\partial\nu}\Bigl|_\cC-\frac{\partial f^\prime}{\partial\nu^\prime}\Bigl|_\cC,
\qquad h=f\oplus f^\prime\in\dom A.
\end{equation*}
Furthermore, for $\lambda\in\rho(A)$ we have $\ran(A-\lambda)=L^2(\dR^n)$, so that $\Gamma(\bar\lambda)^*$ is a bounded operator defined on $L^2(\dR^n)$.
\end{proof}

Next we define a function $Q$ in a similar way as the Dirichlet-to-Neumann map in Definition~\ref{dirneu}. For this we make use of 
the decomposition \eqref{deco3}. Namely, for $\lambda\in\rho(A)$ and $\varphi\in H^{3/2}(\cC)$ there exists a unique function
$f_\lambda\oplus f^\prime_\lambda\in\cN_\lambda(T)$ such that $f_\lambda\vert_\cC=f^\prime_\lambda\vert_\cC=\varphi$. The operator 
$Q(\lambda)$ in $L^2(\cC)$ is now defined by
\begin{equation}\label{qcoup}
Q(\lambda)\varphi:=-\frac{\partial f_\lambda}{\partial\nu}\Bigl|_\cC-
\frac{\partial f_\lambda^\prime}{\partial\nu^\prime}\Bigl|_\cC,
\qquad\varphi\in \dom Q(\lambda)=H^{3/2}(\cC).
\end{equation}
Observe that $\ran Q(\lambda)\subset H^{1/2}(\cC)$ holds. Roughly speaking, up to a minus sign $Q(\lambda)$ maps the Dirichlet boundary value of 
the $H^2$-solutions of $\cL_\Omega u=\lambda u$ and $\cL_{\Omega^\prime}u^\prime=\lambda u^\prime$, $u|_\cC=u^\prime|_\cC$,
onto the sum of the Neumann boundary values of these solutions. We mention that in the analysis of so-called intermediate
Hamiltonians a modified form of such a Dirichlet-to-Neumann map has been used in \cite{MPP07}.

In the following theorem it turns out that $Q$ can be interpreted as a generalized $Q$-function  and 
the difference of the resolvents of $A$ and $\widetilde A$ is expressed with the help of $Q$.

\begin{theorem}\label{bigthm2}
Let $\cL$ be the elliptic differential expression in \eqref{cl2} and let $A$ and $\widetilde A$ be the selfadjoint
realizations of $\cL$ in \eqref{aschro}-\eqref{aschroe} and \eqref{atildeschroe}, respectively. Let $S$ and $T$
be the operators in Theorem~\ref{opscoup}, define
$\Gamma(\lambda)$ as in 
Proposition~\ref{Gammalambda0prop2} and let $Q(\lambda)$, $\lambda\in\rho(A)$, be as in \eqref{qcoup}.
Then the following holds:
\begin{enumerate}
\item [{\rm (i)}] $Q$ is a generalized $Q$-function of the triple $\{S,A,T\}$; 
\item [{\rm (ii)}] The operator $Q(\lambda)$ is injective 
for all $\lambda\in\rho(A)\cap\rho(\widetilde A)$ and the resolvent formula
\begin{equation}\label{resform2}
(A-\lambda)^{-1}-(\widetilde A-\lambda)^{-1}=\Gamma(\lambda) Q(\lambda)^{-1} \Gamma(\bar\lambda)^*
\end{equation}
holds;
\item [{\rm (iii)}] For $p\in\dN$ and $2p+1>n$ the difference of the resolvents in \eqref{resform2}
belongs to the von Neumann-Schatten class $\sS_p(L^2(\Omega))$.
\end{enumerate}
\end{theorem}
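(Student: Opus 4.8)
The plan is to follow the three-step scheme of the proof of Theorem~\ref{bigthm1}, now applying Green's identity \eqref{greenid} on $\Omega$ and on $\Omega^\prime$ simultaneously and invoking Lemma~\ref{usefullemma} at the places where the single-domain argument used the explicit Neumann boundary condition. For part~(i) I would first check that for $\mu\in\rho(A)$ the adjoint $Q(\mu)^*$ is an extension of $Q(\bar\mu)$, so that in particular $\dom Q(\lambda)=H^{3/2}(\cC)\subset\dom Q(\mu)^*$. Given $\psi\in H^{3/2}(\cC)$, choose the unique $f_{\bar\mu}\oplus f^\prime_{\bar\mu}\in\cN_{\bar\mu}(T)$ with $f_{\bar\mu}|_\cC=f^\prime_{\bar\mu}|_\cC=\psi$, and for $\varphi\in H^{3/2}(\cC)$ the unique $f_\mu\oplus f^\prime_\mu\in\cN_\mu(T)$ with trace $\varphi$; then Green's identity on $\Omega$ and on $\Omega^\prime$, combined with the matching conditions $f_\mu|_\cC=f^\prime_\mu|_\cC$, $f_{\bar\mu}|_\cC=f^\prime_{\bar\mu}|_\cC$ and with $(T(f_\mu\oplus f^\prime_\mu),f_{\bar\mu}\oplus f^\prime_{\bar\mu})=(f_\mu\oplus f^\prime_\mu,T(f_{\bar\mu}\oplus f^\prime_{\bar\mu}))$, collapses to $(Q(\mu)\varphi,\psi)_\cC=\bigl(\varphi,-\tfrac{\partial f_{\bar\mu}}{\partial\nu}|_\cC-\tfrac{\partial f^\prime_{\bar\mu}}{\partial\nu^\prime}|_\cC\bigr)_\cC$, which gives $\psi\in\dom Q(\mu)^*$ and $Q(\mu)^*\psi=Q(\bar\mu)\psi$. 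The defining identity \eqref{q} on $H^{3/2}(\cC)$ is then obtained in the same way: for $\varphi,\psi\in H^{3/2}(\cC)$ with associated solutions $f_\lambda\oplus f^\prime_\lambda\in\cN_\lambda(T)$ and $g_\mu\oplus g^\prime_\mu\in\cN_\mu(T)$, Green's identity on the two domains reduces the boundary terms to $(T(f_\lambda\oplus f^\prime_\lambda),g_\mu\oplus g^\prime_\mu)-(f_\lambda\oplus f^\prime_\lambda,T(g_\mu\oplus g^\prime_\mu))=(\lambda-\bar\mu)(f_\lambda\oplus f^\prime_\lambda,g_\mu\oplus g^\prime_\mu)$, and Proposition~\ref{Gammalambda0prop2}(ii) identifies the right-hand side with $(\lambda-\bar\mu)(\Gamma(\mu)^*\Gamma(\lambda)\varphi,\psi)_\cC$; density of $H^{3/2}(\cC)$ in $L^2(\cC)$ finishes (i).

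For part~(ii), injectivity: if $Q(\lambda)\varphi=0$ with $\lambda\in\rho(A)\cap\rho(\widetilde A)$ and $f_\lambda\oplus f^\prime_\lambda\in\cN_\lambda(T)$ has trace $\varphi$, then $\tfrac{\partial f_\lambda}{\partial\nu}|_\cC=-\tfrac{\partial f^\prime_\lambda}{\partial\nu^\prime}|_\cC$, so Lemma~\ref{usefullemma} places $f_\lambda\oplus f^\prime_\lambda$ in $\dom\widetilde A\cap\cN_\lambda(T)$, which is $\{0\}$ since $\widetilde A\subset T$ and $\lambda\in\rho(\widetilde A)$; hence $\varphi=0$. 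Before proving \eqref{resform2} one notes that it is well posed: by Proposition~\ref{Gammalambda0prop2}(iii), $\ran\Gamma(\bar\lambda)^*\subset H^{1/2}(\cC)$, and by surjectivity of the trace map \eqref{tracemaptt} together with the classical elliptic isomorphism results of \cite{LM72} (cf. \cite{G71}), $Q(\lambda)^{-1}$ is defined on all of $H^{1/2}(\cC)$ and maps it onto $H^{3/2}(\cC)=\dom\Gamma(\lambda)$. Then, exactly as in Theorem~\ref{bigthm1}, for $f\in L^2(\dR^n)$ write $f=(A-\lambda)h$ with $h=f_A\oplus f^\prime_A\in\dom A$ and use Proposition~\ref{Gammalambda0prop2}(iii) to rewrite
\[
g:=(A-\lambda)^{-1}f-\Gamma(\lambda)Q(\lambda)^{-1}\Gamma(\bar\lambda)^*f
=h+\Gamma(\lambda)Q(\lambda)^{-1}\Bigl(\tfrac{\partial f_A}{\partial\nu}\big|_\cC+\tfrac{\partial f^\prime_A}{\partial\nu^\prime}\big|_\cC\Bigr).
\]
The second summand lies in $\cN_\lambda(T)$, so $g$ belongs to $\dom T$ as a sum of $h\in\dom A$ and an element of $\cN_\lambda(T)$, and the definition of $Q(\lambda)$ shows that the sum of the conormal derivatives of $g$ on $\cC$ vanishes; hence $g\in\dom\widetilde A$ by Lemma~\ref{usefullemma}. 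Applying $\widetilde A-\lambda$, a restriction of $T-\lambda$ by Theorem~\ref{opscoup}, and using $\ran\Gamma(\lambda)=\ker(T-\lambda)$ (Lemma~\ref{gamlem}) gives $(\widetilde A-\lambda)g=(A-\lambda)h=f$, i.e. \eqref{resform2}.

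Part~(iii) follows from \eqref{resform2} in the same way as the uniformly elliptic case treated in \cite{B62}: one shows, as in Corollary~\ref{prop2}, that $\overline{Q(\lambda)^{-1}}$ is compact — in fact bounded from $L^2(\cC)$ into $H^1(\cC)$ — and combines its Schatten class $\sS_p$, which reflects the $(n-1)$-dimensional manifold $\cC$, with the boundedness of $\overline{\Gamma(\lambda)}$ and $\Gamma(\bar\lambda)^*$ between the relevant Sobolev scales; the condition $2p+1>n$ then comes from the standard eigenvalue asymptotics of the compact operator on $\cC$. The only structural novelty compared with Section~\ref{ellops} is the systematic replacement of the Neumann boundary condition by the criterion of Lemma~\ref{usefullemma}; apart from that, I expect the main source of possible error to be bookkeeping — keeping track of the two conormal derivatives $\tfrac{\partial f}{\partial\nu}|_\cC$ and $\tfrac{\partial f^\prime}{\partial\nu^\prime}|_\cC$ with their opposite outward normals, so that the boundary terms of Green's identity on $\Omega$ and on $\Omega^\prime$ recombine correctly into the single jump quantity in \eqref{qcoup} — rather than any conceptual difficulty.
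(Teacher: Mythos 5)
Your proposal is correct and follows essentially the same route as the paper's proof: establishing $Q(\bar\mu)\subset Q(\mu)^*$ and the identity \eqref{q} via Green's identity applied on $\Omega$ and $\Omega^\prime$ simultaneously, proving injectivity and the resolvent formula \eqref{resform2} with Lemma~\ref{usefullemma} replacing the Neumann boundary condition, and deferring (iii) to \cite{B62}. The paper's own proof of Theorem~\ref{bigthm2} likewise leaves (iii) to the same reference invoked in Theorem~\ref{bigthm1}, so no further comparison is needed.
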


\begin{proof} 
Let us prove assertion (i). Before the defining relation \eqref{q} for a generalized $Q$-function will be verified we show that 
the operator $Q(\mu)^*$ is an extension of $Q(\bar\mu)$, $\mu\in\rho(A)$. For this let $\psi\in H^{3/2}(\cC)$ and choose
the unique element $f_{\bar\mu}\oplus f^\prime_{\bar\mu}\in\cN_{\bar\mu}(T)$ with the property 
$f_{\bar\mu}|_\cC=f^\prime_{\bar\mu}|_\cC=\psi$. For $\varphi\in H^{3/2}(\cC)$ let $f_{\mu}\oplus f^\prime_{\mu}\in\cN_{\mu}(T)$
be such that $f_{\mu}|_\cC=f^\prime_{\mu}|_\cC=\varphi$ holds. By the definition of $Q$ in \eqref{qcoup} we have
\begin{equation*}
Q(\mu)\varphi=-\frac{\partial f_\mu}{\partial\nu}\Bigl|_\cC-\frac{\partial f^\prime_\mu}{\partial\nu^\prime}\Bigl|_\cC
\quad\text{and}\quad
Q(\bar\mu)\psi=-\frac{\partial f_{\bar\mu}}{\partial\nu}\Bigl|_\cC-\frac{\partial f^\prime_{\bar\mu}}{\partial\nu^\prime}\Bigl|_\cC.
\end{equation*}
This gives
\begin{equation}\label{qadj}
(Q(\mu)\varphi,\psi)=-\left(\frac{\partial f_\mu}{\partial\nu}\Bigl|_\cC,f_{\bar\mu}|_\cC\right)_\cC
-\left(\frac{\partial f^\prime_\mu}{\partial\nu^\prime}\Bigl|_\cC,f^\prime_{\bar\mu}|_\cC\right)_\cC
\end{equation}
and since
\begin{equation*}
 \begin{split}
\left(f_\mu|_\cC,\frac{\partial f_{\bar\mu}}{\partial\nu}\Bigl|_\cC\right)_\cC- 
\left(\frac{\partial f_\mu}{\partial\nu}\Bigl|_\cC,f_{\bar\mu}|_\cC\right)_\cC&=
(\cL_\Omega f_\mu,f_{\bar\mu})_\Omega-(f_\mu,\cL_\Omega f_{\bar\mu})_\Omega=0,\\
\left(f^\prime_\mu|_\cC,\frac{\partial f^\prime_{\bar\mu}}{\partial\nu^\prime}\Bigl|_\cC\right)_\cC-
\left(\frac{\partial f^\prime_\mu}{\partial\nu^\prime}\Bigl|_\cC,f^\prime_{\bar\mu}|_\cC\right)_\cC
&=(\cL_{\Omega^\prime} f^\prime_\mu,f^\prime_{\bar\mu})_{\Omega^\prime}-(f^\prime_\mu,\cL_{\Omega^\prime} f^\prime_{\bar\mu})_{\Omega^\prime}=0
 \end{split}
\end{equation*}
we can rewrite \eqref{qadj} in the form
\begin{equation*}
 (Q(\mu)\varphi,\psi)=-\left(f_\mu|_\cC,\frac{\partial f_{\bar\mu}}{\partial\nu}\Bigl|_\cC\right)_\cC-
\left(f^\prime_\mu|_\cC,\frac{\partial f^\prime_{\bar\mu}}{\partial\nu^\prime}\Bigl|_\cC\right)_\cC
=-\left(\varphi,\frac{\partial f_{\bar\mu}}{\partial\nu}\Bigl|_\cC+\frac{\partial f^\prime_{\bar\mu}}{\partial\nu^\prime}\Bigl|_\cC\right)_\cC.
\end{equation*}
This is true for every $\varphi\in\dom Q(\mu)$ and hence we conclude $\psi\in\dom Q(\mu)^*$ and
\begin{equation*}
Q(\mu)^*\psi=-\frac{\partial f_{\bar\mu}}{\partial\nu}\Bigl|_\cC-\frac{\partial f^\prime_{\bar\mu}}{\partial\nu^\prime}\Bigl|_\cC
=Q(\bar\mu)\psi.
\end{equation*}

Let $\Gamma(\cdot)$ be as in Proposition~\ref{Gammalambda0prop2}. We prove now that 
\begin{equation}\label{qrel2}
Q(\lambda)-Q(\mu)^*=(\lambda-\bar\mu)\Gamma(\mu)^*\Gamma(\lambda),\qquad \lambda,\mu\in\rho(A)
\end{equation}
holds on $\dom\Gamma(\lambda)=H^{3/2}(\cC)$. For this let $\varphi,\psi\in H^{3/2}(\cC)$ and choose the unique elements 
$f_\lambda\oplus f^\prime_\lambda\in\cN_\lambda(T)$, $f_\mu\oplus f^\prime_\mu\in\cN_\mu(T)$ with the properties
\begin{equation}\label{bts}
f_\lambda|_\cC=f^\prime_\lambda|_\cC=\varphi\quad\text{and}\quad f_\mu|_\cC=f^\prime_\mu|_\cC=\psi.
\end{equation}
Then according to Proposition~\ref{Gammalambda0prop2}~(ii) $\Gamma(\lambda)\varphi=f_\lambda\oplus f^\prime_\lambda$ and $\Gamma(\mu)\psi=f_\mu\oplus f^\prime_\mu$ and
by the definition of $Q(\cdot)$ in \eqref{qcoup} we have
\begin{equation*}
Q(\lambda)\varphi=-\frac{\partial f_\lambda}{\partial\nu}\Bigl|_\cC - \frac{\partial f^\prime_\lambda}{\partial\nu^\prime}\Bigl|_\cC
\quad\text{and}\quad
Q(\mu)\psi=-\frac{\partial f_\mu}{\partial\nu}\Bigl|_\cC - \frac{\partial f^\prime_\mu}{\partial\nu^\prime}\Bigl|_\cC.
\end{equation*}
Therefore 
\begin{equation*}
\bigl((Q(\lambda)-Q(\mu)^*)\varphi,\psi\bigr)_\cC=-\left(\frac{\partial f_\lambda}{\partial\nu}\Bigl|_\cC + 
\frac{\partial f^\prime_\lambda}{\partial\nu^\prime}\Bigl|_\cC,\psi \right)_\cC+
\left(\varphi,\frac{\partial f_\mu}{\partial\nu}\Bigl|_\cC + 
\frac{\partial f^\prime_\mu}{\partial\nu^\prime}\Bigl|_\cC \right)_\cC
\end{equation*}
and inserting \eqref{bts} gives
\begin{equation*}
-\left(\frac{\partial f_\lambda}{\partial\nu}\Bigl|_\cC,f_\mu|_\cC\right)_\cC -
\left(\frac{\partial f^\prime_\lambda}{\partial\nu^\prime}\Bigl|_\cC,f^\prime_\mu|_\cC \right)_\cC+
\left(f_\lambda|_\cC,\frac{\partial f_\mu}{\partial\nu}\Bigl|_\cC\right)_\cC + 
\left(f_\lambda^\prime|_\cC,\frac{\partial f^\prime_\mu}{\partial\nu^\prime}\Bigl|_\cC \right)_\cC.
\end{equation*}
Making use of Green's identity the above relations then become
\begin{equation*}
\begin{split}
&\bigl((Q(\lambda)-Q(\mu)^*)\varphi,\psi\bigr)_\cC\\
&\qquad\quad=(\cL_\Omega f_\lambda,f_\mu)_\Omega-(f_\lambda,\cL_\Omega f_\mu)_\Omega 
+(\cL_{\Omega^\prime}f^\prime_\lambda,f^\prime_\mu)_{\Omega^\prime}-(f^\prime_\lambda,\cL_{\Omega^\prime}f^\prime_\mu)_{\Omega^\prime}\\
&\qquad\quad=(\lambda-\bar\mu)\bigl( (f_\lambda,f_\mu)_\Omega+(f^\prime_\lambda,f^\prime_\mu)_{\Omega^\prime} \bigr)
=(\lambda-\bar\mu)\bigl(f_\lambda\oplus f^\prime_\lambda,f_\mu\oplus f^\prime_\mu\bigr)\\
&\qquad\quad=(\lambda-\bar\mu)(\Gamma(\lambda)\varphi,\Gamma(\mu)\psi)=\bigl((\lambda-\bar\mu)\Gamma(\mu)^*\Gamma(\lambda)\varphi,\psi\bigr)_\cC.
\end{split}
\end{equation*}
Since this is true for any $\psi\in H^{3/2}(\cC)$ we conclude that \eqref{qrel2} holds on $H^{3/2}(\cC)$. Thus $Q$ in \eqref{qcoup}
is a generalized $Q$-function for the triple $\{S,A,T\}$.

\vskip 0.3cm\noindent
(ii) We check first that $\ker Q(\lambda)=\{0\}$ holds for $\lambda\in\rho(A)\cap\rho(\widetilde A)$. Assume that
$Q(\lambda)\varphi=0$ for some $\varphi\in H^{3/2}(\cC)$ and let $f_\lambda\oplus f_\lambda^\prime\in\cN_\lambda(T)$
be the unique element with the property $f_\lambda|_\cC=f^\prime_\lambda|_\cC=\varphi$. Then the definition of $Q$ and the assumption
$Q(\lambda)\varphi=0$ imply
\begin{equation*}
\frac{\partial f_\lambda}{\partial\nu}\Bigl|_\cC=-\frac{\partial f^\prime_\lambda}{\partial\nu^\prime}\Bigl|_\cC.
\end{equation*}
According to Lemma~\ref{usefullemma} this yields $f_\lambda\oplus f_\lambda^\prime\in\dom\widetilde A\cap\cN_\lambda(T)$. But as $\lambda\in\rho(\widetilde A)$ we conclude $f_\lambda=0$ and $f^\prime_\lambda=0$, and hence $\varphi=0$.

Now we prove the formula \eqref{resform2} for the difference of the resolvents of $A$ and $\widetilde A$. By the above argument $Q(\lambda)^{-1}$
exists for $\lambda\in\rho(A)\cap\rho(\widetilde A)$. Furthermore, \eqref{tracemaptt} implies $\ran Q(\lambda)=H^{1/2}(\cC)$ and it 
follows from Proposition~\ref{Gammalambda0prop2} that the right hand side in \eqref{resform2} is well defined.

Let $h\in L^2(\dR^n)$ and define the function $k$ as
\begin{equation}\label{k}
k=(A-\lambda)^{-1}h-\Gamma(\lambda)Q(\lambda)^{-1}\Gamma(\bar\lambda)^*h.
\end{equation}
We show $k\in\dom\widetilde A$. First of all it is clear that $k\in\dom T$ since $(A-\lambda)^{-1}h\in\dom A\subset\dom T$ and
$\Gamma(\lambda)$ maps into $\cN_\lambda(T)$. Therefore $k=g\oplus g^\prime$, where $g\in H^2(\Omega)$, $g^\prime\in H^2(\Omega^\prime)$, and 
$g|_\cC=g^\prime|_\cC$. According to Lemma~\ref{usefullemma} for $k\in\dom \widetilde A$ it is sufficient to check
\begin{equation}\label{gbc}
\frac{\partial g}{\partial\nu}\Bigl|_\cC+\frac{\partial g^\prime}{\partial\nu^\prime}\Bigl|_\cC=0.
\end{equation}
We proceed in a similar way as in the proof of Theorem~\ref{bigthm1}. Let $h_A=f_A\oplus f_A^\prime\in\dom A$ be such that
$h=(A-\lambda)h_A$. Making use of Proposition~\ref{Gammalambda0prop2}~(iii) we obtain
\begin{equation}\label{k2}
 k=h_A+\Gamma(\lambda)Q(\lambda)^{-1}\left(\frac{\partial f_A}{\partial\nu}\Bigl|_\cC+\frac{\partial f_A^\prime}{\partial\nu^\prime}\Bigl|_\cC\right)
\end{equation}
from \eqref{k}. Let 
\begin{equation*}
\cN_\lambda(T)\ni f_\lambda\oplus f_\lambda^\prime:=\Gamma(\lambda)Q(\lambda)^{-1}\left(\frac{\partial f_A}{\partial\nu}\Bigl|_\cC+\frac{\partial f_A^\prime}{\partial\nu^\prime}\Bigl|_\cC\right).
\end{equation*}
Then by Proposition~\ref{Gammalambda0prop2}~(ii) we have
\begin{equation*}
f_\lambda|_\cC=f^\prime_\lambda|_\cC=Q(\lambda)^{-1}\left(\frac{\partial f_A}{\partial\nu}\Bigl|_\cC+\frac{\partial f_A^\prime}{\partial\nu^\prime}\Bigl|_\cC\right).
\end{equation*}
This together with the definition of $Q(\lambda)$ in \eqref{qcoup} implies
\begin{equation*}
 \frac{\partial f_A}{\partial\nu}\Bigl|_\cC+\frac{\partial f_A^\prime}{\partial\nu^\prime}\Bigl|_\cC=
Q(\lambda)(f_\lambda|_\cC)=Q(\lambda)(f^\prime_\lambda|_\cC)
= -\frac{\partial f_\lambda}{\partial\nu}\Bigl|_\cC-\frac{\partial f_\lambda^\prime}{\partial\nu^\prime}\Bigl|_\cC.
\end{equation*}
Hence we conclude that the function $k=g\oplus g^\prime$ in \eqref{k2} fulfils \eqref{gbc}, i.e., $k\in\dom\widetilde A$.
From \eqref{k} and $A,\widetilde A\subset T$ we obtain
\begin{equation*}
 (\widetilde A-\lambda)k=(T-\lambda)(A-\lambda)^{-1}h-(T-\lambda)\Gamma(\lambda)Q(\lambda)^{-1}\Gamma(\bar\lambda)^*h=h
\end{equation*}
and now $k=(\widetilde A-\lambda)^{-1}h$ and \eqref{k} imply \eqref{resform2}.
\end{proof}

The following corollaries can be proved in the same way as Corollary~\ref{prop1} and Corollary~\ref{prop2}.

\begin{corollary}\label{prop12}
For $\lambda,\lambda_0\in\rho(A)$ the following holds.
\begin{enumerate}
 \item [{\rm (i)}] $Q(\lambda)$ is a non-closed unbounded operator in $L^2(\cC)$ defined
                   on $H^{3/2}(\cC)$ with $\ran Q(\lambda)\subset H^{1/2}(\cC)$;
                   
 \item [{\rm (ii)}] $Q(\lambda)-\RE Q(\lambda_0)$ is a non-closed bounded operator in $L^2(\cC)$ defined
                   on $H^{3/2}(\cC)$;
 \item [{\rm (iii)}] the closure $\widetilde Q(\lambda)$ of the operator $Q(\lambda)-\RE Q(\lambda_0)$ in 
$L^2(\cC)$ satisfies 
$$\frac{d}{d\lambda}\,\widetilde Q(\lambda)=\Gamma(\bar\lambda)^*\overline{\Gamma(\lambda)}$$
and $\widetilde Q$ is a $\cL(L^2(\cC))$-valued
 Nevanlinna function. 
 \end{enumerate}
\end{corollary}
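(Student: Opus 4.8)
The plan is to repeat the proofs of Corollary~\ref{prop1} and Corollary~\ref{prop2} almost verbatim, with $\Omega$ replaced by the pair $\Omega,\Omega'$ and the trace map \eqref{tracemap} replaced by the surjective map \eqref{tracemaptt}. By Theorem~\ref{bigthm2}(i), $Q$ from \eqref{qcoup} is a generalized $Q$-function of the triple $\{S,A,T\}$ of Theorem~\ref{opscoup}, with $\cG=L^2(\cC)$, $\cG_0=H^{3/2}(\cC)$ and $\Gamma(\cdot)$ as in Proposition~\ref{Gammalambda0prop2}. Hence Proposition~\ref{formq} applies and yields, for $\lambda,\lambda_0\in\rho(A)$,
\[
Q(\lambda)-\RE Q(\lambda_0)=\Gamma_{\lambda_0}^*\bigl((\lambda-\RE\lambda_0)+(\lambda-\lambda_0)(\lambda-\bar\lambda_0)(A-\lambda)^{-1}\bigr)\Gamma_{\lambda_0},
\]
which is a bounded operator on the dense domain $H^{3/2}(\cC)$, since $\Gamma_{\lambda_0}$ is bounded by Proposition~\ref{Gammalambda0prop2}(i) and $(A-\lambda)^{-1}\in\cL(L^2(\dR^n))$. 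This already gives (ii): the displayed formula is the boundedness statement, and non-closedness is automatic because a densely defined bounded operator whose domain $H^{3/2}(\cC)$ is a proper subspace of $L^2(\cC)$ cannot be closed.

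Next I would settle (iii). The operator $\widetilde Q(\lambda)$ is the closure of $Q(\lambda)-\RE Q(\lambda_0)$ in $L^2(\cC)$, which by the previous step is its unique everywhere defined bounded extension, so $\widetilde Q$ is $\cL(L^2(\cC))$-valued and holomorphic on $\dC\setminus\dR$. Extending \eqref{q} as in the proof of Theorem~\ref{qthmgen1} one obtains $\widetilde Q(\lambda)-\widetilde Q(\mu)^*=(\lambda-\bar\mu)\Gamma(\mu)^*\overline{\Gamma(\lambda)}$, whence $\widetilde Q(\bar\lambda)=\widetilde Q(\lambda)^*$ and $\IM\widetilde Q(\lambda)=(\IM\lambda)\,\overline{\Gamma(\lambda)}^*\overline{\Gamma(\lambda)}\ge 0$ for $\IM\lambda>0$, i.e.\ $\widetilde Q$ is a Nevanlinna function. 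Letting $\bar\mu\to\lambda$ in that identity, exactly as in Corollary~\ref{derivcor}, gives $\tfrac{d}{d\lambda}\widetilde Q(\lambda)=\Gamma(\bar\lambda)^*\overline{\Gamma(\lambda)}$; here one uses $\Gamma(\bar\lambda)^*\in\cL(L^2(\dR^n),L^2(\cC))$ from Proposition~\ref{Gammalambda0prop2}(iii) and $\overline{\Gamma(\lambda)}\in\cL(L^2(\cC),L^2(\dR^n))$, so that the identity of Corollary~\ref{derivcor}, valid a priori on $H^{3/2}(\cC)$, extends to all of $L^2(\cC)$.

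It remains to prove (i). The inclusion $\ran Q(\lambda)\subset H^{1/2}(\cC)$ is the remark after \eqref{qcoup}: by definition $Q(\lambda)\varphi=-\tfrac{\partial f_\lambda}{\partial\nu}|_\cC-\tfrac{\partial f'_\lambda}{\partial\nu'}|_\cC$ with $f_\lambda\in H^2(\Omega)$, $f'_\lambda\in H^2(\Omega')$, and the conormal traces lie in $H^{1/2}(\cC)$ by \eqref{tracemap}. For unboundedness and non-closedness I would argue as after Corollary~\ref{prop1}, invoking the coupled counterpart of Corollary~\ref{prop2} (obtained from the classical elliptic estimates of \cite{LM72} and \cite[Theorem~2.1]{G71} together with the resolvent formula \eqref{resform2}): for $\lambda\in\rho(A)\cap\rho(\widetilde A)$ the inverse $Q(\lambda)^{-1}$ is bounded and non-closed on $H^{1/2}(\cC)$ with compact closure $\overline{Q(\lambda)^{-1}}=\widehat Q(\lambda)^{-1}$, where $\widehat Q(\lambda)\colon H^1(\cC)\to L^2(\cC)$ is the isomorphic extension of \eqref{qcoup}; since $H^{3/2}(\cC)$ is dense in $H^1(\cC)$ this forces $\overline{Q(\lambda)}=\widehat Q(\lambda)$, which has domain $H^1(\cC)\supsetneq H^{3/2}(\cC)$ and is unbounded on $L^2(\cC)$, so $Q(\lambda)$ itself is unbounded and not closed. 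This propagates to every $\lambda\in\rho(A)$: by the display above $Q(\lambda)-Q(\lambda')$ has everywhere defined bounded closure for all $\lambda,\lambda'\in\rho(A)$, and adding such an operator to a closed (respectively bounded) operator preserves closedness (respectively boundedness); hence if $Q(\lambda)$ were closed or bounded for some $\lambda\in\rho(A)$, the same would hold at any $\lambda'\in\rho(A)\cap\rho(\widetilde A)$, contradicting the previous step.

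The single genuinely analytic ingredient, and the main point of the argument, is the coupled analogue of Corollary~\ref{prop2}, that is, the elliptic-regularity statement that the coupled Dirichlet-to-Neumann map \eqref{qcoup} is an isomorphism $H^{3/2}(\cC)\to H^{1/2}(\cC)$ which extends to an isomorphism $H^1(\cC)\to L^2(\cC)$. This is exactly what breaks the circular dependence between the unboundedness of $Q(\lambda)$ and that of $\RE Q(\lambda_0)$. Everything else is the formal apparatus of Section~\ref{genq} used verbatim, together with the two elementary observations about perturbing closed or bounded operators by an everywhere defined bounded one.
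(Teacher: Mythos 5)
Your proposal is correct and follows essentially the same route as the paper, whose proof of this corollary consists only of the remark that it ``can be proved in the same way as Corollary~\ref{prop1} and Corollary~\ref{prop2}''; you carry out precisely that transcription, with the coupled trace map \eqref{tracemaptt} and the coupled elliptic isomorphism $H^{3/2}(\cC)\to H^{1/2}(\cC)$ extending to $H^{1}(\cC)\to L^{2}(\cC)$ playing the roles of \eqref{tracemap} and \cite[Theorem~2.1]{G71}. Your explicit propagation argument from $\lambda\in\rho(A)\cap\rho(\widetilde A)$ to all of $\rho(A)$ (via the everywhere defined bounded closure of $Q(\lambda)-Q(\lambda')$) is a detail the paper leaves implicit, but it is consistent with, not a departure from, its approach.
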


\begin{corollary}\label{prop22}
For $\lambda\in\rho(A)\cap\rho(\widetilde A)$ the following holds.
\begin{enumerate}
 \item [{\rm (i)}] $Q(\lambda)^{-1}$ is a non-closed bounded operator in $L^2(\cC)$ defined
                    on $H^{1/2}(\cC)$ with $\ran Q(\lambda)^{-1}=H^{3/2}(\cC)$;
 \item [{\rm (ii)}] the closure $\overline{Q(\lambda)^{-1}}$ is a compact operator in $L^2(\cC)$;
 \item [{\rm (iii)}] the function $\lambda\mapsto -\overline{Q(\lambda)^{-1}}$ is a $\cL(L^2(\cC))$-valued
 Nevanlinna function.
 \end{enumerate}
\end{corollary}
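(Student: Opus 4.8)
The plan is to repeat, \emph{mutatis mutandis}, the proof of Corollary~\ref{prop2}, with $L^2(\partial\Omega)$ replaced by $L^2(\cC)$ and the Dirichlet-to-Neumann map of Definition~\ref{dirneu} replaced by the transmission-type operator $Q(\lambda)$ of \eqref{qcoup}. As in Corollary~\ref{prop2}, assertion (i) is an immediate consequence of (ii): once $\overline{Q(\lambda)^{-1}}$ is known to be a bounded everywhere defined (indeed compact) operator in $L^2(\cC)$, it follows that $Q(\lambda)^{-1}$ is its proper restriction to the dense subspace $H^{1/2}(\cC)$, hence non-closed, and $\ran Q(\lambda)^{-1}=H^{3/2}(\cC)$ has already been recorded in the proof of Theorem~\ref{bigthm2}~(ii). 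Assertion (iii) follows, again exactly as in Corollary~\ref{prop2}~(iii), from Theorem~\ref{qthmgen1} and standard properties of the Nevanlinna class: since $Q$ is a generalized $Q$-function one has $\IM Q(\lambda)=(\IM\lambda)\,\Gamma(\lambda)^*\Gamma(\lambda)\ge 0$ on $H^{3/2}(\cC)$ for $\IM\lambda>0$, and $Q(\bar\lambda)\subset Q(\lambda)^*$ (proved in Theorem~\ref{bigthm2}) gives $\overline{Q(\bar\lambda)^{-1}}=\big(\overline{Q(\lambda)^{-1}}\big)^*$, so that $\IM\big(-\overline{Q(\lambda)^{-1}}\big)=\big(\overline{Q(\lambda)^{-1}}\big)^*\big(\IM\overline{Q(\lambda)}\big)\overline{Q(\lambda)^{-1}}\ge 0$ and $\lambda\mapsto-\overline{Q(\lambda)^{-1}}$ is a holomorphic $\cL(L^2(\cC))$-valued Nevanlinna function.

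The substance is assertion (ii). First I would record the decomposition $Q(\lambda)=Q_\Omega(\lambda)+Q_{\Omega^\prime}(\lambda)$, valid for $\lambda\in\rho(A)=\rho(A_\Omega)\cap\rho(A_{\Omega^\prime})$, where $Q_\Omega(\lambda)$, $Q_{\Omega^\prime}(\lambda)$ are the Dirichlet-to-Neumann maps of $\cL_\Omega$ on the bounded domain $\Omega$ and of $\cL_{\Omega^\prime}$ on the unbounded domain $\Omega^\prime$ in the sense of Definition~\ref{dirneu}: if $f_\lambda\oplus f_\lambda^\prime\in\cN_\lambda(T)$ with $f_\lambda|_\cC=f_\lambda^\prime|_\cC=\varphi$, then $f_\lambda$, $f_\lambda^\prime$ are precisely the $H^2$-solutions defining the two scalar maps. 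By the classical elliptic regularity theory for the Dirichlet problem (\cite{LM72}, \cite[Theorem~2.1]{G71}) already used in the proof of Corollary~\ref{prop2}, each of $Q_\Omega(\lambda)$, $Q_{\Omega^\prime}(\lambda)$ is the restriction to $H^{3/2}(\cC)$ of a classical elliptic pseudodifferential operator of order one on $\cC$ with positive principal symbol, bounded as a map $H^1(\cC)\to L^2(\cC)$; hence $\widehat Q(\lambda):=\widehat Q_\Omega(\lambda)+\widehat Q_{\Omega^\prime}(\lambda)\in\cL\big(H^1(\cC),L^2(\cC)\big)$ is again elliptic of order one, extends $Q(\lambda)$, acts as in \eqref{qcoup}, and one checks that $\widehat Q(\lambda)$ coincides with the $L^2(\cC)$-closure of $Q(\lambda)$. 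Consequently $\widehat Q(\lambda):H^1(\cC)\to L^2(\cC)$ is Fredholm, $\ker\widehat Q(\lambda)\subset C^\infty(\cC)$ is independent of the Sobolev scale, and $\widehat Q(\lambda)$ restricts to a bounded map $H^{3/2}(\cC)\to H^{1/2}(\cC)$ whose inverse, once it exists, is bounded by elliptic regularity. For $\lambda\in\rho(A)\cap\rho(\widetilde A)$ we already know from Theorem~\ref{bigthm2}~(ii) (via Lemma~\ref{usefullemma}) that $\ker Q(\lambda)=\{0\}$, hence $\ker\widehat Q(\lambda)=\{0\}$; to pass to surjectivity it suffices to see that the Fredholm index of $\widehat Q(\lambda)$ is zero. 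Since $A$ and $\widetilde A$ are bounded from below, $\rho(A)\cap\rho(\widetilde A)$ is a connected open set containing a real half-line $(-\infty,-M)$, along which $\widehat Q(\lambda)$ is a holomorphic family of Fredholm operators, so the index is constant; at a real point $\lambda=-M-1$ the operator $Q(\lambda)$ is symmetric (cf.\ the discussion after Definition~\ref{defq}) and injective, its closure $\widehat Q(\lambda)$ is therefore a selfadjoint elliptic operator of order one with compact resolvent, hence boundedly invertible and of index $0$. Thus $\widehat Q(\lambda):H^1(\cC)\to L^2(\cC)$ and $Q(\lambda):H^{3/2}(\cC)\to H^{1/2}(\cC)$ are isomorphisms for every $\lambda\in\rho(A)\cap\rho(\widetilde A)$, and from here the argument is word for word that of Corollary~\ref{prop2}~(ii): $Q(\lambda)^{-1}\subset\widehat Q(\lambda)^{-1}$ is densely defined in $L^2(\cC)$ and bounded into $H^1(\cC)$, its closure $\overline{Q(\lambda)^{-1}}$ is everywhere defined, bounded, takes values in $H^1(\cC)$ and equals $\widehat Q(\lambda)^{-1}$, and the compact embedding $H^1(\cC)\hookrightarrow L^2(\cC)$ forces $\overline{Q(\lambda)^{-1}}$ to be compact.

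The main obstacle is precisely this isomorphism statement for the transmission-type operator $Q(\lambda)$ on the full Sobolev scale: the ellipticity of $\widehat Q_\Omega(\lambda)$, $\widehat Q_{\Omega^\prime}(\lambda)$ and of their sum, together with the $H^1(\cC)\to L^2(\cC)$ mapping property, is classical, but one still has to convert the injectivity on $H^{3/2}(\cC)$ coming from Theorem~\ref{bigthm2}~(ii) into invertibility, which is where the Fredholm-index bookkeeping and the existence of a common real point below the spectra of $A$ and $\widetilde A$ enter. Everything else, including the Nevanlinna statement (iii), runs parallel to Corollary~\ref{prop1} and Corollary~\ref{prop2} in Section~\ref{ellops}.
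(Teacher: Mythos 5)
Your proposal is correct, and for parts (i) and (iii) it is essentially the paper's argument (the paper disposes of this corollary with the single remark that it ``can be proved in the same way as Corollary~\ref{prop1} and Corollary~\ref{prop2}''). Where you genuinely diverge is in the key analytic input for (ii): the paper, in the model proof of Corollary~\ref{prop2}, simply \emph{cites} \cite{LM72} and \cite[Theorem~2.1]{G71} for the fact that the Dirichlet-to-Neumann map extends to an isomorphism $H^1(\partial\Omega)\to L^2(\partial\Omega)$, and leaves the reader to believe the analogous statement for the transmission operator \eqref{qcoup}; you instead \emph{construct} the isomorphism, via the decomposition $Q(\lambda)=Q_\Omega(\lambda)+Q_{\Omega'}(\lambda)$ into the interior and exterior Dirichlet-to-Neumann maps, the observation that their principal symbols have the same sign (so the sum is again an elliptic first-order pseudodifferential operator on $\cC$ --- this is exactly the point where a naive ``same as before'' could fail, since a difference of the two maps would not be elliptic), and a Fredholm-index argument converting the injectivity from Theorem~\ref{bigthm2}~(ii) into invertibility. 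This buys a self-contained justification of the one step the paper does not actually cover by its references. Two small simplifications are available: the connectivity of $\rho(A)\cap\rho(\widetilde A)$ and the holomorphic deformation are not needed, because $\widehat Q(\lambda)-\widehat Q(\mu)$ is bounded on $L^2(\cC)$ (Proposition~\ref{formq}) and hence compact as a map $H^1(\cC)\to L^2(\cC)$, so the index is the same for all $\lambda$; and at a real point below the spectra the index is $0$ simply because a formally symmetric elliptic pseudodifferential operator of order one on a closed manifold satisfies $\dim\ker \widehat Q=\dim\ker\widehat Q^{\,*}$, without any appeal to compact resolvents. The remaining minor points (semiboundedness of $A$ and $\widetilde A$ from uniform ellipticity and bounded coefficients; the cosmetic sign of the principal symbol caused by the minus sign in \eqref{qcoup}) do not affect the argument.
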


As a corollary of Theorem~\ref{bigthm2} we obtain a trace formula for the difference of the resolvents of $A$
and $\widetilde A$.

\begin{corollary}
Let the assumptions be as in Theorem~\ref{bigthm2}, let $\widetilde Q$ be the Nevanlinna function
from Corollary~\ref{prop12} and suppose, in addition, $n=2$. Then 
\begin{equation*}
\tr\bigl((A-\lambda)^{-1}-(\widetilde A-\lambda)^{-1}\bigr)=\tr\left(\overline{Q(\lambda)^{-1}}
\frac{d}{d\lambda}\,\widetilde Q(\lambda)\right)
\end{equation*}
holds for all $\lambda\in\rho(A)\cap\rho(\widetilde A)$. 
\end{corollary}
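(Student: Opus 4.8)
The plan is to mimic the proof of the trace formula \eqref{traceresform} for the Dirichlet and Neumann operators and to reduce everything to the resolvent formula \eqref{resform2} of Theorem~\ref{bigthm2} together with the regularity statements in Corollaries~\ref{prop12} and~\ref{prop22}.

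First I would rewrite \eqref{resform2} in terms of closures. By Proposition~\ref{Gammalambda0prop2}~(i) the operator $\overline{\Gamma(\lambda)}$ is a bounded everywhere defined operator from $L^2(\cC)$ to $L^2(\dR^n)$, by Proposition~\ref{Gammalambda0prop2}~(iii) the operator $\Gamma(\bar\lambda)^*$ is bounded and everywhere defined on $L^2(\dR^n)$, and by Corollary~\ref{prop22}~(ii) the operator $\overline{Q(\lambda)^{-1}}$ is a bounded everywhere defined (in fact compact) operator in $L^2(\cC)$. Hence \eqref{resform2} extends to
\begin{equation*}
(A-\lambda)^{-1}-(\widetilde A-\lambda)^{-1}=\overline{\Gamma(\lambda)}\,\overline{Q(\lambda)^{-1}}\,\Gamma(\bar\lambda)^*,\qquad\lambda\in\rho(A)\cap\rho(\widetilde A).
\end{equation*}
For $n=2$ the choice $p=1$ satisfies $2p+1>n$, so Theorem~\ref{bigthm2}~(iii) shows that the left hand side, and hence the product on the right hand side, belongs to the trace class $\sS_1(L^2(\dR^n))$.

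Next I would apply the standard commutation property of the trace (see \cite{GK69}): if $B_1 B_2$ is a trace class operator and $B_1,B_2$ are bounded, then $B_2 B_1$ is trace class and $\tr(B_1 B_2)=\tr(B_2 B_1)$. Taking $B_1=\overline{\Gamma(\lambda)}$ and $B_2=\overline{Q(\lambda)^{-1}}\,\Gamma(\bar\lambda)^*$ this gives
\begin{equation*}
\tr\bigl((A-\lambda)^{-1}-(\widetilde A-\lambda)^{-1}\bigr)=\tr\bigl(\overline{Q(\lambda)^{-1}}\,\Gamma(\bar\lambda)^*\,\overline{\Gamma(\lambda)}\bigr).
\end{equation*}
Finally, by Corollary~\ref{prop12}~(iii) we have $\tfrac{d}{d\lambda}\widetilde Q(\lambda)=\Gamma(\bar\lambda)^*\overline{\Gamma(\lambda)}$, and substituting this identity into the last display yields the assertion.

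I do not expect any serious obstacle here; the only points requiring a little care are the justification that the factors in the extended resolvent formula are everywhere defined bounded operators (so that the closures may be inserted and the trace rearranged) and the fact that $\Gamma(\bar\lambda)^*=\overline{\Gamma(\bar\lambda)}^{\,*}$, which allows one to identify the product $\Gamma(\bar\lambda)^*\overline{\Gamma(\lambda)}$ with the derivative of $\widetilde Q$. Both are already contained in the results quoted above, so the proof is a direct transcription of the argument following \eqref{resform234}.
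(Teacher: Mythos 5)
Your argument is correct and coincides with the paper's own proof (the paper simply refers back to the proof of the analogous corollary after Theorem~\ref{bigthm1}, which is exactly the argument you give): rewrite \eqref{resform2} with the everywhere defined bounded closures, invoke Theorem~\ref{bigthm2}~(iii) with $p=1$ for the trace class property when $n=2$, commute the factors under the trace, and identify $\Gamma(\bar\lambda)^*\overline{\Gamma(\lambda)}$ with $\tfrac{d}{d\lambda}\widetilde Q(\lambda)$ via Corollary~\ref{prop12}~(iii). No gaps.
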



\begin{thebibliography}{99}

\bibitem{ABDS90}
D.~Alpay, P.~Bruinsma, A.~Dijksma, and H.S.V. de~Snoo, {\it A Hilbert
space associated with a Nevanlinna function}, Proceedings
International Symposium MTNS 89, Volume III, Progress in Systems and
Control Theory, Birkh\"auser Verlag Basel (1990), 115--122.

\bibitem{AG03}
D.~Alpay and I.~Gohberg, {\it A trace formula for canonical differential expressions}, J. Funct. Anal. 197 (2003), no. 2, 489--525.

\bibitem{AG05}
D.~Alpay and I.~Gohberg, {\it Pairs of selfadjoint operators and their invariants},  Algebra i Analiz  16  (2004),  no. 1, 70--120;  translation in  St. Petersburg Math. J.  16  (2005),  no. 1, 59--104. 

\bibitem{AP04} W.\,O.~Amrein, D.\,B.~Pearson,
$M$-operators: a generalisation of Weyl--Titchmarsh theory,
J.\ Comput.\ Appl.\ Math.\ 171 (2004), 1--26.

\bibitem{BF62} W.\,G.~Bade, R.\,S.~Freeman,
Closed extensions of the Laplace operator determined by a general class
of boundary conditions,
Pacific J.\ Math.\ \textbf{12} (1962), 395--410.

\bibitem{B65} R.~Beals,
Non-local boundary value problems for elliptic operators,
Amer.\ J.\ Math.\ \textbf{87} (1965), 315--362.

\bibitem{BHS08} J.~Behrndt, S.~Hassi and H.S.V.~de~Snoo, \textit{Boundary relations, unitary colligations, and functional models}, to appear
in Complex Analysis Operator Theory.

\bibitem{BKSZ08} J.~Behrndt, M.~Kurula, A.~van~der~Schaft, and H.~Zwart, {\it Dirac structures and their composition on Hilbert spaces}, preprint (2008).

\bibitem{BL07} J.~Behrndt and M.~Langer, {\it Boundary value problems for elliptic partial differential operators
on bounded domains}, J. Funct. Anal. 243 (2007), 536--565.

\bibitem{BMN08} J.~Behrndt, M.M.~Malamud, and H.~Neidhardt, \textit{Trace formulae for dissipative and coupled scattering systems}, to appear
in Oper. Theory Adv. Appl., Birkh\"{a}user, Basel

\bibitem{B62} M.S~Birman, {\it Perturbations of the continuous spectrum of a singular elliptic differential 
operator by varying the boundary and the boundary conditions}, Vestnik Leningrad. Univ. 17 (1962), 22--55.

\bibitem{B78}
M.S.~Brodski\u{\i}, {\it Unitary operator colligations and their
characteristic functions}, Uspekhi Math. Nauk, 33 (4) (1978),
141--168 (Russian) [English transl.: Russian Math. Surveys, 33 no. 4
(1978), 159--191].

\bibitem{BGW09} M.~Brown, G.~Grubb, and I.~Wood, {\it M-functions for closed extensions of adjoint pairs of operators with 
applications to elliptic boundary problems}, arXiv:0803.3630

\bibitem{BMNW08} M.~Brown, M.~Marletta, S.~Naboko, and I.~Wood, {\it Boundary triplets and $M$-functions 
for non-selfadjoint operators, with applications to elliptic PDEs and block operator matrices}, preprint (2008).

\bibitem{BDS93}
P.~Bruinsma, A.~Dijksma, and H.S.V.~de~Snoo, \textit{Models for
generalized Carath\'eodory and Nevanlinna functions}, Koninklijke Nederlandse Akademie van Wetenschappen,
Verhandelingen, Afd. Natuurkunde, Eerste Reeks, deel 40, Amsterdam (1993), 161--178.

\bibitem{BGP08}
J.~Br\"{u}ning, V.~Geyler, and K.~Pankrashkin, \textit{Spectra of self-adjoint extensions and 
applications to solvable Schr\"{o}dinger operators}, Rev. Math. Phys. 20 (2008), 1--70. 

\bibitem{DHMS06} V.A.~Derkach, S.~Hassi, M.M.~Malamud, and H.S.V.~de~Snoo, 
{\it Boundary relations and their Weyl families}, Trans. Amer. Math. Soc.  358  (2006), 5351--5400.

\bibitem{DM91} V.A.~Derkach and M.M.~Malamud, {\it Generalized resolvents and the boundary 
value problems for Hermitian operators with gaps}, J. Funct. Anal. 95 (1991), 1--95.

\bibitem{DM95} V.A.~Derkach and M.M.~Malamud, {\it The extension theory of Hermitian operators 
and the moment problem}, J. Math. Sciences 73  (1995), 141--242.

\bibitem{EK04} P.~Exner and S.~Kondej, {\it Schr\"{o}dinger operators with singular interactions: a model of tunnelling resonances}, 
J. Phys. A  37 (2004) no. 34, 8255--8277.

\bibitem{EK04-2} P.~Exner and S.~Kondej, {\it Strong-coupling asymptotic expansion for Schr\"{o}dinger operators with a singular interaction 
supported by a curve in $\dR\sp 3$},  Rev. Math. Phys.  16  (2004),  no. 5, 559--582.

\bibitem{GLMZ05} F.~Gesztesy, Y.~Latushkin, M.~Mitrea, and M.~Zinchenko, {\it Nonselfadjoint operators, 
infinite determinants, and some applications}, Russ. J. Math. Phys. 12 (2005), 443--471.

\bibitem{GMZ07} F.~Gesztesy, M.~Mitrea, and M.~Zinchenko, {\it Multi-dimensional versions of a determinant 
formula due to Jost and Pais}, Rep. Math. Phys. 59 (2007), 365--377.

\bibitem{GMZ07-2} F.~Gesztesy, M.~Mitrea, and M.~Zinchenko, {\it Variations on a theme of Jost and Pais}, J. Funct. Anal. 253 (2007), 399--448.

\bibitem{GM08} F.~Gesztesy and M.~Mitrea, 
{\it Generalized Robin boundary conditions, Robin-to-Dirichlet maps,
and Krein-type resolvent formulas for Schr\"{o}dinger operators on bounded Lipschitz domains}, arXiv:0803.3179

\bibitem{GM08-2} F.~Gesztesy and M.~Mitrea, {\it Robin-to-Robin maps and Krein-type resolvent for-\\ mulas for Schr\"{o}dinger operators on
bounded Lipschitz domains}, arXiv:0803.3072

\bibitem{GT00} F.~Gesztesy and E.~Tsekanovskii, \textit{On matrix-valued Herglotz functions}, 
Math. Nachr. 218 (2000), 61--138.

\bibitem{GK69} I.~Gohberg and M.G.~Kre\u\i n, \textit{Introduction to the theory of linear nonselfadjoint operators}, 
 Translations of Mathematical Monographs Vol. 18, American Mathematical Society, Providence, R.I., 1969.

\bibitem{GG91} V.I.~Gorbachuk and M.L.~Gorbachuk, {\it Boundary value problems for operator 
differential equations}, Mathematics and its
Applications (Soviet Series) 48,  Kluwer Academic Publishers, Dordrecht, 1991.


\bibitem{G85} P.~Grisvard, {\it Elliptic problems in nonsmooth domains}, 
Monographs and Studies in Mathematics 24, Pitman, Boston, MA, 1985.

\bibitem{G68} G.~Grubb, {\it A characterization of the non-local boundary value problems associated 
with an elliptic operator}, Ann.\ Scuola Norm.\ Sup.\ Pisa 22 (1968), 425--513.

\bibitem{G71} G.~Grubb, {\it On the coerciveness and semiboundedness of general boundary problems},
Israel J. Math. 10 (1971), 32--95.

\bibitem{HSW98} S.~Hassi, H.S.V.~de~Snoo, and H.~Woracek, {\it Some interpolation problems of 
Nevanlinna-Pick type. The Krein-Langer method},  Oper. Theory Adv. Appl. 106, Birkh\"{a}user, Basel (1998), 201--216.

\bibitem{K76} T. Kato, {\it Perturbation Theory for Linear Operators}, Grundlehren der Mathematischen Wissenschaften 132, Springer-Verlag, Berlin-New York, 1976.

\bibitem{K47} M.G.~Krein, \textit{The theory of self-adjoint extensions of semi-bounded Hermitian transformations 
and its applications. I.}, 
(Russian) Rec. Math. [Mat. Sbornik] N.S. 20 (62) (1947), 431--495.

\bibitem{K49} M.G.~Krein, \textit{The fundamental propositions of the theory of representations of 
Hermitian operators with deficiency index $(m,m)$}, (Russian) Ukrain. Mat. \v Zurnal  1, (1949),  3--66.

\bibitem{KL73} M.G.~Krein and H.~Langer, {\it \"{U}ber die $Q$-Funktion eines $\pi $-hermiteschen Operators 
im Raume $\Pi \sb{\kappa }$}, Acta Sci. Math. (Szeged)  34  (1973), 191--230.

\bibitem{KL77} M.G.~Krein and H.~Langer, {\it \"{U}ber einige Fortsetzungsprobleme, die eng mit der Theorie hermitescher Operatoren 
im Raume $\Pi_{\kappa }$ zusammenh\"{a}ngen. I. Einige Funktionenklassen und ihre Darstellungen},  Math. Nachr. 77 (1977), 187--236.

\bibitem{LT77} H.~Langer and B.~Textorius, {\it On generalized resolvents and $Q$-functions of symmetric 
linear relations (subspaces) in Hilbert space}, Pacific J. Math.  72 (1977), 135--165.

\bibitem{LM72} J.~Lions and E.~Magenes, {\it Non-Homogeneous Boundary Value Problems and Applications I}, 
Springer Verlag, New York - Heidelberg, 1972.

\bibitem{MM03}
M.M.~Malamud and S.M.~Malamud, {\it Spectral theory of operator
measures in  Hilbert space}, St.~-Petersburg Math. Journal, 15, no.
3 (2003), 1-77.

\bibitem{M04} M.~Marletta, \textit{Eigenvalue problems on exterior domains and Dirichlet to Neumann maps},  
J. Comput. Appl. Math. 171 (2004), 367--391.

\bibitem{M} V.~Mazya, {\it Sobolev spaces}, Springer, Berlin, 1985.

\bibitem{MPP07} A.B.~Mikhailova, B.~Pavlov, and L.V.~Prokhorov, {\it Intermediate Hamiltonian via Glazman's splitting
and analytic perturbation for meromorphic matrix-functions}, Math. Nachr. 280, No. 12 (2007), 1376--1416.

\bibitem{MPR07} A.B.~Mikhailova, B.~Pavlov, and V.I.~Ryzhii, \textit{Dirichlet-to-Neumann techniques for 
the plasma-waves in a slot-diode}, Oper. Theory Adv. Appl. 174, Birkh\"{a}user, Basel (2007) 77--103. 

\bibitem{P07} B.~Pavlov, \textit{A star-graph model via operator extension}, Math. Proc. Cambridge Philos. 
Soc. 142 (2007), 365--384.

\bibitem{P08} A.~Posilicano, {\it Self-adjoint extensions of restrictions}, math-ph/0703078

\bibitem{PR09} A.~Posilicano and L.~Raimondi, {\it Krein's resolvent formula for self-adjoint extensions of symmetric second order elliptic differential operators}, arXiv:0804.3312

\bibitem{Post07} O.~Post, {\it First-order operators and boundary triples},  Russ. J. Math. Phys. 14  (2007), no. 4, 482--492.

\bibitem{V52} M.\,I.~Vi\v{s}ik, {\it On general boundary problems for elliptic differential equations}, Trudy Moskov.\ Mat.\ Obsc.\  1 (1952), 187--246 (russisch).

\bibitem{W} J.~Wloka, {\it Partial differential equations}, Cambridge University Press, Cambridge, 1987.

\end{thebibliography}
\end{document}